\documentclass[11pt]{amsart}

\usepackage[
paper=a4paper,
text={147mm,230mm},centering
]{geometry}

\iftrue
\makeatletter
\def\@settitle{%
  \vspace*{-20pt}
  \begin{flushleft}%
    \baselineskip14\p@\relax
    \normalfont\bfseries\LARGE
    \@title
  \end{flushleft}%
}
\def\@setauthors{%
  \begingroup
  \def\thanks{\protect\thanks@warning}%
  \trivlist
  \large \@topsep30\p@\relax
  \advance\@topsep by -\baselineskip
  \item\relax
  \author@andify\authors
  \def\\{\protect\linebreak}%
  \authors
  \ifx\@empty\contribs
  \else
    ,\penalty-3 \space \@setcontribs
    \@closetoccontribs
  \fi
  \normalfont
  \endtrivlist
  \endgroup
}
\def\@setabstracta{%
    \ifvoid\abstractbox
  \else
    \skip@25\p@ \advance\skip@-\lastskip
    \advance\skip@-\baselineskip \vskip\skip@
    \box\abstractbox
    \prevdepth\z@ 
    \vskip-10pt
  \fi
}
\renewenvironment{abstract}{%
  \ifx\maketitle\relax
    \ClassWarning{\@classname}{Abstract should precede
      \protect\maketitle\space in AMS document classes; reported}%
  \fi
  \global\setbox\abstractbox=\vtop \bgroup
    \normalfont\small
    \list{}{\labelwidth\z@
      \leftmargin0pc \rightmargin\leftmargin
      \listparindent\normalparindent \itemindent\z@
      \parsep\z@ \@plus\p@
      
    }%
    \item[\hskip\labelsep\bfseries\abstractname.]%
}{%
  \endlist\egroup
  \ifx\@setabstract\relax \@setabstracta \fi
}
\def\section{\@startsection{section}{1}%
  \z@{-1.2\linespacing\@plus-.5\linespacing}{.8\linespacing}%
  {\normalfont\bfseries\large}}
\def\subsection{\@startsection{subsection}{2}%
  \z@{-.8\linespacing\@plus-.3\linespacing}{.3\linespacing\@plus.2\linespacing}%
  {\normalfont\bfseries}}
\def\subsubsection{\@startsection{subsubsection}{3}%
  \z@{.7\linespacing\@plus.1\linespacing}{-1.5ex}%
  {\normalfont\itshape}}
\def\@secnumfont{\bfseries}
\makeatother
\fi 

\usepackage{amssymb}
\usepackage{mathrsfs}
\usepackage[all]{xy}
\usepackage{pinlabel}
\usepackage[textwidth=1.3in,color=yellow]{todonotes}
\usepackage{amsmath, mathtools}
\usepackage{enumitem}
\usepackage{pb-diagram,pb-xy}
\usetikzlibrary{calc}
\usepackage{youngtab}
\usepackage[boxsize=.3 em]{ytableau}

\usepackage{url}
\usepackage{caption}
\usepackage{subcaption}
\usepackage{fancybox}
\usepackage{wrapfig}
\usepackage{color}
\usepackage{multirow, url}
\usepackage{tikz}
\usetikzlibrary{shapes}
\usepackage{xcolor}

\textwidth=\paperwidth \advance\textwidth by-2\oddsidemargin
\advance\oddsidemargin by-1in
\evensidemargin=\oddsidemargin
\calclayout

\theoremstyle{plain}
\newtheorem{theorem}{Theorem}[section]
 
\newtheorem{thmx}{Theorem}

\newtheorem*{setupx}{Setup}

\newtheorem{proposition}[theorem]{Proposition}
\newtheorem{lemma}[theorem]{Lemma}
\newtheorem{corollary}[theorem]{Corollary}

\theoremstyle{definition}

\newtheorem{definition}[theorem]{Definition}

\newtheorem{example}[theorem]{Example}
\newtheorem{assumption}[theorem]{Assumption}

\theoremstyle{remark}
\newtheorem{remark}[theorem]{Remark}

\newcommand{\C}{\mathbb{C}}

\newcommand{\Q}{\mathbb{Q}}

\newcommand{\R}{\mathbb{R}}

\newcommand{\Z}{\mathbb{Z}}

\newcommand{\CP}{\mathbb{C}P}

\newcommand{\git}{\mathbin{/\mkern-6mu/}}

\def\mcal{\mathcal}
\def\frak{\mathfrak}

\numberwithin{equation}{section} \numberwithin{table}{section}

\def\to{\mathchoice{\longrightarrow}{\rightarrow}{\rightarrow}{\rightarrow}}
\makeatletter
\newcommand{\shortxra}[2][]{\ext@arrow 0359\rightarrowfill@{#1}{#2}}
\def\longrightarrowfill@{\arrowfill@\relbar\relbar\longrightarrow}
\newcommand{\longxra}[2][]{\ext@arrow 0359\longrightarrowfill@{#1}{#2}}

\makeatother
\numberwithin{equation}{section}

\begin{document}                                                                          
\title[Holomorphic disks and GIT quotients]{Holomorphic disks and GIT quotients}

\author{Yoosik Kim}
\address{Department of Mathematics and Institute of Mathematical Science, Pusan National University, Busan, Republic of Korea}
\email{yoosik@pusan.ac.kr}
\thanks{The research of Y.  \ Kim was supported by the National Research Foundation of Korea(NRF) grant funded by the Korea government (MSIT) (RS-2025-16069532 and RS-2020-NR049535).}

\begin{abstract}
Let $G$ be a connected compact Lie group and let $\mathbb{G}$ be its complexification. In this paper, we establish a correspondence between the moduli spaces of holomorphic disks bounded by a $G$-invariant Lagrangian submanifold $L \subseteq X$ and those bounded by its quotient $L/G$ in the GIT quotient $X \git \mathbb{G}$. Under suitable positivity and topological assumptions, we derive a computationally effective formula for the disk potential of $L/G$ from that of $L$ via the {semistable disk potential}, which reflects the choice of a level set of a value of the moment map.
\end{abstract}


\maketitle
\setcounter{tocdepth}{1} 
\tableofcontents

\section{Introduction}
\label{secIntroduction}

Let $G$ be a connected compact Lie group and consider a Hamiltonian $G$-action on a symplectic manifold $(X,\omega)$. Meyer and Marsden--Weinstein \cite{MW74, Mey73} introduced the construction of the symplectic quotient $X \git G$ from the $G$-action, known as symplectic reduction. A particularly influential approach originates from Kirwan’s work \cite{Kir84}, which connects the $G$-equivariant cohomology of $X$ to the ordinary cohomology of the quotient. This perspective has led to a broad program comparing $G$-equivariant invariants of $X$ with invariants of $X \git G$. 

In the context of Lagrangian Floer theory and mirror symmetry, substantial effort has been devoted to the construction of equivariant invariants and to understanding with invariants of its quotient, see \cite{DF18, WX18, HLS20, Caz24, Chr25, FS23, LLL23, LS23, Xia23, KLZ24, PT24, CHLL25} for instance. Especially, Teleman and Fukaya \cite{Tel14, Fuk21} proposed the construction of equivariant Floer theory and equivariant Fukaya categories in general setting and formulated the corresponding mirror symmetry. This has led to an active program aimed at relating the Floer theory of $X$ to that of the quotient $X \git G$.

In this paper, \emph{without} using equivariant Floer theory, we investigate the relationship between the symplectic geometry of a smooth algebraic variety $X$ equipped with a Hamiltonian $G$-action and that of its reduction. Instead, our approach is based on geometric invariant theory (GIT) and the Kempf--Ness theorem, which establishes a bridge between GIT quotients and symplectic reductions. Relying on this correspondence, the main purpose of this paper is to compare holomorphic disks in $X$ with boundary on $L$ and those in the quotient $X \git \mathbb{G}$ with those boundary on $L / G$.

A main difficulty in symplectic reduction is that the construction requires restricting to a level set of the moment map and discards a substantial portion of the ambient space $X$. As a result, it is not straightforward to directly compare holomorphic curves before and after reduction. By contrast, when the GIT quotient is geometric, there exists a holomorphic quotient map 
$$
q \colon X^{ss} \to X \git {G}
$$
defined on the semistable locus, whose image is the quotient. Provided that $X^{ss}$ is nonempty, it contains an open dense subset of $X$ and enables a direct comparison between holomorphic disks in $X$ and those in the quotient via the map $q$.

We describe the setup more precisely. Let $\mathbb{G}$ be the complexification of $G$ and suppose that $\mathbb{G}$ acts linearly on a $G$-invariant projective variety $X \subseteq \CP^N$. The induced $G$-action is Hamiltonian with a moment map $\mu_G$. Assume that $G$ acts freely on the level set $\mu_G^{-1}(0)$. By the Kempf--Ness theorem \cite{KN78}, the GIT quotient $X \git \mathbb{G}$ and the symplectic reduction $X \git G = \mu_G^{-1}(0) / G$ are isomorphic. Moreover, the symplectic and complex structure are compatible via the inclusion $\iota \colon \mu_G^{-1}(0) \to X^{ss}$ as shown in \cite{GS82}. Let $L \subseteq \mu_G^{-1}(0)$ be a $G$-invariant Lagrangian submanifold. 

The quotient map $q$ gives rise to the following geometric framework. Suppose that we have a holomorphic principal $\mathbb{G}$-bundle
$$
q \colon P_\mathbb{G} \coloneqq X^{ss}\to M \coloneqq X \git \mathbb{G}.
$$ 
between K\"{a}hler manifolds $P_\mathbb{G}$ and $M$. We denote the corresponding K\"{a}hler manifolds by $(P_{\mathbb{G}}, \omega, J)$ and $(M, \omega_M, J_M)$. Set $P_G \coloneqq \mu^{-1}_G (0)$ and we assume the following setup.

\begin{setupx}
Suppose that $P_\mathbb{G}$ contains a submanifold $P_G $  such that
\begin{enumerate}
\item the restriction $q |_{P_G} \colon P_G \to M$ is a principal $G$-bundle,
\item the principal $\mathbb{G}$-bundle $P_\mathbb{G}$ is isomorphic to the extension of $P_G$ to $\mathbb{G}$, i.e., $P_\mathbb{G} \simeq P_G \times_G \mathbb{G}$,
\item the K\"{a}hler forms satisfy $q^* \omega_{M} = \omega |_{P_G}$. 
\end{enumerate}
\end{setupx}

For a homotopy class $\beta \in \pi_2(X,L)$, we denote by $\mathcal{M}_1(X, L, J, \beta)$ the moduli space of $J$-holomorphic disks in $X$ with boundary on $L$ in $\beta$ together with one boundary marked point $z_0$. Let $\overline{\mathcal{M}}_1(X, L, J, \beta)$  be the Gromov compactification of $\mathcal{M}_1(X, L, J, \beta)$. The disk counting invariant $n_\beta$ is defined by the degree of the evaluation map $\mathrm{ev}_0 \colon \overline{\mathcal{M}}_1(X, L, J, \beta) \to L$ at the marked point $z_0$. Using the morphism $q$, we define
$$
\widehat{\phi} \colon \mcal{M}_1(P_\mathbb{G}, L, J,\beta) \to \mcal{M}_1(M, K, J_M, q_* \beta), \,\, [(\varphi, z_0)] \mapsto [(q \circ \varphi, z_0)]
$$ 
which induces a map  
$$
\phi \colon \mcal{M}_1(P_\mathbb{G},L,J,\beta)/G \to \mcal{M}_1(X \git \mathbb{G}, L / G, q_*J, q_* \beta).
$$

We now state the main result. A more precise formulation is given in Theorem~\ref{theorem_fundacycle}.

\begin{thmx}[Theorem~\ref{theorem_fundacycle}]\label{thmx_A}
Suppose that $\mathcal{M}_1(X, L, J, \beta) = \overline{\mathcal{M}}_1(X, L, J, \beta)$ and that $\beta$ is regular. Then the map
$$
\phi \colon \mcal{M}_1(P_\mathbb{G},L,J,\beta)/G \to \mcal{M}_1(X \git \mathbb{G}, L / G, q_*J, q_* \beta)
$$
is an orientation-preserving homeomorphism, provided that the orientations are suitably chosen. 

In particular, the counting invariants $n_\beta$ and $n_{q_* \beta}$ coincide.
\end{thmx}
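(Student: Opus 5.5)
The plan is to show that $\phi$ is a bijection and that the linearized operators on the two sides correspond, so that $\phi$ is a local diffeomorphism between regular moduli spaces; orientations and the degree comparison then follow.

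\emph{Well-definedness.} First I would check that $\widehat{\phi}$ takes values in $\mcal{M}_1(X\git\mathbb{G}, L/G, q_*J, q_*\beta)$. A disk with boundary on $L\subseteq P_G$ must lie entirely in $X^{ss}$ for the composite $q\circ\varphi$ to make sense. My route is to use $\mathcal{M}_1=\overline{\mathcal{M}}_1$: the unstable locus $X\setminus X^{ss}$ is a complex subvariety, and I would rule out intersections by a positivity/intersection argument. I expect this to be part of the precise hypotheses of Theorem~\ref{theorem_fundacycle}, and would otherwise restrict to disks in $P_\mathbb{G}$, as the notation $\mcal{M}_1(P_\mathbb{G},L,J,\beta)$ suggests. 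Since $q$ is holomorphic and $q(L)=L/G$, the composite $q\circ\varphi$ is a $J_M$-holomorphic disk with boundary on $L/G$. It lies in class $q_*\beta$, and $\widehat{\phi}$ is $G$-invariant, so it descends to $\phi$.

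\emph{Bijectivity: lifting disks.} Given $u\colon (D,\partial D)\to (M,L/G)$, pull back the holomorphic principal $\mathbb{G}$-bundle $P_\mathbb{G}$ along $u$. Over the contractible disk this is holomorphically trivial, so holomorphic lifts $\tilde u$ exist, unique up to a holomorphic map $g\colon D\to\mathbb{G}$. The boundary condition asks that $\tilde u(\partial D)\subseteq L$. Using the Setup, $P_\mathbb{G}\simeq P_G\times_G\mathbb{G}=P_G\times_G (G\cdot\exp(i\mathfrak g))$, and $L=q|_{P_G}^{-1}(L/G)$ is a union of $G$-orbits. The lifting problem therefore reduces to a Riemann--Hilbert factorization problem on the disk: find a holomorphic $g$ with prescribed $\mathbb{G}/G$-valued boundary data. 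This is solvable, uniquely up to a constant in $G$, by the Birkhoff/Grothendieck-type factorization for loops into $\mathbb{G}/G$ (Donaldson's boundary problem for Hermitian metrics on the trivial bundle over the disk). The constant ambiguity explains why we quotient by $G$, and the same argument gives injectivity. I expect this lifting step, and with it the correct relation between $\pi_2$ classes and Maslov indices, to be the main obstacle. In particular the Maslov index of $\beta$ should equal that of $q_*\beta$ plus a term that vanishes because $\mathfrak g$ directions contribute trivially. The check is that $\dim\mathcal{M}_1(X,L,\beta)-\dim G$ equals $\dim\mathcal{M}_1(M,L/G,q_*\beta)$.

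\emph{Regularity, homeomorphism and orientation.} The tangent bundle splits along $P_G$ as $TP_\mathbb{G}|_{P_G}=q^*TM\oplus(\mathfrak g\otimes\C)$, with $TL=q^*T(L/G)\oplus\mathfrak g$, the latter being totally real. The linearized operator $D_{\tilde u}$ is then, up to a compact perturbation, upper triangular. Its diagonal blocks are $D_u$ and the standard $\bar\partial$ on the trivial pair $(\mathfrak g_\C,\mathfrak g)$, which is surjective with kernel $\mathfrak g$. Hence regularity of $\beta$ is equivalent to regularity of $q_*\beta$. The kernels fit into an exact sequence $0\to\mathfrak g\to\ker D_{\tilde u}\to\ker D_u\to 0$, so $\phi$ is a local diffeomorphism; combined with bijectivity, $\phi$ is a homeomorphism. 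Orientations are fixed using spin structures on $L$ and on $L/G$ that are compatible via $TL\cong q^*T(L/G)\oplus\mathfrak g$, together with a fixed orientation of $\mathfrak g$; the exact sequence then shows $\phi$ is orientation preserving. Finally, $\mathrm{ev}_0$ commutes with $\phi$ and the projection $L\to L/G$, so the degrees agree and $n_\beta=n_{q_*\beta}$.
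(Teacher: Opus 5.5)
Your proposal follows the paper's proof in all essentials. The lift comes from Donaldson's Dirichlet problem for flat metrics on the pulled-back bundle. Your boundary-value problem is exactly this, or equivalently the Iwasawa decomposition $L\mathbb{G}=LG\cdot L^{+}\mathbb{G}$ of the loop group; Birkhoff/Grothendieck splitting is not the relevant statement. The remaining ingredients also match:
\begin{itemize}
\item lifts are unique up to a constant in $G$;
\item the exact sequence $0\to(VP_\mathbb{G},VL)\to(TP_\mathbb{G},TL)\to(q^*TM,q^*TK)\to 0$, whose vertical pair is trivial of Maslov index zero, gives equality of Maslov indices and of regularity;
\item product spin structures give the orientations;
\item the two evaluation squares give $n_\beta=n_{q_*\beta}$.
\end{itemize}
One correction: the triangular structure of $D_{\tilde u}$ should come from the holomorphic subbundle $VP_\mathbb{G}\subseteq TP_\mathbb{G}$ on all of $P_\mathbb{G}$. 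A splitting along $P_G$ is not enough, since the interior of $\tilde u$ leaves $P_G$.

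Two further things need fixing.

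First, you flag but leave open the fact that the lift of a disk in class $q_*\beta$ lies in $\beta$, rather than in some other preimage. This is required for surjectivity onto $\mathcal{M}_1(M,K,J_M,q_*\beta)$. It amounts to injectivity of $q_*\colon\pi_2(P_\mathbb{G},L)\to\pi_2(M,K)$, which the paper proves (as an isomorphism) by the five lemma in Lemma~\ref{lemma_isoonhomotopy}. Alternatively, the Cartan decomposition gives $P_\mathbb{G}\cong P_G\times\mathfrak g$. This retracts onto $P_G$, fixing $L$ and preserving the fibres of $q$. Since $L=q|_{P_G}^{-1}(K)$, one gets $\pi_2(P_G,L)\cong\pi_2(M,K)$ for the fibration $P_G\to M$.

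Second, drop the idea of proving by positivity that disks in $X$ avoid $X\setminus X^{ss}$. This is false in general: in the $\CP^2$ example, every disk in the Maslov two class meeting $V(x_1)$ hits the unstable locus. That is why the precise statement concerns $\mathcal{M}_1(P_\mathbb{G},L,J,\beta)$, with compactness assumed there.

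Where your route genuinely differs is the homeomorphism step. The paper argues that a continuous bijection from the compact space $\mathcal{M}_1(P_\mathbb{G},L,J,\beta)/G$ to a Hausdorff space is a homeomorphism. Your kernel sequence $0\to\mathfrak g\to\ker D_{\tilde u}\to\ker D_u\to 0$ instead shows that $d\phi$ is an isomorphism. So $\phi$ is a bijective local diffeomorphism, hence a diffeomorphism, without using compactness. This is slightly stronger. It is exactly what the paper needs later in the noncompact $\mathcal{O}(-n)$ example, where it resorts to invariance of domain.
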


The most important part of the proof is the surjectivity of $\phi$. To establish this, we lift a holomorphic disk $\varphi$ from the quotient to the prequotient. This reduces the problem to constructing a holomorphic trivialization of the bundle over the disk $\varphi$ while preserving the boundary condition. To address this issue, motivated by Schultz's work \cite{Sch21}, we apply Donaldson’s result on the solvability of the Hermitian--Yang--Mills equation with Dirichlet boundary conditions.

Using Theorem~\ref{thmx_A}, we now compare the disk potentials of $L \subseteq X$ and $L/G \subseteq X \git \mathbb{G}$. From now on, we assume that $G$ is a torus, $L$ is a torus, and $L / G$ is also a torus. Recall that the \emph{disk potential}, introduced in \cite{ChoOh, FOOOToric1}, is the generating function of disk counting invariants expressed as a formal sum over relative homotopy classes. It is a Laurent polynomial in which each term corresponds to the moduli space of holomorphic disks of Maslov index two and whose coefficient is precisely the corresponding disk counting invariant.  It governs the deformation theory of Lagrangian Floer cohomology and provides SYZ mirrors \cite{FOOObook, AurouxTduality}.

Our comparison proceeds in two steps.
$$
\xymatrix{
(X,L) & (X^{ss}, L) \ar[d]^{q} \ar[l]_{\iota} \\
  & (X \git \mathbb{G}, L / G)
}
$$
The comparison of disk counting invariants after taking the quotient is established in Theorem~\ref{thmx_A}. 

To analyze the effect of removing the unstable locus, we introduce the notions of semistable and unstable disk classes. A disk class $\beta$ is called \emph{semistable} if the image of each member in $\mathcal{M}(X,L,J,\beta)$ is entirely contained in the semistable locus $X^{ss}$. Otherwise, it is called \emph{unstable}. 

Unstable classes may give rise to compactness issues since the moduli space associated with such a class need not remain compact after restricting to the semistable locus. In nice situations, every disk in such a class intersects the unstable locus, so that no holomorphic disks survive after restricting to $X^{ss}$. We refer to such classes as \emph{fully excluded}.

Assuming that every unstable class of Maslov index two is fully excluded, we define the \emph{semistable disk potential} $W^{ss}_L$ by retaining only the Laurent monomials in $W_L$ corresponding to semistable classes. From $W^{ss}_L$, we compute the disk potential of $L / G$. This construction depends on the choice of moment map level (or stability condition), since the semistable locus $X^{ss}$ varies accordingly, and hence so does $W^{ss}_L$. Finally, we remark that under the inclusion $\iota$, the relative homotopy classes may change, in the sense that distinct disk classes in $X$ can become identified in $X^{ss}$. Nevertheless, we show that this identification does not affect the resulting disk potential.

Assume the following setup.
\begin{setupx}
\begin{enumerate}
\item $G$ acts freely on the level set $\mu_G^{-1}(0)$ (and hence on $L$),
\item every homotopy class of Maslov index two is either semistable or fully excluded, 
\item the Lagrangian torus $L$ is positive, $J$ is regular, $\mathrm{ev}_0$ is a submersion, $X \git G$ is symplectically Fano.
\end{enumerate}
\end{setupx}

Recall that $q_* \colon \pi_1(L) \to \pi_1(L / G)$ and consider the kernel of $q_*$. By using the relations determined by $\ker(q_*)$, we eliminate variables in $W_L^{ss}$ and then obtain the disk potential $W_{L/G}$.

We now state the second main result. A more precise formulation is given in Theorem~\ref{theorem_diskGIT}.

\begin{thmx}[Theorem~\ref{theorem_diskGIT}]\label{thmx_diskGIT}
Under the above assumptions, the disk potential of ${L/G}$ is given by
$$
W_{L / G} (\mathbf{z}) = W_L^{ss} |_{\ker(q_*)} (\mathbf{z}).
$$
\end{thmx}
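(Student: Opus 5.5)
We would prove Theorem~\ref{thmx_diskGIT} by comparing the two potentials term by term, using Theorem~\ref{thmx_A} for the coefficients and a separate argument to match the monomials. Write
$$W_L(\mathbf{y})=\sum_{\mu(\beta)=2} n_\beta\, \mathbf{y}^{\partial\beta},$$
where $\mathbf{y}$ are coordinates on $H^1(L)$-type local systems, or equivalently variables indexed by a basis of $\pi_1(L)$. We would proceed in four steps.

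\emph{Step 1: removing the unstable locus.} For a fully excluded class, every disk meets $X\setminus X^{ss}$, so $\mathcal{M}_1(X^{ss},L,J,\beta)=\emptyset$. For a semistable class, the moduli spaces in $X$ and in $X^{ss}$ coincide. Positivity of $L$ together with regularity of $J$ gives $\mathcal{M}_1=\overline{\mathcal{M}}_1$ in Maslov index two, since there is no bubbling. Hence the counts in $X^{ss}$ are exactly the coefficients of $W_L^{ss}$. We then have to handle the fact that distinct classes $\beta,\beta'\in\pi_2(X,L)$ may coincide in $\pi_2(X^{ss},L)$. Because the monomial only records $\partial\beta\in\pi_1(L)$, classes that become identified have the same boundary. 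So summing their counts gives the count of the merged class in $X^{ss}$, and the potential is unchanged.

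\emph{Step 2: applying Theorem~\ref{thmx_A}.} For each semistable Maslov index two class $\beta$, Theorem~\ref{theorem_fundacycle} gives $n_\beta=n_{q_*\beta}$. We also need $\mu(q_*\beta)=\mu(\beta)$. We would verify this by splitting $TX|_{P_G}\cong q^*TM\oplus\mathfrak{g}_\C$, where $\mathfrak{g}_\C$ is trivialized by the infinitesimal action. Along $L$, the trivial summand $\mathfrak{g}$ is tangent to $L$ and $i\mathfrak{g}$ is normal to $L$. Its Maslov contribution is therefore zero, since the trivial bundle pair has totally real boundary condition $\mathfrak{g}\subset\mathfrak{g}_\C$.

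\emph{Step 3: bijection on classes.} Conversely, every Maslov two class $\gamma$ with $n_\gamma\neq 0$ in $X\git\mathbb{G}$ must come from some semistable $\beta$. Surjectivity of $\phi$ in Theorem~\ref{thmx_A} lifts a disk in class $\gamma$ to $X^{ss}$, which gives such a $\beta$. The correspondence $\beta\mapsto q_*\beta$ is injective on classes with nonempty moduli space. The reason is that lifts differ by the $G$-action, which preserves the class since $G$ is connected, up to the identification in $X^{ss}$ already treated in Step 1. Here we would use the homotopy exact sequence of the fibration $G\to L\to L/G$ and of $P_\mathbb{G}\to M$.

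\emph{Step 4: monomials.} The boundary of $q_*\beta$ is $q_*(\partial\beta)$. Since $q_*\colon\pi_1(L)\to\pi_1(L/G)$ is surjective with kernel $\pi_1(G)$, substituting variables by setting $\mathbf{y}^{c}=1$ for $c\in\ker(q_*)$ turns $\mathbf{y}^{\partial\beta}$ into $\mathbf{z}^{q_*\partial\beta}$, which yields the claimed formula.

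We expect Step 3 to be the main obstacle: showing that the classes are matched bijectively and that several semistable classes do not collapse onto one class in $M$ in a way that double-counts. Our first attempt would be to show that if $q_*\beta=q_*\beta'$ with both semistable, then $\beta-\beta'$ is the image of a class in $\pi_2(P_\mathbb{G},L)$ supported in a fiber $\mathbb{G}$ with boundary in $G$. Since $\pi_2(\mathbb{G},G)\cong\pi_2(G_\C/G)=0$, as $\mathbb{G}/G$ is contractible, this would force $\beta=\beta'$ in $\pi_2(X^{ss},L)$.
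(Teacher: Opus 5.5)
Your outline follows the paper's proof. The paper also indexes $W_{L/G}$ by classes in $\pi_2(X^{ss},L)$ through the isomorphism $q_*\colon\pi_2(X^{ss},L)\cong\pi_2(M,K)$ (Lemma~\ref{lemma_isoonhomotopy}, proved by the five lemma; your fibration-of-pairs argument with $\pi_2(\mathbb{G},G)=0$ is an equivalent route). It then takes $n_\alpha=n_{q_*\alpha}$ from Theorem~\ref{theorem_fundacycle}, the Maslov index from Lemma~\ref{lemma_qregular}, and finishes with the variable substitution.

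However, Step~1 states the comparison between $X$ and $X^{ss}$ backwards, and read literally this makes Steps~2 and~3 false. The inclusion induces $\iota_*\colon\pi_2(X^{ss},L)\to\pi_2(X,L)$, so distinct classes of $\pi_2(X,L)$ cannot ``coincide in $\pi_2(X^{ss},L)$''. What actually happens is that several distinct classes $\alpha\in I^{ss}_\beta\subseteq\pi_2(X^{ss},L)$ can merge into one semistable class $\beta$ of $X$, with $n_\beta=\sum_{\alpha\in I^{ss}_\beta}n_\alpha$ (Corollary~\ref{cor_nbetanalpha}). This has three consequences for a semistable $\beta\in\pi_2(X,L)$. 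First, $q_*\beta$ is undefined. Second, your Step~2 identity $n_\beta=n_{q_*\beta}$ only makes sense for classes in $\pi_2(X^{ss},L)$. Third, there is no bijection between semistable Maslov index two classes of $X$ and those of $M$: one $\beta$ corresponds to the $|I^{ss}_\beta|$ distinct classes $q_*\alpha$. This is exactly what happens in the paper's quadric computation. There $\widetilde\alpha\neq\widetilde\alpha'$ in the semistable locus map to the two distinct classes $\alpha,\alpha'$ of $\mathcal{Q}^2$ (with $n_\alpha+n_{\alpha'}=2$), but become a single class after compactification.

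So the obstacle you anticipate at the end, several semistable classes collapsing onto one class in $M$, cannot occur: $q_*$ is injective on $\pi_2(X^{ss},L)$ by your own argument, and $\iota_*$ is a map. The real identification happens upstairs in $X$, where it is harmless. To repair the proof, run Steps~2--4 over $\alpha\in\Pi^{(2)}(X^{ss},L)$ and then regroup by $\beta=\iota_*\alpha$. All $\alpha\in I^{ss}_\beta$ have boundary $\partial\beta$ (Lemma~\ref{lemma_boundarysame}) and area $\omega(\beta)$. Hence $\sum_{\alpha\in I^{ss}_\beta}n_{q_*\alpha}\,\mathbf{z}^{q_*\partial\alpha}T^{\omega(\beta)/2\pi}$ is precisely the restriction of the single term $n_\beta\,\mathbf{y}^{\partial\beta}T^{\omega(\beta)/2\pi}$ of $W_L^{ss}$.

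Two smaller points should be made explicit. For each $\alpha\in\Pi^{(2)}(X^{ss},L)$, the class $\iota_*\alpha$ is semistable: it carries a disk contained in $X^{ss}$, so it is not fully excluded, hence not unstable by hypothesis. And $W_{L/G}$ being a well-defined count of Maslov index two disks uses positivity of $K$ (Lemma~\ref{lemma_positivity}) and the Fano hypothesis on $X\git\mathbb{G}$; your outline invokes neither.
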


\begin{remark}
Using equivariant Lagrangian correspondences, Lau--Leung--Li \cite{LLL23} developed foundational results relating the equivariant disk potential $W_{G,L}$ of $L$ and the disk potential $W_{L/G}$ of $L/G$ via moment-level Lagrangian correspondences in a general setting. Their result explains the discrepancy between $W_{G,L}$ and $W_{L / G}$ through disk counting associated with the Lagrangian correspondence.
\end{remark}

We emphasize that one of the main advantages of Theorem~\ref{theorem_diskGIT} is its computational effectiveness. For completely integrable systems arising in toric settings or from toric degenerations, one can apply gradient Hamiltonian flows associated with the components of the moment map to analyze stability. By constructing suitable zigzag paths through complex orbits, we can determine which inverse images of facets (or divisors) are contained in the semistable locus. This makes it possible to compute the semistable disk potential explicitly. Furthermore, it is also useful for lifting disk potentials from the base to the total space.

Using the methods developed in this paper together with degeneration (in stages), we derive the disk potential of the quadric hypersurface $\mathcal{Q}^n \subseteq \CP^{n+1}$. It carries a completely integrable system $\Phi_\mathrm{GZ} \colon \mathcal{Q}^n \to \R^n$, called the Gelfand--Zeitlin system, whose image is a simplex $\Delta_n$. This extends \cite{Kim23}, which computes the disk potential of a monotone Lagrangian torus fiber of $\Phi_\mathrm{GZ}$.

\begin{thmx}[Theorem~\ref{thm_quadricshypdisk}]\label{thmx_C}
Let $n \geq 2$ and let $L_n(\mathbf{u})$ be the torus fiber of the completely integrable system $\Phi_\mathrm{GZ}$ over $\mathbf{u} \in \mathrm{Int}(\Delta_n)$ in $\mathcal{Q}^n$. Then the disk potential is
\begin{equation*}
W_{L(\mathbf{u})}(\mathbf{y}) = 
\frac{1}{y_n} T^{n-u_n} + \frac{y_n}{y_{n-1}}  T^{u_n-u_{n-1}}  + \cdots + \frac{y_2}{y_{1}}  T^{u_1 - u_2} + 2y_2  T^{u_2} +  y_1 y_2 T^{u_1 + u_2}.
\end{equation*}
\end{thmx}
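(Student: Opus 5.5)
We obtain Theorem~\ref{thmx_C} by realizing $\mathcal{Q}^n$ and its torus fibers as GIT quotients, applying Theorem~\ref{thmx_diskGIT}, and using induction on $n$ together with toric degeneration where needed.

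\textbf{Base case.} For $n=2$ we have $\mathcal{Q}^2 \simeq \CP^1 \times \CP^1$. The Gelfand--Zeitlin fibers are, after an affine change of coordinates on $\Delta_2$, the toric fibers of a degenerate toric structure. We can compute these fibers directly by degenerating to the toric orbifold $\CP(1,1,2)$ and using the known count, in which the two disk classes through the vertex singularity give the coefficient $2$. This is the same mechanism that produces the term $2y_2T^{u_2}$ in general. The case $n=2$ should reproduce the formula with the four terms $y_2^{-1}T^{2-u_2}$, $\frac{y_2}{y_1}T^{u_1-u_2}$, $2y_2T^{u_2}$, $y_1y_2T^{u_1+u_2}$.

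\textbf{Inductive step via GIT.} We realize $\mathcal{Q}^n$ as a torus quotient related to $\mathcal{Q}^{n+1}$, or more concretely to a product $\mathcal{Q}^{n-1}\times \C^2$ (or $\CP^{n+1}$-type total space). The aim is a space $X$ carrying a Hamiltonian $S^1$-action such that the GZ fiber $L_n(\mathbf{u})$ equals $L/S^1$ for a $G$-invariant torus $L\subseteq\mu^{-1}(0)$ that is a product of a lower GZ fiber and a standard Clifford-type torus. The natural choice is the last GZ component $\Phi_n$ generating an $S^1$-action on an open dense set. We degenerate in stages, via the Gelfand--Zeitlin toric degeneration, to a toric variety whose moment polytope is $\Delta_n$. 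Along the degeneration the potential of $L$ is that of the toric fiber (Cho--Oh/FOOO) corrected at the singular stratum. The corrected coefficient $2$ is obtained from the $n=2$ case, since the singular locus locally looks like (cone over $\mathcal{Q}^2$)$\times$(torus).

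Given $X$ and $L$, we first check Setup (1) and (3): freeness on the level set, positivity, regularity and Fano-ness, all of which are standard for toric fibers. We then compute $W_L$ for the prequotient torus, which by the above is a Laurent polynomial read off from the facets. Next we determine the semistable classes. We flow along gradient Hamiltonian flows of the moment components to decide which facet preimages lie in $X^{ss}$. The facets meeting the unstable locus contribute fully excluded classes and are discarded, giving $W_L^{ss}$. Finally, we restrict to $\ker(q_*)$, setting the variable dual to the quotiented circle equal to the relation given by the weights. The result should collapse to the displayed chain $\frac{1}{y_n}T^{n-u_n}+\frac{y_n}{y_{n-1}}T^{u_n-u_{n-1}}+\cdots$, with the exponents $T^{u_i-u_{i+1}}$ coming from the affine functions defining $\Delta_n$.

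\textbf{Main obstacle.} The hardest step is verifying Setup (2): every Maslov-two class is either semistable or fully excluded. In particular this must hold for the disks through the non-toric (singular GZ) locus that produce the coefficient $2$. It also requires checking that the identification of relative classes under $\iota$ does not merge terms incorrectly. We would address this first by the zigzag argument through complex orbits suggested after Theorem~\ref{thmx_diskGIT}, explicitly tracking the divisor $\{\Phi_1=\Phi_2\}$-type faces. If needed, we would fall back on checking that $\mathrm{ev}_0$ remains a submersion, using the explicit $n=2$ moduli spaces to transport the count.
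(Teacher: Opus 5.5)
\textbf{There is a genuine gap, and it sits exactly at the only nontrivial term: the coefficient $2$ of $y_2T^{u_2}$.}

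In your set-up $\mathcal{Q}^n$ is the \emph{quotient}. Theorem~\ref{theorem_diskGIT} therefore only converts the problem into computing $W_L$ for a larger prequotient torus, and that computation contains the same difficulty.

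You then propose to obtain $W_L$ from a toric degeneration ``corrected at the singular stratum''. The $2$ is imported from $n=2$ on the grounds that the singular locus ``locally looks like (cone over $\mathcal{Q}^2$)$\times$(torus)''. This is a heuristic, not an argument: counting invariants are global, and nothing in your proposal turns a local model into an equality of counts.

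The correspondence also needs hypotheses your set-up does not meet:
\begin{itemize}
\item It requires a \emph{global} $\mathbb{G}$-action, free on the level set, with a smooth semistable locus. But $\Phi_n$ generates a circle action only on an open dense subset of $\mathcal{Q}^n$.
\item The toric central fibre $\mathcal{T}^n$ is singular along $V(x_0,x_1,x_2)\simeq\CP^{n-2}$. Its Cho--Poddar count does not determine the coefficient of $y_2T^{u_2}$.
\item Consequently the Setup checks are not ``standard for toric fibers''.
\end{itemize}
For the base case, degenerating to $\CP(1,1,2)$ does not by itself produce the $2$. The paper simply cites the known $n=2$ formula.

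\textbf{The missing idea is to run the correspondence downward}, with the $n$-dimensional space as the prequotient and $\mathcal{Q}^2$ as the quotient. The paper proceeds in four steps.
\begin{enumerate}
\item It uses the Nishinou--Nohara--Ueda degeneration to fix all vertex terms of $\Delta^\vee$. These come from regular basic classes with count $1$, which persist for small $t$. Only the coefficient $m$ of the class $\beta=(\beta_0+\beta_1)/2$, with boundary $(0,1,0,\dots,0)$, is left undetermined.
\item It degenerates only partially, to $\mathcal{X}_{(3,t_3)}=\{x_0^2+x_1^2+x_2^2+t_3x_3^2=0\}$. Unlike $\mathcal{Q}^n$, this is invariant under the $(\C^*)^{n-2}$-action scaling $x_4,\dots,x_{n+1}$. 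The GIT quotient $\CP^{n+1}\git_\chi(\C^*)^{n-2}\simeq\CP^3$ maps it onto a smooth quadric surface and maps $L^n(\mathbf{u})$ onto $L^2(\mathbf{v})$. Its singular locus $V(x_0,\dots,x_3)$ is unstable.
\item Lemma~\ref{lemma_nonintersection} shows that every disk in $\beta$ stays in the stable locus. Its proof uses Gromov compactness as $t\to 0$, with no bubbling by area minimality. So one only needs the correspondence of Theorem~\ref{theorem_fundacycle} and Corollaries~\ref{cor_countinginv} and~\ref{cor_nbetanalpha} for this single class, not Setup (2) for all Maslov-two classes. This gives $m=n_\alpha+n_{\alpha'}=2$.
\item This works only on the level $(u_3,\dots,u_n)=(2,\dots,n-1)$. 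The paper transports the count to arbitrary $\mathbf{u}$ by a cobordism built from the maps $\eta\circ\varphi$ into $(\C^*)^{n-2}$. Your proposal would need an analogous step as well.
\end{enumerate}
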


The main idea is that, even when a local $G$-action does not extend to a global $G$-action on $\mathcal{Q}^n$, one can deform $\mathcal{Q}^n$ while preserving the relevant counting invariants so that the degenerated variety admits a $G$-action. This allows us to apply the above framework to compare disk counting invariants via GIT quotients. This perspective will play a key role in the study of disk potentials for isotropic flag varieties. We expect that this reduction procedure will provide an effective method for computing disk potential of higher-dimensional isotropic flag varieties by reducing them to lower-dimensional cases.

The organization of this paper is as follows. In Section 2, we review the relevant background on symplectic reduction, GIT quotients, and their relationship. In Section 3, we establish a correspondence between the moduli spaces before and after taking quotients. In Section 4, we analyze disk potentials on $X$, $X^{ss}$, and $X \git \mathbb{G}$. In Section 5, we present several examples illustrating applications of the main theorems. Finally, in Section 6, we compute the disk potential function for quadric hypersurfaces.

\subsection*{Acknowledgement} 
The author would like to express his gratitude to Siu-Cheong Lau and Xiao Zheng. This work grew out of collaborations and enlightening discussions with them. The author also thanks Yongnam Lee for his interest in this work and Leon Li for his explanations of his joint work with Lau and Leung.

\section{Symplectic reductions and GIT quotients}\label{sec_sympredGIT}

The goal of this section is to review stability for symplectic reductions and GIT quotients as well as the Kempf--Ness theorem, and to collect several basic facts on symplectic reductions and GIT quotients. Our main references are \cite{GRS21, Hos, Tom06, Woo09}.

Let $V$ be a finite-dimensional complex vector space and let $G$ be a compact, connected Lie subgroup of $\mathrm{U}(n+1)$ acting on $V$ via a representation $G \to \mathrm{GL}(V)$. Let $\mathbb{G}$ be the complexification of $G$, regarded as a subgroup of $\mathrm{GL}_{n+1}(\C)$, and consider the induced complexified representation $\mathbb{G} \to \mathrm{GL}(V)$. This linear $\mathbb{G}$-action on $V$ induces a $\mathbb{G}$-action on the projective space $\mathbb{P}(V)$.

Via the inclusion $G \hookrightarrow \mathbb{G}$, the $\mathbb{G}$-action induces a Hamiltonian $G$-action on $\mathbb{P}(V)$ with respect to the Fubini--Study form $\omega_\mathrm{FS}$. We normalize $\omega_\mathrm{FS}$ so that it is induced from the standard symplectic form on the sphere of radius $\sqrt{2}$. A corresponding moment map is given by 
$$
\mu_G \colon \mathbb{P}(V) \to \frak{g}^* \quad \langle \mu_G([v]), \xi \rangle = \frac{\langle v, \sqrt{-1}\xi v \rangle}{|v|^2}.
$$
Let $X \subseteq \mathbb{P}(V)$ be a $\mathbb{G}$-invariant projective variety. Then the $G$-action on $\mathbb{P}(V)$ restricts to a Hamiltonian $G$-action on $X$ with respect to the induced K\"{a}hler form $\omega_\mathrm{FS} |_X$. By abuse of notation, we denote the corresponding moment map again by 
\begin{equation}\label{equ_momentmaprest}
\mu_G \coloneqq \mu_G |_X \colon X \to \frak{g}^*.
\end{equation}

We now recall the notions of stability in symplectic and algebraic geometry.

\begin{definition}\label{def_symplecticstability}
An element $x \in X$ is called 
\begin{itemize}
\item \emph{$\mu$-unstable} if $\overline{\mathbb{G}(x)} \cap \mu_G^{-1}(0) = \emptyset$,
\item \emph{$\mu$-semistable} if $\overline{\mathbb{G}(x)} \cap \mu_G^{-1}(0) \neq \emptyset$,
\item \emph{$\mu$-polystable} if ${\mathbb{G}(x)} \cap \mu_G^{-1}(0) \neq\emptyset$,
\item \emph{$\mu$-stable} if $\mathbb{G}(x) \cap \mu_G^{-1}(0) \neq \emptyset$ and the isotropy subgroup $\mathbb{G}_x$ is discrete.
\end{itemize}
\end{definition}

\begin{definition}\label{def_GITstability}
An element $x = [v] \in X \subseteq \mathbb{P}(V)$ is called 
\begin{itemize}
\item \emph{unstable} if $0 \in \overline{\mathbb{G}(v)}$, 
\item \emph{semistable} if $0 \notin \overline{\mathbb{G}(v)}$, 
\item \emph{polystable} if $\mathbb{G}(v) = \overline{\mathbb{G}(v)}$, 
\item \emph{stable} if $\mathbb{G}(v) = \overline{\mathbb{G}(v)}$ and the isotropy subgroup $\mathbb{G}_v$ is discrete.
\end{itemize}
Note that the choice of representative $v$ does not matter since the $\mathbb{G}$-action is linear. 
\end{definition}

By the Kempf–Ness theorem, these two notions of stability coincide.

\begin{theorem}[\cite{KN78}]
Let $\mu_G \colon X \to \frak{g}^*$ be the moment map in~\eqref{equ_momentmaprest} and let $x = [v] \in X$. Then $x$ is unstable (resp.  semistable, polystable, stable) if and only if $x$ is $\mu$-unstable (resp.  $\mu$-semistable, $\mu$-polystable, $\mu$-stable).
\end{theorem}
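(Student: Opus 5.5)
The plan is to prove the Kempf--Ness theorem through the Kempf--Ness function. For $v \neq 0$, define
$$
f_v \colon \mathbb{G} \to \R, \qquad f_v(g) = \tfrac12 \log |g v|^2 .
$$
Since $|\cdot|$ is $G$-invariant, $f_v$ descends to the symmetric space $G\backslash \mathbb{G}$. We use the Cartan decomposition $\mathbb{G} = G \exp(\sqrt{-1}\frak{g})$. Along a geodesic $t \mapsto \exp(\sqrt{-1} t\xi) g$, a direct computation gives two facts. First, the first derivative at $t=0$ is, up to sign and the normalization fixed above, $\langle \mu_G([gv]), \xi\rangle$. Second, the second derivative is $|\xi \cdot gv|^2/|gv|^2 - (\text{projection term}) \geq 0$; the projection term cancels because we differentiate $\log$ of a norm of a holomorphic orbit. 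So $f_v$ is convex along geodesics, and it is strictly convex unless $\exp(\sqrt{-1}t\xi)$ fixes the line through $gv$. Consequently the critical points of $f_v$ are exactly the $g$ with $[gv] \in \mu_G^{-1}(0)$, and any critical point is a global minimum.

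Next I would compare the two notions of stability.

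\emph{Polystable $\Leftrightarrow$ $\mu$-polystable.} Suppose $\mathbb{G}v$ is closed. Then $g \mapsto |gv|$ attains its infimum on the orbit, because the norm is proper on a closed orbit. That minimum is a critical point of $f_v$, so $\mathbb{G}(x) \cap \mu_G^{-1}(0) \neq \emptyset$. Conversely, suppose $\mu_G([v]) = 0$, so $e$ is a critical point. By convexity, $f_v$ is minimized on the entire orbit, and it grows at least linearly along every geodesic ray that is not tangent to the stabilizer. I would then argue that the orbit is closed. Let $w \in \overline{\mathbb{G}v}$. Then $w$ is polystable by the standard fact that a closure contains a closed orbit. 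Applying the Hilbert--Mumford criterion (or the existence of a minimum on the closed orbit of $w$) yields a one-parameter subgroup $\exp(\sqrt{-1}t\xi)$ along which $f_v$ stays bounded as $t\to\infty$. Linear growth and convexity then force $\xi$ to lie in the stabilizer's Lie algebra, and so $w \in \mathbb{G}v$.

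\emph{Semistable $\Leftrightarrow$ $\mu$-semistable.} Suppose $0 \notin \overline{\mathbb{G}v}$. Take a closed orbit $\mathbb{G}w \subseteq \overline{\mathbb{G}v}$; such an orbit exists, being an orbit of minimal dimension, and it satisfies $w \neq 0$. By the polystable case, $[\mathbb{G}w]$ meets $\mu_G^{-1}(0)$, and this point lies in $\overline{\mathbb{G}(x)}$. Conversely, suppose $\overline{\mathbb{G}(x)}$ contains $[w]$ with $\mu_G([w])=0$, and assume for contradiction that $0 \in \overline{\mathbb{G}v}$. By Hilbert--Mumford there is a one-parameter subgroup $\lambda$ with $\lambda(t)v \to 0$. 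I would instead use the invariant-theoretic characterization: $0 \notin \overline{\mathbb{G}v}$ if and only if some nonconstant homogeneous invariant polynomial $p$ satisfies $p(v)\neq 0$. The minimal-norm vector $w$ on a closed orbit with $\mu_G([w])=0$ is separated from $0$ by such an invariant. This property passes to $v$ because $[w] \in \overline{\mathbb{G}[v]}$ and invariants are constant on the closures of orbits in the cone.

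\emph{Unstable.} The unstable case is the complement of the semistable case.

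\emph{Stable.} For the stable case, add the isotropy condition. Since $\mathbb{G}_v \subseteq \mathbb{G}_{[v]}$ and the two agree up to a character, discreteness of one is equivalent to discreteness of the other on the polystable locus. Here I use that $\mu_G([v])=0$ forces the character to be unitary, and hence trivial on the identity component.

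The main obstacle is the step showing that $\mu_G([v])=0$ implies $\mathbb{G}v$ is closed. It is the only place where convexity alone does not suffice and some properness or Hilbert--Mumford-type input is needed. I would handle it via linear growth of $f_v$ along geodesic rays, using the weight decomposition of $v$ under the torus $\exp(\sqrt{-1}\R\xi)$.
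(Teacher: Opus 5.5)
The paper gives no proof of this statement; it simply cites \cite{KN78}, so your Kempf--Ness-function outline has to stand on its own. Most of it is sound. However, the step you yourself flag, $\mu_G([v])=0\Rightarrow\mathbb{G}v$ closed, does not go through as written, and two things are missing.

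First, an arbitrary $w\in\overline{\mathbb{G}v}$ is not polystable. You must take $w$ in the closed orbit contained in $\overline{\mathbb{G}v}$. You then need the strong (Birkes--Richardson) form of Hilbert--Mumford, which gives a one-parameter subgroup $\lambda$ with $\lim_{t\to0}\lambda(t)v\in\mathbb{G}w$.

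Second, and more seriously, such a $\lambda$ need not be of the form $\exp(\sqrt{-1}s\xi)$ with $\xi\in\mathfrak{g}$. It is only conjugate to one: $\lambda=h\lambda_0h^{-1}$ with $\lambda_0(S^1)\subseteq G$. In general $t\mapsto G\lambda(t)$ is not a geodesic ray from the critical point $Ge$. Your linear-growth dichotomy is proved only for rays issuing from $e$: the function is constant if $\sqrt{-1}\xi$ annihilates $v$, and otherwise has slope equal to the top weight, which is $>0$ because $\langle\mu_G([v]),\xi\rangle=0$. So it cannot be applied to $\lambda$ yet.

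The fix is to transport $\lambda$ to $e$.
\begin{itemize}
\item Let $P=\{p\in\mathbb{G} : \lim_{t\to0}\lambda_0(t)p\lambda_0(t)^{-1}\text{ exists}\}$. Since $G$ acts transitively on $\mathbb{G}/P$, write $h=kp$ with $k\in G$ and $p\in P$.
\item Because $\lambda_0(t)p^{-1}\lambda_0(t)^{-1}$ converges in $\mathbb{G}$, the subgroup $\lambda'=k\lambda_0k^{-1}$ still satisfies $\lim_{t\to0}\lambda'(t)v\in\mathbb{G}w$. Since $\lambda'(S^1)\subseteq G$, $\lambda'$ is a geodesic ray from $e$.
\item Boundedness of $f_v$ along $\lambda'$ now forces $\lambda'$ to fix $v$. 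Hence $v\in\mathbb{G}w$, and $\mathbb{G}v$ is closed.
\end{itemize}
Your parenthetical alternative (``the existence of a minimum on the closed orbit of $w$'') does not replace this argument. It only produces a zero of $\mu_G$ in $\overline{\mathbb{G}v}$ of norm $\geq|v|$. Showing that this zero lies in $G\cdot v$ is exactly the uniqueness statement you are trying to prove. The stable case inherits this gap, since it rests on the polystable equivalence.

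The remaining steps are fine apart from wording.
\begin{itemize}
\item In the second variation nothing ``cancels.'' The subtracted term $\langle\sqrt{-1}\xi w,w\rangle^2/|w|^4$ is dominated by $|\sqrt{-1}\xi w|^2/|w|^2$ by Cauchy--Schwarz, with equality iff $w$ is an eigenvector of $\sqrt{-1}\xi$.
\item In the converse semistable direction you do not need closedness of $\mathbb{G}w$. The condition $\mu_G([w])=0$ already gives $|gw|\geq|w|>0$, so $0\notin\overline{\mathbb{G}w}$; this part is therefore independent of the gap.
\item The transfer from $w$ to $v$ in that direction uses that the zero set of a homogeneous invariant is a closed $\mathbb{G}$-invariant cone. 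It is not true that invariants are constant on projective orbit closures.
\item The unitary character on $\mathbb{G}_{[v]}$ is trivial on the identity component because it is holomorphic: its differential is $\C$-linear with image in $\sqrt{-1}\R$, hence zero.
\end{itemize}
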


We denote by $X^{us}$, $X^{ss}, X^{ps}$, and $X^{s}$ the loci of ($\mu$)-unstable, ($\mu$)-semistable, ($\mu$)-polystable, and ($\mu$)-stable points of $X$, respectively. 

In this situation, we consider two quotient constructions, the symplectic reduction and the GIT quotient. Using the Hamiltonian $G$-action on $X$, we construct the symplectic reduction $X \git G \coloneqq \mu_G^{-1}(0) / G$. On the other hand, using the linear $\mathbb{G}$-action, we construct the GIT quotient $X \git \mathbb{G} \coloneqq \mathrm{Proj} \bigl( \bigoplus_{k} H^0(X, \mcal{O}(k))^\mathbb{G} \bigr)$. Moreover, the inclusion of the graded algebra $\bigoplus_{k} H^0(X, \mcal{O}(k))^\mathbb{G} \to \bigoplus_{k} H^0(X, \mcal{O}(k))$ induces a morphism 
$$
q \colon X^{ss} \to X \git \mathbb{G}.
$$

\begin{theorem}[\cite{KN78}]\label{theorem_KNthm}
The inclusion $\iota \colon \mu^{-1}_G(0) \to X^{ss}$ induces a homeomorphism
$$
\underline{\iota} \colon X \git G \to X \git \mathbb{G}.
$$
\end{theorem}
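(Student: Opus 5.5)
We would prove Theorem~\ref{theorem_KNthm} in three steps. First, the map is well defined. Then it is bijective, using the Kempf--Ness theorem stated above together with a uniqueness statement for zeros of the moment map on orbits. Finally, it is a homeomorphism, by a compactness argument.

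\emph{Well-definedness.} By the Kempf--Ness theorem, every point of $\mu_G^{-1}(0)$ is $\mu$-polystable, hence semistable, so $\iota(\mu_G^{-1}(0))\subseteq X^{ss}$. The map $q\circ\iota$ is continuous and constant on $G$-orbits, because $q$ is constant on $\mathbb{G}$-orbits: it is defined by $\mathbb{G}$-invariant sections. Hence $q \circ \iota$ descends to a continuous map $\underline{\iota}\colon \mu_G^{-1}(0)/G \to X\git\mathbb{G}$.

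\emph{Bijectivity.} Recall the standard description of $X\git\mathbb{G}$: its points correspond bijectively to closed $\mathbb{G}$-orbits in $X^{ss}$, that is, to polystable orbits. Two semistable points have the same image under $q$ if and only if their orbit closures meet, and every orbit closure in $X^{ss}$ contains a unique closed orbit.

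For surjectivity, each closed orbit $\mathbb{G}(x)$ is polystable, so by Kempf--Ness it meets $\mu_G^{-1}(0)$.

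For injectivity, the key claim is that if $x,y\in\mu_G^{-1}(0)$ and $y\in\mathbb{G}(x)$, then $y\in G(x)$. We would prove this with the Kempf--Ness function $\Phi_v(g)=\log|g\cdot v|^2$ on $\mathbb{G}/G$, which is geodesically convex along $t\mapsto \exp(\sqrt{-1}t\xi)$. Its critical points are exactly the $g$ with $\mu_G([g v])=0$.
\begin{itemize}
\item By the polar decomposition $\mathbb{G}=G\exp(\sqrt{-1}\frak{g})$, write $y=k\exp(\sqrt{-1}\xi)x$ with $k\in G$.
\item Along the geodesic $t\mapsto \exp(\sqrt{-1}t\xi)$, the function $\Phi_v$ is convex and has critical points at $t=0$ and $t=1$. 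It is therefore constant on $[0,1]$.
\item The second derivative $2|\xi v|^2$-type term then vanishes, which forces $\xi$ to lie in the stabilizer algebra of $v$.
\item Consequently $\exp(\sqrt{-1}\xi)x=x$, and so $y=kx\in G(x)$.
\end{itemize}
If instead $q(x)=q(y)$ with $x,y\in\mu_G^{-1}(0)$, both orbits $\mathbb{G}(x)$ and $\mathbb{G}(y)$ are closed and their closures meet, so the two orbits coincide. This reduces injectivity to the claim above.

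\emph{Homeomorphism.} The set $\mu_G^{-1}(0)$ is a closed subset of the compact space $X$, so $\mu_G^{-1}(0)/G$ is compact. The GIT quotient $X\git\mathbb{G}$ is a projective variety, hence Hausdorff in the analytic topology. A continuous bijection from a compact space to a Hausdorff space is a homeomorphism.

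The main obstacle is the convexity and uniqueness argument in the injectivity step. One must compute $\frac{d^2}{dt^2}\Phi_v$ precisely, with the normalization of $\mu_G$ chosen above, and handle the fact that we work on $\mathbb{P}(V)$ rather than on $V$. To pass to $V$, we would fix a lift $v$ and use that $\mu_G$ is the derivative of $\Phi_v$ at the identity. We would also rely on the standard GIT input that fibers of $q$ consist of orbits with intersecting closures, citing \cite{GRS21, Hos} for this.
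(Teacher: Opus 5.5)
Your proof is correct. The paper gives no argument of its own for this statement; it quotes the result from Kempf--Ness. What you describe is essentially the classical proof behind that citation. The GIT description of $X \git \mathbb{G}$ by closed orbits in $X^{ss}$, together with the stated equivalence of stability notions, yields surjectivity. Convexity of $f(t)=\log|\exp(\sqrt{-1}t\xi)v|^2$ shows that a $\mathbb{G}$-orbit meets $\mu_G^{-1}(0)$ in a single $G$-orbit. Finally, compactness of $\mu_G^{-1}(0)/G$ and Hausdorffness of the projective variety $X \git \mathbb{G}$ upgrade the continuous bijection to a homeomorphism.

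When you write it out, make three small points explicit.

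First, the vanishing of $f'(1)$ uses that $\exp(\sqrt{-1}\xi)x=k^{-1}y$ lies in $\mu_G^{-1}(0)$. This holds because $\mu_G^{-1}(0)$ is $G$-invariant, by equivariance of $\mu_G$.

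Second, the phrase ``closed orbits in $X^{ss}$, that is, polystable orbits'' identifies closedness of $\mathbb{G}\cdot[v]$ in $X^{ss}$ with closedness of $\mathbb{G}\cdot v$ in $V$. The latter is how the paper defines polystability. This is a standard lemma, but you use it in both directions: closed in $X^{ss}$ implies polystable for surjectivity, and the converse for injectivity. So state or cite it.

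Third, the step you flag as the main obstacle is easier than you suggest. With the paper's normalization, $f'(t)=2\langle\mu_G(\exp(\sqrt{-1}t\xi)x),\xi\rangle$. Once $f'\equiv 0$ on $[0,1]$, the function $|\exp(\sqrt{-1}t\xi)v|^2$ is constant there. Its second derivative $4|\xi\exp(\sqrt{-1}t\xi)v|^2$ therefore vanishes, which gives $\xi v=0$ directly, with no Cauchy--Schwarz bookkeeping and no issue about passing between $\mathbb{P}(V)$ and $V$.
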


We now impose additional assumptions on the $G$-action on the level set $\mu_{G}^{-1}(0)$ and recall properties that will be used later. In general, Definition~\ref{def_symplecticstability} implies that
$$
X^s \subseteq X^{ps} \subseteq X^{ss}.
$$ 
When the $G$-action on $\mu_{G}^{-1}(0)$ is locally free, these loci coincide.  

\begin{lemma}[Theorems 4.26, 10.3 in \cite{Hos}]\label{lemma_locallyfreecase}
Suppose that the $G$-action on $\mu_G^{-1}(0)$ is locally free. Then the following statements hold.
\begin{enumerate}
\item $X^s = X^{ps} = X^{ss}$,
\item for every $x \in X^{ps}$, the orbit $\mathbb{G} \cdot x$ intersects $\mu_G^{-1}(0)$ in a single $G$-orbit. In particular, 
$$
X^{ps} = \mathbb{G} \cdot \mu_G^{-1}(0),
$$ 
\item the quotient map $q \colon X^{ss} \to X^{ss} / \mathbb{G} \simeq X \git \mathbb{G}$ is a geometric quotient. 
\end{enumerate}
\end{lemma}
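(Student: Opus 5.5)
The plan is to derive all three assertions from the Kempf--Ness theorem together with the local structure of the moment map near $\mu_G^{-1}(0)$. The key input is the standard convexity/uniqueness property of the Kempf--Ness function. For $x=[v]$, consider
$$
\Psi_v \colon \mathbb{G}/G \to \R,\qquad gG\mapsto \log|g^{-1}v|^2 .
$$
This function is geodesically convex along $t\mapsto \exp(\sqrt{-1}t\xi)$, and its critical points are exactly the $g$ with $g^{-1}x\in\mu_G^{-1}(0)$. Its second derivative in the direction $\xi$ is a positive multiple of $|\xi_{X}(g^{-1}x)|^2$, where $\xi_X$ denotes the infinitesimal action.

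\emph{Step (2).} Take two points $x$ and $gx$ of $\mathbb{G}\cdot x$ that both lie in $\mu_G^{-1}(0)$. By the Cartan decomposition $\mathbb{G}=G\exp(\sqrt{-1}\frak{g})$ we may write $g=k\exp(\sqrt{-1}\xi)$, and replacing $gx$ by $k^{-1}gx$ we may assume $g=\exp(\sqrt{-1}\xi)$. The function $f(t)=\langle\mu_G(\exp(\sqrt{-1}t\xi)x),\xi\rangle$ is the derivative of $\Psi$ along this geodesic, so it is nondecreasing, with
$$
f'(t)=|\xi_X(\exp(\sqrt{-1}t\xi)x)|^2\ge 0 .
$$
Since $f(0)=f(1)=0$, we get $f'\equiv0$ on $[0,1]$, hence $\xi_X(x)=0$. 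Then $\exp(\sqrt{-1}t\xi)$ fixes $x$, so $gx=x$ and the two points lie in the same $G$-orbit. The equality $X^{ps}=\mathbb{G}\cdot\mu_G^{-1}(0)$ is just the definition of $\mu$-polystability.

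\emph{Step (1).} The inclusion $X^s\subseteq X^{ps}$ is formal from the definitions. For the remaining inclusions, local freeness on $\mu_G^{-1}(0)$ means $\frak{g}\to T_xX$, $\xi\mapsto\xi_X(x)$, is injective for $x\in\mu_G^{-1}(0)$. For $J$-invariant reasons, $\sqrt{-1}\frak{g}$ also acts injectively, since $(J\xi)_X=J\xi_X$. So $\frak{g}_\C$ acts injectively, the stabilizer $\mathbb{G}_x$ is discrete, and $X^{ps}\subseteq X^s$. The main obstacle is $X^{ss}\subseteq X^{ps}$, i.e.\ showing that the semistable orbits are already closed. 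The approach is as follows. Since $d\mu_G$ is surjective at points of $\mu_G^{-1}(0)$ (its image is the annihilator of the stabilizer algebra, which is zero), $0$ is a regular value. Moreover, the map
$$
\mathbb{G}\times_G\mu_G^{-1}(0)\to X^{ss}
$$
is a local diffeomorphism near the zero section, by a dimension count together with transversality of $\sqrt{-1}\frak{g}$-orbits to $\mu_G^{-1}(0)$ (the pairing $\langle d\mu_G(J\xi_X),\xi\rangle=|\xi_X|^2>0$). It follows that $\mathbb{G}\cdot\mu_G^{-1}(0)$ is open. It remains to show that any semistable $x$ lies in it. By definition $\overline{\mathbb{G}x}$ meets $\mu_G^{-1}(0)$ at some $y$. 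The open saturated neighbourhood $\mathbb{G}\cdot U$ of $y$ then meets $\mathbb{G}x$, so $x\in\mathbb{G}\cdot\mu_G^{-1}(0)$. Alternatively, I would just invoke the cited results of \cite{Hos} for this step.

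\emph{Step (3).} By (1) every semistable orbit is closed in $X^{ss}$. GIT gives that $q$ is a good quotient, whose fibres contain a unique closed orbit. Hence the fibres of $q$ are exactly the $\mathbb{G}$-orbits, so $q$ is a geometric quotient and $X\git\mathbb{G}\simeq X^{ss}/\mathbb{G}$. This is compatible with Theorem~\ref{theorem_KNthm} via (2).
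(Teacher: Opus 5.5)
There is one faulty inference, in Step (1), in your proof of $X^{ps}\subseteq X^s$. Injectivity of $\frak{g}\to T_xX$ and of $\sqrt{-1}\frak{g}\to T_xX$ separately does not give injectivity on $\frak{g}_\C=\frak{g}\oplus\sqrt{-1}\frak{g}$. It does not exclude $\xi_X(x)+J\eta_X(x)=0$ with $\xi,\eta\neq 0$, and the identity $(\sqrt{-1}\xi)_X=J\xi_X$ says nothing about this.

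The inference genuinely fails off the zero level. Take $\mathrm{SU}(2)$ acting on $\mathbb{P}(\frak{sl}_2\oplus\C^2)$. The point $[(E_{12},e_1)]$ has complex stabilizer algebra $\C E_{12}$, and $E_{12}=\xi+\sqrt{-1}\eta$ with $\xi,\eta\in\frak{su}(2)$ both nonzero. So $\frak{su}(2)$ and $\sqrt{-1}\frak{su}(2)$ both act injectively there, while $\frak{sl}_2$ does not.

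What you need is $\mu_G(x)=0$. By equivariance $\xi_X(x)\in\ker d\mu_G(x)$. So $\xi_X(x)+J\eta_X(x)=0$ puts $J\eta_X(x)$ in $\ker d\mu_G(x)$. The pairing $\langle d\mu_G(J\eta_X),\eta\rangle=|\eta_X|^2$, which you use later anyway, then forces $\eta_X(x)=0$, hence $\eta=0$ and then $\xi=0$. Equivalently, $G$-orbits in $\mu_G^{-1}(0)$ are isotropic, so $\frak{g}\cdot x\perp J\frak{g}\cdot x$. For a torus the issue disappears, but the lemma is stated for arbitrary compact connected $G$. With this repair, $\mathbb{G}_x$ is discrete for $x\in\mu_G^{-1}(0)$, and by conjugation on all of $\mathbb{G}\cdot\mu_G^{-1}(0)$.

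Apart from this, your argument is sound and takes a genuinely different route from the paper. The paper gives no proof; it simply quotes Theorems 4.26 and 10.3 of \cite{Hos}, i.e. Kempf--Ness together with the standard GIT facts on good and geometric quotients. You instead derive everything from convexity of the Kempf--Ness function:
\begin{itemize}
\item Step (2) is correct and rightly does not use local freeness.
\item Openness of $\mathbb{G}\cdot\mu_G^{-1}(0)$ gives $X^{ss}\subseteq X^{ps}$.
\item Step (3) is the standard ``good quotient with all orbits closed is geometric'' argument. The passage from $\mu$-polystable to ``orbit closed in $X^{ss}$'' uses the Kempf--Ness identification, which the paper states.
\end{itemize}
Your openness argument needs $X$ to be smooth along $\mu_G^{-1}(0)$; this is implicit in the paper's use of a K\"ahler form on $X$. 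If you want to avoid it, an orbit-dimension count suffices once stabilizers on $\mu_G^{-1}(0)$ are known to be finite. If $y\in\overline{\mathbb{G}x}\cap\mu_G^{-1}(0)$, then $\dim\mathbb{G}y=\dim\mathbb{G}\geq\dim\mathbb{G}x$. Orbits in $\overline{\mathbb{G}x}\setminus\mathbb{G}x$ have strictly smaller dimension, so $\mathbb{G}y=\mathbb{G}x$.
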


We further assume that the $G$-action on $\mu_{G}^{-1}(0)$ is free. We then consider the reduced symplectic form  $\omega_\mathrm{red}$ on $X \git G$ and the induced complex structure $J_\mathrm{red}$ on $X \git \mathbb{G}$. These structures are compatible via the induced map $\underline{\iota}$ and hence $X \git G \simeq X \git \mathbb{G}$ inherits a K\"{a}hler structure.

\begin{theorem}[Theorems 3.5, 4.5 in \cite{GS82}]\label{proposition_freeGGaction}
Suppose that the $G$-action on $\mu_G^{-1}(0)$ is free. Then the following statements hold. 
\begin{enumerate}
\item the $\mathbb{G}$-action on $X^s = X^{ps} = X^{ss}$ is free and
\item the triple $(X \git G, \omega_\mathrm{red}, \underline{\iota}^*J_\mathrm{red})$ is a K\"{a}hler manifold.
\end{enumerate}
\end{theorem}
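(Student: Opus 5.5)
Part (1) asks for freeness of the $\mathbb{G}$-action on $X^{ss}$. By Lemma~\ref{lemma_locallyfreecase}, freeness on $\mu_G^{-1}(0)$ already gives $X^s = X^{ps} = X^{ss}$ and $X^{ss} = \mathbb{G}\cdot \mu_G^{-1}(0)$. Since stabilizers are conjugate along a $\mathbb{G}$-orbit, it suffices to show $\mathbb{G}_x$ is trivial for $x \in \mu_G^{-1}(0)$. The key input will be the standard fact that for $x \in \mu_G^{-1}(0)$ the stabilizer $\mathbb{G}_x$ is the complexification of $G_x$ (the Matsushima-type statement). The plan is to prove this via the polar decomposition $\mathbb{G} = G \cdot \exp(\sqrt{-1}\frak{g})$:
\begin{enumerate}
\item Write $g = k\exp(\sqrt{-1}\xi)$ with $g\cdot x = x$.
\item Then $\exp(\sqrt{-1}\xi)\cdot x = k^{-1}x$ lies in $\mu_G^{-1}(0)$, because $\mu_G$ is $G$-equivariant.
\item Consider $f(t)=\langle \mu_G(\exp(\sqrt{-1}t\xi)x),\xi\rangle$. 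This function is monotone: its derivative is $|\xi_X|^2 \ge 0$ up to sign, by the Kähler identity $d\langle\mu,\xi\rangle = \omega(\xi_X,\cdot)$ applied to $J\xi_X$.
\item Since $f(0)=f(1)=0$, monotonicity forces $\xi_X = 0$ along the path. Hence $\exp(\sqrt{-1}\xi)x = x$, so $k\in G_x=\{1\}$, and $\xi$ lies in the Lie algebra of $G_x$, which is $0$.
\end{enumerate}
Thus $g=1$. I expect this convexity/monotonicity argument to be the main obstacle: one must check that freeness of $G$ forces $\xi=0$, and not only $\xi_X=0$ at one point. It does, since $\xi_X(x)=0$ means $\xi$ is in the isotropy algebra of $x$, which is trivial.

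Part (2) asks for a Kähler structure on $X\git G$. The plan is to construct $\omega_{\mathrm{red}}$ by Marsden--Weinstein: $0$ is a regular value by freeness, $\mu_G^{-1}(0)$ is a smooth submanifold, and $\omega|_{\mu_G^{-1}(0)}$ descends to a symplectic form on the quotient.

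For the complex structure, part (1) shows that $\mathbb{G}$ acts freely and properly on $X^{ss}$. Properness follows from Lemma~\ref{lemma_locallyfreecase}(3), the geometric quotient statement, together with the fact that $\mathbb{G}\cdot x$ is closed for polystable $x$. Hence $X\git\mathbb{G} = X^{ss}/\mathbb{G}$ is a complex manifold, by the holomorphic slice theorem or Luna's étale slice theorem, and $q$ is holomorphic.

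It remains to identify the tangent spaces. At $x\in\mu_G^{-1}(0)$, I will use the orthogonal decomposition
$$T_xX = H_x \oplus \frak{g}_X(x) \oplus J\frak{g}_X(x),$$
where $H_x = T_x\mu_G^{-1}(0)\cap (\frak{g}_X(x))^{\perp}$ is $J$-invariant. This decomposition holds because $T_x\mu_G^{-1}(0) = (J\frak{g}_X)^{\perp_g}$.

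Then $d\underline{\iota}$ identifies $H_x$ both with $T(X\git G)$ and with $T_xX^{ss}/\frak{g}^{\C}_X(x) = T(X\git\mathbb{G})$. This shows that $\underline{\iota}$ of Theorem~\ref{theorem_KNthm} is a diffeomorphism, and that $\underline{\iota}^*J_{\mathrm{red}}$ is the restriction of $J$ to $H_x$.

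Compatibility and positivity then follow immediately, since $\omega_{\mathrm{red}}$ is also the restriction of $\omega$ to $H_x$, and $\omega(\cdot,J\cdot)$ is positive on the $J$-invariant subspace $H_x$. Integrability of the descended $J$ is inherited from the holomorphic quotient. So the triple is Kähler.
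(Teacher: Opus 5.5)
Your proposal is correct. The paper gives no argument of its own here and simply cites Guillemin--Sternberg, and your proof is essentially the standard one found there. Part (1) rests on the monotonicity of $t\mapsto\langle\mu_G(\exp(\sqrt{-1}t\xi)\cdot x),\xi\rangle$, whose derivative is $\pm|\xi_X|^2$: this kills the noncompact part of the stabilizer and reduces freeness of $\mathbb{G}$ on $X^{ss}=\mathbb{G}\cdot\mu_G^{-1}(0)$ to freeness of $G$ on $\mu_G^{-1}(0)$. Part (2) rests on the orthogonal splitting $T_xX=H_x\oplus\frak{g}_X(x)\oplus J\frak{g}_X(x)$ at $x\in\mu_G^{-1}(0)$. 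It identifies the $J$-invariant space $H_x$ with both $T(X\git G)$ and $T(X\git\mathbb{G})$. This makes $\underline{\iota}$ a diffeomorphism and shows that $\omega_{\mathrm{red}}$ and $\underline{\iota}^*J_{\mathrm{red}}$ are the restrictions of $\omega$ and $J$ to $H_x$.

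The one loose step is properness. A ``geometric quotient with closed orbits'' is not by itself a proof that the action map is proper. You can either quote the standard GIT fact that a reductive group acts properly on the stable locus (closed orbits, finite stabilizers), or avoid properness altogether. For the latter, note that $X\git\mathbb{G}$ is already a Hausdorff projective variety. Luna's étale slice theorem at closed orbits with trivial stabilizer then shows directly that it is smooth and that $q$ is a holomorphic principal $\mathbb{G}$-bundle.
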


By abuse of notation, we denote by $\omega_\mathrm{red}$ and $J_\mathrm{red}$ the reduced symplectic form and the induced complex structure on both $X \git G$ and $X \git \mathbb{G}$. By Lemma~\ref{lemma_locallyfreecase} and Theorem~\ref{proposition_freeGGaction}, after identifying the left $G$-action with a right $G$-action by setting $x * g = g^{-1} * x$, the quotient map $q \colon X^{ss} \to X \git \mathbb{G}$ becomes a holomorphic principal $\mathbb{G}$-bundle. We also have a principal $G$-bundle $q_G \colon\mu^{-1}_G(0) \to X \git G$. These two bundles are related through the inclusion and the induced isomorphism in Theorem~\ref{theorem_KNthm}. We have the commutative diagram.
\begin{equation}
\xymatrix{
\mu_G^{-1}(0) \ar[r]^{\iota} \ar[d]_{q_G} & X^{ss} \ar[d]^q\\
X \git G \ar[r]^{\underline{\iota}}& X \git \mathbb{G}
}
\end{equation}
Moreover, the principal $\mathbb{G}$-bundle $q$ is an extension of $q_G$ via the following map
$$
\mu_G^{-1}(0) \times_G \mathbb{G} \simeq X^{ss}, \quad (x, g) \mapsto x * g.
$$

\begin{proposition}\label{proposition_principalGbundle}
The principal $\mathbb{G}$-bundle $q$ is an extension of $q_G$ by the inclusion $G \to \mathbb{G}$.
\end{proposition}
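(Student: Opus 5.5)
We have to show that the map
$$
\Psi \colon \mu_G^{-1}(0) \times_G \mathbb{G} \to X^{ss}, \quad [x,g] \mapsto x * g,
$$
is a well-defined isomorphism of principal $\mathbb{G}$-bundles over $X \git G \simeq X \git \mathbb{G}$ (identified via $\underline{\iota}$), covering $\underline{\iota}$.

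\textbf{Step 1: $\Psi$ is a well-defined bundle morphism.} Recall that $\mu_G^{-1}(0)\times_G \mathbb{G}$ is the quotient of $\mu_G^{-1}(0)\times\mathbb{G}$ by $(x,g)\sim(x*h,h^{-1}g)$ for $h\in G$. Then $\Psi$ is well defined because $(x*h)*(h^{-1}g)=x*g$. It is $\mathbb{G}$-equivariant for right multiplication on the second factor. Its image lies in $X^{ss}$, since $X^{ss}$ is $\mathbb{G}$-invariant and contains $\mu_G^{-1}(0)$ by Definition~\ref{def_symplecticstability} and the Kempf--Ness theorem. Finally, $\Psi$ covers $\underline{\iota}$: $q(x*g)=q(x)=\underline{\iota}(q_G(x))$ by the commutative diagram. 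Smoothness is clear, because $\Psi$ descends from the smooth action map $\mu_G^{-1}(0)\times\mathbb{G}\to X^{ss}$.

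\textbf{Step 2: $\Psi$ is bijective.} A $\mathbb{G}$-equivariant morphism of principal $\mathbb{G}$-bundles covering a homeomorphism of the bases is automatically bijective: on each fibre it is an equivariant map between $\mathbb{G}$-torsors. I will nevertheless check the two halves directly.

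\emph{Surjectivity.} By Lemma~\ref{lemma_locallyfreecase}(1),(2), $X^{ss}=X^{ps}=\mathbb{G}\cdot\mu_G^{-1}(0)$, so every $y\in X^{ss}$ can be written as $x*g$ with $x\in\mu_G^{-1}(0)$.

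\emph{Injectivity.} Suppose $x*g=x'*g'$. Then $x' = x*(g g'^{-1})$, so $x$ and $x'$ lie in a common $\mathbb{G}$-orbit. By Lemma~\ref{lemma_locallyfreecase}(2), that orbit meets $\mu_G^{-1}(0)$ in a single $G$-orbit, so $x'=x*h$ for some $h\in G$. Freeness of the $\mathbb{G}$-action (Theorem~\ref{proposition_freeGGaction}(1)) then gives $h=gg'^{-1}$. Hence $(x',g')=(x*h,h^{-1}g)\sim(x,g)$.

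\textbf{Step 3: $\Psi$ is a diffeomorphism.} This is the main technical point. Since both sides are principal $\mathbb{G}$-bundles and $\Psi$ is equivariant and covers the diffeomorphism $\underline{\iota}$, it suffices to work in local trivializations. Over a small open set $U\subseteq X\git G$, pick a local section $s$ of $q_G$. Then $\underline{\iota}\circ s\circ\underline{\iota}^{-1}$ gives a local section of $q$, and in these trivializations $\Psi$ becomes
$$
(u,g)\mapsto (\underline{\iota}(u),g),
$$
which is manifestly a diffeomorphism.

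The one ingredient still to justify is that $\underline{\iota}$ is a diffeomorphism, not merely the homeomorphism of Theorem~\ref{theorem_KNthm}. I would establish this with a dimension count and an infinitesimal argument. At $x\in\mu_G^{-1}(0)$ with $G$ acting freely, $d\mu_G$ is surjective, and
$$
T_x\mu_G^{-1}(0)\cap J\frak{g}\cdot x=0,
$$
because $\omega(\xi x, J\eta x)=\langle d\mu(J\eta x),\xi\rangle$ and the metric is positive definite. Consequently $T_xX = T_x\mu_G^{-1}(0)\oplus J\frak{g}x$, so the action map $\mu_G^{-1}(0)\times_G\mathbb{G}\to X^{ss}$ has injective differential. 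The two sides have equal dimension, so the differential is invertible. Combined with bijectivity from Step 2, this makes $\Psi$ a diffeomorphism directly, and $\underline{\iota}$ follows as the induced map on quotients.
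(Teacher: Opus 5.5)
Your proposal is correct and follows the paper's route: the paper simply exhibits the map $[x,g]\mapsto x*g$ and relies on Lemma~\ref{lemma_locallyfreecase}(2) together with the freeness in Theorem~\ref{proposition_freeGGaction}(1), which is exactly your Step~2. Your Step~3 supplies the smoothness that the paper takes for granted from Guillemin--Sternberg. Two small points there: the local section should read $\iota\circ s\circ\underline{\iota}^{-1}$, and once you have the splitting $T_xX=T_x\mu_G^{-1}(0)\oplus J\frak{g}x$ together with $\mathbb{G}$-equivariance, the local-trivialization reduction becomes redundant.
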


Suppose that $L \subseteq \mu_G^{-1}(0)$ is a $G$-invariant Lagrangian submanifold. Then the $G$-action on $L$ is free and the quotient $L / G \eqqcolon K$ is a smooth Lagrangian submanifold in $X \git G$. Moreover, the restriction 
$$
q_G |_L \colon L \to K
$$
is a principal $G$-bundle. 

For later use, we record the following proposition. By the Cartan decomposition, there is a diffeomorphism $G \times \frak{g} \to \mathbb{G}$ given by $(g, \xi) \mapsto g \exp( \sqrt{-1} \xi)$. This immediately implies the following proposition.

\begin{proposition}\label{prop_relativehomotoypgroup2}
Let $\mathbb{G}$ be the complexification of a compact connected Lie group $G$. Then 
$$
\pi_2(\mathbb{G}, G) = \{0\}.
$$
\end{proposition}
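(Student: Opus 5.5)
We deduce $\pi_2(\mathbb{G}, G) = \{0\}$ from the Cartan decomposition. The map $G \times \frak{g} \to \mathbb{G}$, $(g,\xi) \mapsto g \exp(\sqrt{-1}\xi)$, is a diffeomorphism, and it carries $G \times \{0\}$ onto the subgroup $G \subseteq \mathbb{G}$. Hence the pair $(\mathbb{G}, G)$ is diffeomorphic, as a pair of spaces, to $(G \times \frak{g}, G \times \{0\})$. Since $\frak{g}$ is a real vector space, it is contractible.

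First, the straight-line homotopy
$$
H_t(g,\xi) = (g, (1-t)\xi), \quad t \in [0,1],
$$
is a strong deformation retraction of $G \times \frak{g}$ onto $G \times \{0\}$. It fixes $G \times \{0\}$ pointwise at all times, so it is a deformation retraction of pairs.

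It follows that the inclusion $G \hookrightarrow \mathbb{G}$ is a homotopy equivalence. In particular, it induces isomorphisms $\pi_k(G) \to \pi_k(\mathbb{G})$ for all $k$, and a bijection on $\pi_0$, since both spaces are connected.

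Next, we feed this into the long exact sequence of homotopy groups of the pair, based at the identity $e$:
$$
\pi_2(G) \to \pi_2(\mathbb{G}) \to \pi_2(\mathbb{G}, G) \to \pi_1(G) \to \pi_1(\mathbb{G}).
$$
The map $\pi_2(G) \to \pi_2(\mathbb{G})$ is surjective, so $\pi_2(\mathbb{G}) \to \pi_2(\mathbb{G},G)$ is the zero map. Exactness at $\pi_2(\mathbb{G},G)$ then shows that $\pi_2(\mathbb{G},G) \to \pi_1(G)$ is injective. Its image is the kernel of $\pi_1(G) \to \pi_1(\mathbb{G})$, which is trivial because this map is injective. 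Hence $\pi_2(\mathbb{G},G) = \{0\}$.

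As a more direct check, any map $u \colon (D^2, \partial D^2) \to (\mathbb{G}, G)$ becomes null in the relative group once composed with $H_1$. The composite lands in $G$, and since $H_t$ preserves $G$, this gives a homotopy of pairs to a map with image in $G$. Such a map represents the trivial element of $\pi_2(\mathbb{G},G)$.

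Every step is formal. The only substantive input is the Cartan (polar) decomposition, which the paper quotes, together with the observation that it identifies $G$ with $G \times \{0\}$. Because $\frak{g}$ is contractible, no obstacle is expected.
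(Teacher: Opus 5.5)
Your proof is correct and follows the same route as the paper. The paper just cites the Cartan decomposition $G \times \frak{g} \to \mathbb{G}$, $(g,\xi)\mapsto g\exp(\sqrt{-1}\xi)$, and says the vanishing of $\pi_2(\mathbb{G},G)$ follows immediately; you make that step explicit via the deformation retraction onto $G\times\{0\}$ and the long exact sequence of the pair (or, equivalently, the compression criterion).
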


\begin{remark}
In this section, we have focused on projective GIT quotients arising from Hamiltonian group actions on a projective smooth variety $X$. However, the correspondence of holomorphic disks studied in Section~\ref{sec_holodisksprinp} depends only on the structure of a principal $G$-bundle and its extension to a principal $\mathbb{G}$-bundle. Consequently, the correspondence extends to more general GIT settings including affine GIT quotients. For instance, let $\mathfrak{L} \to X$ be a holomorphic line bundle over a variety $X$ equipped with a $\mathbb{G}$-linearization. The associated GIT quotient $X \git_\mathfrak{L} \mathbb{G}$ is defined with respect to this linearization. Suppose that the quotient map $X^{s}(\mathfrak{L}) \to X \git_\mathfrak{L} \mathbb{G}$ is a principal $\mathbb{G}$-bundle as an extension of the principal $G$-bundle defined by the level set of a central element of the moment map. Then the correspondence of holomorphic disks continues to hold. 
\end{remark}

\section{Holomorphic disks and principal bundles}\label{sec_holodisksprinp}

Let $\mathbb{G}$ be a reductive algebraic group over $\C$ and let $G$ denote its compact real form. Suppose that $\mathbb{G}$ acts linearly on a complex vector space $V$. Let $X \subseteq \mathbb{P}(V)$ be a $\mathbb{G}$-invariant projective variety and $L \subseteq \mu^{-1}_G(0)$ a $G$-invariant Lagrangian submanifold of $X$. 

The main goal of Sections~\ref{sec_holodisksprinp} and~\ref{sec_diskpotentialGIT} is to study the relationship between moduli spaces of holomorphic disks in $X$ with boundary on $L$ and those in the GIT quotient $X \git \mathbb{G}$ with boundary on $L / G$. 

Our approach proceeds in two steps as in~\eqref{equation_twostep}. In the present section, we investigate a correspondence between moduli spaces of $(X^{ss}, L)$ and $(X \git \mathbb{G}, L /G)$ via the quotient map $q$. In the next section, we compare moduli spaces of $(X, L)$ and $(X^{ss}, L)$ via the inclusion $\iota$.
\begin{equation}\label{equation_twostep}
\xymatrix{
(X, L) & (X^{ss}, L)  \ar[l]_\iota \ar[d]^q \\\
& (X \git \mathbb{G}, L /G)}
\end{equation}

Motivated by Proposition~\ref{proposition_principalGbundle}, we work in the following slightly more general setting. Let $q \colon P_\mathbb{G} \to M$ be a holomorphic principal $\mathbb{G}$-bundle  between K\"{a}hler manifolds $P_\mathbb{G}$ and $M$. We denote by $\omega$ (resp. $\omega_M$) the K\"{a}hler form on $P_\mathbb{G}$ (resp. $M$) and by $J$ (resp. $J_M$) the complex structure on $P_\mathbb{G}$ (resp. $M$). Throughout this section, we impose the following assumption.

\begin{assumption}\label{assumption_pgpmathbbg}
Suppose that $P_\mathbb{G}$ contains a submanifold $P_G$  such that
\begin{enumerate}
\item the restriction $q |_{P_G} \colon P_G \to M$ is a principal $G$-bundle,
\item the principal $\mathbb{G}$-bundle $P_\mathbb{G}$ is isomorphic to the extension of $P_G$ to $\mathbb{G}$, i.e., $P_\mathbb{G} \simeq P_G \times_G \mathbb{G}$,
\item the K\"{a}hler forms satisfy $q^* \omega_{M} = \omega |_{P_G}$. 
\end{enumerate}
\end{assumption}

Let $L \, (\subseteq P_G)$ be a $G$-invariant oriented Lagrangian submanifold of $P_\mathbb{G}$. Then the restriction 
$$
q |_{L} \colon L \to L / G \eqqcolon K
$$ 
is a principal $G$-bundle. For simplicity, we denote both $q |_L$ and $q |_{P_G}$ again by $q$. Note that, in the situation of~\eqref{equation_twostep}, $P_\mathbb{G}$, $P_G$, and $M$ correspond to $X^{ss}$, $\mu_G^{-1}(0)$, and $X \git \mathbb{G}$, respectively. 

In this setting, we compare moduli spaces of holomorphic disks in $M$ with boundary on $K$ and those in $P_\mathbb{G}$ with boundary on $L$. 

We begin with preliminary lemmas. The following lemma is also proven in \cite[Proposition 3.2]{Smi21}. We include the proof for the reader's convenience.

\begin{lemma}\label{lemma_isoonhomotopy}
The map $q \colon P_\mathbb{G} \to M$ induces an isomorphism $q_* \colon \pi_2 (P_\mathbb{G}, L) \simeq \pi_2(M, K)$.
\end{lemma}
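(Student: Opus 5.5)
We will compare the long exact sequences of homotopy groups of the pairs $(P_\mathbb{G}, L)$ and $(M, K)$ via the five lemma. The map $q$ sends $L$ into $K$, so it induces a morphism of pairs $(P_\mathbb{G}, L) \to (M, K)$. Consequently, it induces a commutative ladder between
$$
\pi_2(L) \to \pi_2(P_\mathbb{G}) \to \pi_2(P_\mathbb{G}, L) \to \pi_1(L) \to \pi_1(P_\mathbb{G})
$$
and the corresponding sequence for $(M,K)$. Since $\pi_1$ of a space is only a group, and $\pi_1(P_\mathbb{G},L)$ is merely a pointed set, we use the version of the five lemma valid in this range. Alternatively, and more cleanly, we avoid the five lemma altogether by using fibrations, as follows.

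We view $q \colon (P_\mathbb{G}, L) \to (M,K)$ as a map of fibrations. The bundle $P_\mathbb{G} \to M$ is a fibration with fiber $\mathbb{G}$, and $L \to K$ is a fibration with fiber $G$. By Assumption~\ref{assumption_pgpmathbbg}(2), $P_\mathbb{G} \simeq P_G \times_G \mathbb{G}$. We may take $L \subseteq P_G$, so that over each point of $K$ the fiber pair is $(\mathbb{G}, G)$ up to translation. There is then a relative fibration sequence, giving a long exact sequence
$$
\cdots \to \pi_2(\mathbb{G}, G) \to \pi_2(P_\mathbb{G}, L) \xrightarrow{q_*} \pi_2(M, K) \to \pi_1(\mathbb{G}, G) \to \cdots.
$$
This is the standard long exact sequence for a map of pairs which is a fibration on both pieces, obtained by identifying $\pi_n(P_\mathbb{G}, L)$ with $\pi_{n-1}$ of the homotopy fiber of $L \to P_\mathbb{G}$ and comparing with $K \to M$.

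By Proposition~\ref{prop_relativehomotoypgroup2}, $\pi_2(\mathbb{G}, G) = 0$. Moreover, the Cartan decomposition $\mathbb{G} \cong G \times \mathfrak{g}$ shows that $G \hookrightarrow \mathbb{G}$ is a homotopy equivalence. Hence $\pi_n(\mathbb{G}, G) = 0$ for all $n$, in particular for $n = 1, 2, 3$. Exactness then gives that $q_*$ is a bijection. It is a group isomorphism because it is induced by a map of pairs.

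For a hands-on check, and to handle the low-degree pointed-set issues, we argue directly. For surjectivity, given a disk $u \colon (D, \partial D) \to (M, K)$, pull back $L \to K$ along $u|_{\partial D}$. Extend to the pullback of $P_\mathbb{G}$ over $D$, which is trivial since $D$ is contractible. A section over $\partial D$ landing in the $G$-subbundle extends over $D$, because the extension problem is governed by $\pi_1(\mathbb{G}) \cong \pi_1(G)$, and the restriction of the $G$-reduction class is compatible. Equivalently, use the homotopy lifting property after deforming the fiber $\mathbb{G}$ onto $G$. For injectivity, lift a homotopy of disks in the same manner, using $\pi_2(\mathbb{G},G)=0$ and $\pi_3(\mathbb{G},G)=0$.

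The main obstacle is setting up the relative fibration exact sequence rigorously, i.e.\ checking that $q\colon (P_\mathbb{G}, L) \to (M,K)$ has the relative homotopy lifting property. I would deal with this by deformation-retracting $P_\mathbb{G}$ onto $P_G$ fiberwise via the Cartan decomposition, $[x,g\exp(\sqrt{-1}\xi)] \mapsto [x, g\exp(\sqrt{-1}t\xi)]$. This reduces the statement to $q\colon (P_G, L) \to (M,K)$, where $L = q^{-1}(K)\cap P_G$ is the full preimage. For such a pair, the isomorphism $\pi_n(P_G, q^{-1}(K)) \cong \pi_n(M, K)$ is the standard fact for fibrations.
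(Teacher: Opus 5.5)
Your proof is correct, but the argument that actually carries it is your last paragraph, and it takes a different route from the paper's.

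\textbf{The paper's route.} It stays at the level of exact sequences. It first uses the five lemma to get $\pi_{j+1}(L,G\cdot p)\cong\pi_{j+1}(K)$ and $\pi_{j+1}(P_\mathbb{G},\mathbb{G}\cdot p)\cong\pi_{j+1}(M)$. It then applies the five lemma again to a ladder. The top row of that ladder is essentially the sequence of the triple $(P_\mathbb{G},L,G\cdot p)$, with $\pi_\ast(P_\mathbb{G},G\cdot p)$ tacitly identified with $\pi_\ast(P_\mathbb{G},\mathbb{G}\cdot p)$. The bottom row is the sequence of $(M,K)$.

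\textbf{Your route.} You make a single geometric reduction instead. The retraction $[x,g\exp(\sqrt{-1}\xi)]\mapsto[x,g\exp(\sqrt{-1}t\xi)]$ is well defined on $P_G\times_G\mathbb{G}$, because left multiplication by $h^{-1}\in G$ does not change $\xi$. It commutes with $q$ and fixes $P_G\supseteq L$ pointwise. So $q_*$ on $\pi_2(P_\mathbb{G},L)$ is identified with $q_*$ on $\pi_2(P_G,L)$. Since $L$ is $G$-invariant, $L=(q|_{P_G})^{-1}(K)$. The relative lifting lemma $\pi_n(E,p^{-1}(B'))\cong\pi_n(B,B')$, applied to the fibration $P_G\to M$, then finishes the argument.

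\textbf{Comparison.} Your route gives the bijection uniformly for all $n\ge 1$. It avoids the pointed-set bookkeeping that the five lemma needs at the $\pi_1$ end. It also makes explicit where the extension structure $P_\mathbb{G}\simeq P_G\times_G\mathbb{G}$ and the homotopy equivalence $G\simeq\mathbb{G}$ enter; the paper uses these only implicitly. The paper's route, in exchange, uses only the fibration, pair and triple long exact sequences as black boxes. Yours quotes the equally standard version of the lifting lemma relative to a subspace $K\subseteq M$.

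\textbf{What you can drop.} Running the five lemma directly on the ladder of the two pair sequences would fail. The outer vertical maps, such as $\pi_1(L)\to\pi_1(K)$ and $\pi_2(L)\to\pi_2(K)$, are not isomorphisms in general. The relative fibration sequence with fiber pair $(\mathbb{G},G)$ is true, but as you note it needs its own justification. The ``hands-on'' injectivity sketch is too vague to stand alone. None of this is needed once the last paragraph is in place.
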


\begin{proof}
Consider the long exact sequence of homotopy groups for the principal $G$-bundle $q \colon L \to K$. Fix a point $p \in L$. We also consider the long exact sequence of homotopy groups associated to the pair $(L, G \cdot p)$. By aligning up these long exact sequences, we obtain the following commutative diagram
\begin{equation*}
\xymatrix{
\pi_{j+1}(G \cdot p) \ar[r] \ar[d]^{\simeq} & \pi_{j+1}(L) \ar[r] \ar[d]^{\simeq}  & \pi_{j+1}(L, G \cdot p) \ar[r] \ar[d]_{q_*}  & \pi_j(G \cdot p)  \ar[r] \ar[d]^{\simeq}  & \pi_j(L) \ar[d]^{\simeq}\\
\pi_{j+1}(G) \ar[r] & \pi_{j+1}(L) \ar[r] &  \pi_{j+1}(K) \ar[r]  & \pi_j (G) \ar[r] &  \pi_j (L).
}
\end{equation*}
By the five lemma, the induced homomorphism $q_* \colon \pi_{j+1}(L, G \cdot p) \simeq \pi_{j+1}(K)$ is an isomorphism. By applying the same argument to the principal $\mathbb{G}$-bundle $q \colon P_\mathbb{G} \to M$, we check that $q_* \colon \pi_{j+1}(P_\mathbb{G}, \mathbb{G} \cdot p) \simeq \pi_{j+1}(M)$ is an isomorphism. 

Using the above isomorphisms, we obtain the following commutative diagram
\begin{equation}\label{equ_fivelemma}
\xymatrix{
\pi_2(L, G \cdot p) \ar[r] \ar[d]^{\simeq}_{q_*} & \pi_2(P_\mathbb{G}, \mathbb{G} \cdot p) \ar[r] \ar[d]^{\simeq}_{q_*}  & \pi_2(P_\mathbb{G}, L) \ar[r] \ar[d]_{q_*}  & \pi_1(L, G \cdot p)  \ar[r] \ar[d]^{\simeq}_{q_*}  & \pi_1(P_\mathbb{G},G\cdot p) \ar[d]^{\simeq}_{q_*}\\
\pi_2(K) \ar[r] & \pi_2(M) \ar[r] &  \pi_2(M, K) \ar[r]  & \pi_1 (K) \ar[r] &  \pi_1 (M)
}
\end{equation}
Again by the five lemma, we conclude that the induced map $q_* \colon \pi_2 (P_\mathbb{G}, L) \simeq \pi_2(M, K)$ is an isomorphism. 
\end{proof}

\begin{lemma}\label{lemma_qregular}
Let $\beta \in \pi_2(P_{\mathbb{G}},L)$ be a relative homotopy class. Then the following hold.\begin{enumerate}
\item
the homotopy class $\beta$ is regular if and only if  $q_* \beta$ is regular.
\item
the Maslov index of $\beta$ is equal to that of $q_* \beta$.
\end{enumerate}
\end{lemma}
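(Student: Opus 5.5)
Both parts will be proved by splitting the linearized problem along the bundle $q$. Let $\varphi \colon (D^2, \partial D^2) \to (P_\mathbb{G}, L)$ represent $\beta$ and set $u = q \circ \varphi$. Since $q$ is a holomorphic submersion, we have a short exact sequence of holomorphic vector bundles over $D^2$,
$$
0 \to \varphi^* V \to \varphi^* TP_\mathbb{G} \to u^* TM \to 0,
$$
where $V = \ker dq$ is the vertical bundle. Because $q$ is a principal $\mathbb{G}$-bundle, $V$ is holomorphically trivialized by the infinitesimal action, $V \cong P_\mathbb{G} \times \mathfrak{g}_\C$ with $\mathfrak{g}_\C = \frak{g} \otimes \C$. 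Along the boundary, $T_xL$ contains the orbit directions $\frak{g} \cdot x$, and $dq$ maps $T_xL$ onto $T_{q(x)}K$ with kernel exactly $\frak{g}\cdot x$ (because $L \to K$ is a principal $G$-bundle). We therefore get a short exact sequence of totally real boundary conditions,
$$
0 \to \frak{g} \to T L \to q^* TK \to 0,
$$
compatible with the sequence above. The boundary condition on the vertical part is the constant totally real subspace $\frak{g} \subset \mathfrak{g}_\C$.

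For (2), the Maslov index is additive in short exact sequences of bundle pairs, so $\mu(\beta) = \mu(\varphi^*V, \frak{g}) + \mu(u^*TM, u|_{\partial}^*TK)$. The first bundle pair is trivial with constant boundary condition, so its Maslov index is $0$. This gives $\mu(\beta) = \mu(q_*\beta)$. To be careful, I would split the sequences smoothly: a connection complement gives $\varphi^*TP_\mathbb{G} \cong \varphi^*V \oplus u^*TM$ as complex bundles. Along $L \subseteq P_G$ one can take the complement $(\frak{g} \cdot x)^{\perp_\omega} \cap T_xP_G$, which by item (3) of Assumption~\ref{assumption_pgpmathbbg} is mapped isomorphically onto $TM$ and contains a complement of $\frak{g}\cdot x$ in $T_xL$. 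Homotoping this boundary splitting to an interior splitting is harmless, since Maslov indices are homotopy invariant.

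For (1), I would compare the linearized operators. The operator $D_\varphi$ for $\bar\partial$ on $(\varphi^*TP_\mathbb{G}, TL)$ fits into an exact sequence of Fredholm operators
$$
0 \to D_{\mathrm{vert}} \to D_\varphi \to D_u \to 0.
$$
This holds because $q$ is holomorphic and the vertical subbundle is a holomorphic subbundle preserved by the boundary conditions. Here $D_{\mathrm{vert}}$ is the standard $\bar\partial$ on the trivial bundle $\mathfrak{g}_\C$ with constant boundary $\frak{g}$. Its kernel is the constants $\frak{g}$ (the infinitesimal $G$-action), and its cokernel vanishes, since the Maslov index is $0$, the bundle splits into rank one pieces of index $0$, and $\bar\partial$ is surjective there. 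The snake lemma then gives $\mathrm{coker}\, D_\varphi \cong \mathrm{coker}\, D_u$, together with an exact sequence $0 \to \frak{g} \to \ker D_\varphi \to \ker D_u \to 0$. So $D_\varphi$ is surjective if and only if $D_u$ is. Applying this to every $\varphi$ in class $\beta$ gives one direction of (1).

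The other direction is where I expect the main obstacle. Regularity of $\beta$ concerns all disks in class $\beta$, and regularity of $q_*\beta$ concerns all disks in $q_*\beta$, so deducing the regularity of $\beta$ from that of $q_*\beta$ requires every disk in $q_*\beta$ to lift to a disk in class $\beta$. I would obtain this lift from the lifting argument via Donaldson's Dirichlet problem mentioned in the introduction: trivialize $u^*P_\mathbb{G}$ holomorphically with boundary in $u|_\partial^* P_G$. Alternatively, I would phrase (1) pointwise, as ``$\varphi$ is regular if and only if $q\circ\varphi$ is regular'', and combine it with surjectivity of $\phi$ later. The identification of the vertical operator and the exactness of the sequence are routine; the substantive input is this lifting.
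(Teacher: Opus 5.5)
Your argument is correct and is essentially the paper's proof: the same short exact sequences of bundle pairs, additivity of the Maslov index with the trivial vertical pair $(\mathfrak{g}_\C,\mathfrak{g})$ contributing $0$, and $\mathrm{coker}\,D_\varphi\cong\mathrm{coker}\,D_{q\circ\varphi}$ from the vanishing of the vertical cokernel. The paper phrases that last step through the Katz--Liu sheaf-cohomology criterion rather than a snake lemma; your ordering, with the vertical pair as the subobject, is the right one, while the paper's displayed sequence lists the terms in reverse but uses the same key input, $H^1$ of the vertical pair vanishing.

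There are two slips, neither of which affects the argument.

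First, you have the directions of (1) swapped. The lift-free direction is ``$q_*\beta$ regular $\Rightarrow$ $\beta$ regular''. It is the converse that needs Proposition~\ref{proposition_existenceoflift}, with the lift lying in $\beta$ by Lemma~\ref{lemma_isoonhomotopy}. This class-level point is one the paper's proof leaves implicit.

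Second, your explicit complement fails as written. Assumption~\ref{assumption_pgpmathbbg}(3) and a dimension count give $T_xP_G=(\mathfrak{g}\cdot x)^{\perp_\omega}$, so your proposed complement is all of $T_xP_G$. You want the metric complement $(\mathfrak{g}_\C\cdot x)^{\perp}$ instead, although additivity of the Maslov index needs no explicit splitting at all.
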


\begin{proof}
Let $VP_\mathbb{G}$ (resp. $VP_G$) be the vertical subbundle of $TP_\mathbb{G}$ (resp. $TP_{G}$). Recall that each fiber of $VP_\mathbb{G}$ (resp. $VP_G$) is generated by the fundamental vector fields of the $\mathbb{G}$-action (resp. ${G}$-action). 
We obtain a short exact sequence of complex vector bundles over $P_\mathbb{G} \colon$
\begin{equation}\label{equ_bundleses}
0 \to VP_{\mathbb{G}} \to TP_{\mathbb{G}} \to q^* TM \to 0
\end{equation}
and a short exact sequence of real subbundles along $L \colon$
\begin{equation}\label{equ_realbundleses}
0 \to VL \to TL \to q^* TK \to 0.
\end{equation}

Take a smooth map $\varphi \colon (\mathbb{D}, \partial \mathbb{D}) \to (P_\mathbb{G}, L)$ representing $\beta$. Then $q \circ \varphi$ represents $q_* \beta$. Pulling back~\eqref{equ_bundleses} and~\eqref{equ_realbundleses} along $\varphi$, we have a short exact sequence of bundle pairs over $(\mathbb{D}, \partial \mathbb{D})$
\begin{equation}\label{equ_sesofbundlepairs}
\xymatrix{
0 \ar[r] & (VP_{\mathbb{G}}, VL) \ar^{\iota}[r] & (TP_{\mathbb{G}}, TL) \ar[r] &  (q^* TM, q^*TK) \ar[r] & 0.
}
\end{equation}
It leads to the following Maslov index formula$\colon$
$$
\mu(\varphi^* TP_\mathbb{G}, \varphi^* TL) = \mu(\varphi^* VP_\mathbb{G}, \varphi^* VL) + \mu((q \circ \varphi)^*TM, (q \circ \varphi)^*TK).
$$
The vertical bundle $VP_\mathbb{G}$ is trivial as a complex vector bundle via the fundamental vector fields, that is, $VP_\mathbb{G} \simeq P_\mathbb{G} \times \frak{g}_{\C}$. Moreover, this trivialization restricts to one of $VP_G \simeq P_G \times \frak{g}$ and $VL \simeq L \times \frak{g}$. Consequently, the bundle pair $(\varphi^* VP_\mathbb{G}, \varphi^* VL)$ is trivial and therefore 
$$
\mu(\varphi^* VP_\mathbb{G}, \varphi^* VL) = 0.
$$
This proves $(2)$.

We now prove~$(1)$. Recall the sheaf-theoretic criterion for regularity from \cite[Section 3.4]{KL01}. Let $(E,F)$ be a Riemann--Hilbert bundle over $(\mathbb{D}, \partial \mathbb{D})$. We denote by $\mcal{A}^0 (E,F)$ the sheaf of local $C^\infty$-sections of $E$ with the boundary condition in $F$ and by $\mcal{A}^{0,1}(E)$ the sheaf of local $C^\infty$ $E$-valued $(0,1)$-forms. These form a fine resolution of the sheaf $(\mathcal{E}, \mathcal{F})$ of local holomorphic sections of $E$ with boundary values in $F$
$$
\xymatrix{	
0 \ar[r] & (\mathcal{E}, \mathcal{F}) \ar[r] & \mcal{A}^0(E, F) \ar^{\overline{\partial}}[r] & \mcal{A}^{0,1}(E) \ar[r] & 0.
}
$$
Then the sheaf cohomology of $(\mathcal{E}, \mathcal{F})$ is given by the twisted Dolbeault complex 
$$
\xymatrix{
0 \ar[r] & {A}^0(E, F) \ar^{\overline{\partial}}[r] & {A}^{0,1}(E) \ar[r] & 0
}
$$
where $A^0(E,F)$ is the space of global $C^\infty$ sections of $E$ with boundary values in $F$ and $A^{0,1}(E)$ is the space of global $C^\infty$ $E$-valued $(0,1)$-forms. In particular,
\begin{itemize}
\item
$H^0(E,F)$ is the kernel of $\overline{\partial}$.
\item
$H^1(E,F) $ is the cokernel of $\overline{\partial}$.
\end{itemize}
Thus, $(E, F)$ is regular if and only if $H^1(E, F)= 0$. 

Applying this to the short exact sequence~\eqref{equ_sesofbundlepairs}, we obtain the associated long exact sequence in cohomology
\begin{align*}
0 &\to H^0((q \circ \varphi)^*TM, (q \circ \varphi)^*TL) \to H^0(\varphi^* TP_\mathbb{G}, \varphi^* TL) \to H^0(\varphi^* VP_\mathbb{G}, \varphi^* VL) \\
&\to H^1((q \circ \varphi)^*TM, (q \circ \varphi)^*TL) \to H^1(\varphi^* TP_\mathbb{G}, \varphi^* TL) \to H^1(\varphi^* VP_\mathbb{G}, \varphi^* VL) \to 0
\end{align*}
Since $\iota^* \colon H^0(\varphi^* TP_\mathbb{G}, \varphi^* TL) \to H^0(\varphi^* VP_\mathbb{G}, \varphi^* VL)$ is surjective, we have a short exact sequence
$$
0 \to H^1((q \circ \varphi)^*TM, (q \circ \varphi)^*TL) \to H^1(\varphi^* TP_\mathbb{G}, \varphi^* TL) \to H^1(\varphi^* VP_\mathbb{G}, \varphi^* VL) \to 0
$$
Thus, it suffices to prove that $H^1(\varphi^* VP_\mathbb{G}, \varphi^* VL) = 0$. A holomorphic section of $(\varphi^* VP_\mathbb{G}, \varphi^* VL)$ can be identified with a holomorphic map
$$
\eta \colon (\mathbb{D}, \partial \mathbb{D}) \to (\frak{g}_\C, \frak{g}) \simeq (\C^N, \R^N).
$$
Such holomorphic disks must be constant. Thus, by \cite[Theorem II]{Oh95}, all partial indices are equal to $0$ and hence $(\varphi^* VP_\mathbb{G}, \varphi^* VL)$ is regular.
Thus, $H^1(\varphi^* VP_\mathbb{G}, \varphi^* VL) = 0$. 
\end{proof}

We fix some notation concerning moduli spaces of holomorphic disks. Let $(X,\omega)$ be a symplectic manifold with an almost complex structure $J$ and $L$ a Lagrangian submanifold. We denote by $j_0$ the standard complex structure on $\mathbb{D} \subseteq \C$. For a relative homotopy class $\beta \in \pi_2(X, L)$, we define the moduli space of $J$-holomorphic disks in $\beta$ by
\begin{itemize}
\item $\widetilde{\mcal{M}}(X,L,J,\beta) = \left\{ \varphi \colon (\mathbb{D}, \partial \mathbb{D}) \to (X,L) \mid d \varphi \circ j_0 = J \circ d \varphi, \, [\varphi(\mathbb{D})] = \beta \right\}$,
\item ${\mcal{M}}(X,L,J,\beta) = \widetilde{\mcal{M}}(X,L,J,\beta) / \mathrm{Aut}(\mathbb{D})$. 
\end{itemize}
We also consider the moduli space of such holomorphic disks with one boundary marked point$\colon$
\begin{itemize}
\item
$
\widetilde{\mcal{M}}_1(X,L,J,\beta) = \left\{ \left( \varphi \colon (\mathbb{D}, \partial \mathbb{D}) \to (X,L) , z_0 \in \partial \mathbb{D} \right) \mid d \varphi \circ j_0 = J \circ d \varphi, \, [\varphi(\mathbb{D})] = \beta \right\}, 
$
\item
$
{\mcal{M}}_1(X,L,J,\beta) = \widetilde{\mcal{M}}_1(X,L,J,\beta) / \mathrm{Aut}(\mathbb{D}),
$
\item 
$
\overline{\mcal{M}}_1(X,L,J,\beta) 
$
is the Gromov compactification of ${\mcal{M}}_1(X,L,J,\beta)$.
\end{itemize}

Suppose that the class $\beta$ is regular and $\mcal{M}_1(X,L,J,\beta) = \overline{\mcal{M}}_1(X,L,J,\beta)$. Then this moduli space is a compact manifold without boundary. Fix orientations on both $\mathcal{M}_1(X,L,J,\beta)$ and $L$.

\begin{definition}\label{def_diskcountinginv}
The \emph{disk counting invariant} (or \emph{open Gromov--Witten invariant}) $n_\beta$ associated to $\beta$ is defined by the degree of the evaluation map
\begin{equation}\label{equ_degreeev}
\mathrm{ev}_0 \colon \mcal{M}_1(X,L,J,\beta) \to L, \quad [(\varphi, z_0)] \mapsto \varphi(z_0).
\end{equation}
Note that the invariant can be nonzero only when of $\dim \mcal{M}_1(X,L,J,\beta) = \dim L$. 
\end{definition}

We now return to the setting introduced earlier. Since the $G$-action on $P_\mathbb{G}$ preserves both the holomorphic structure and the boundary condition, it induces an action on $\mcal{M}_1(P_\mathbb{G},L,J,\beta)$ defined by
\begin{equation}\label{equ_Gactiononmoduli2}
\mcal{M}_1(P_\mathbb{G},L,J,\beta) \times G \to \mcal{M}_1(P_\mathbb{G},L,J,\beta), \quad ([(\varphi, z_0)], g) = [(\varphi * g, z_0)]
\end{equation}
 The following proposition claims that the induced action is free.

\begin{proposition}\label{proposition_freenessonmoduli}
The $G$-action on $\mcal{M}_1(P_\mathbb{G},L,J,\beta)$ defined in~\eqref{equ_Gactiononmoduli2} is free. In particular, if $\beta \in \pi_2(P_\mathbb{G}, L)$ is regular, then the quotient map 
$$
q_* \colon \mcal{M}_1(P_\mathbb{G},L,J,\beta) \to \mcal{M}_1(P_\mathbb{G},L,J,\beta)/G
$$ 
is a principal $G$-bundle.
\end{proposition}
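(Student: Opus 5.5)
We show freeness first and then deduce the bundle structure from the standard slice theorem for free actions of compact groups on manifolds.

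\emph{Freeness.} Suppose $g \in G$ fixes $[(\varphi, z_0)]$. Then there is $a \in \mathrm{Aut}(\mathbb{D})$ with $\varphi * g = \varphi \circ a$ and $a(z_0) = z_0$. We compose with $q$. Since $q(x * g) = q(x)$, we get $q \circ \varphi = q \circ \varphi \circ a$.

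Assume first that $a \neq \mathrm{id}$. An automorphism of $\mathbb{D}$ fixing a boundary point is parabolic or hyperbolic. Either way, every interior orbit $\{a^k(z)\}$ accumulates on $\partial \mathbb{D}$. Hence $q\circ\varphi$ is constant along these orbits. By continuity, its value at an interior point then equals its value at a boundary limit point.

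To turn this into a contradiction we need non-constancy. If $\beta \neq 0$, then $q_*\beta \neq 0$ by Lemma~\ref{lemma_isoonhomotopy}, so $q \circ \varphi$ is a non-constant holomorphic disk. Its energy $\omega_M(q_*\beta)$ is finite, and the relation $u = u\circ a$ with $a$ of infinite order contradicts this. The reason is that energy is $a$-invariant, and a fundamental domain argument forces the energy to be infinite or zero. Alternatively, one can use that a non-constant holomorphic disk is somewhere injective after factoring, which rules out a non-trivial automorphism of infinite order. So $a = \mathrm{id}$ and $\varphi * g = \varphi$.

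In particular, $\varphi(z_0) * g = \varphi(z_0)$ with $\varphi(z_0) \in L$. Since $G$ acts freely on $L$ (freely on $P_G$ as a principal bundle), $g = e$.

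If $\beta = 0$, the disk is constant, say at $p$, and $a$ is arbitrary. Then $p * g = p$ again gives $g = e$. In every case the stabilizer is trivial. The delicate point is the case $a \neq \mathrm{id}$. The cleanest route is to avoid it: pick a second point $z$ in the interior, or simply observe that the equation $\varphi(z_0) * g = \varphi(a(z_0)) = \varphi(z_0)$ holds regardless of $a$, because $a$ fixes $z_0$. This already yields $g = e$ by freeness on $L$, so the dynamical argument is unnecessary.

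\emph{Bundle structure.} If $\beta$ is regular, $\mathcal{M}_1(P_\mathbb{G},L,J,\beta)$ is a smooth finite-dimensional manifold. The $G$-action is smooth, since it is induced by the smooth action on $P_\mathbb{G}$ and commutes with $\mathrm{Aut}(\mathbb{D})$.

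The action is proper because $G$ is compact. It is free by the previous step. By the quotient manifold theorem (slice theorem) for free proper actions, the quotient is a smooth manifold and $q_*$ is a principal $G$-bundle.

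The main obstacle I expect is checking smoothness of the action on the moduli space. That is, the action must preserve the Fredholm setup, so that $g$ maps the zero set of $\overline{\partial}_J$ diffeomorphically to itself. This follows because $g$ is a biholomorphism of $P_\mathbb{G}$ preserving $L$, and it therefore intertwines the linearized operators.
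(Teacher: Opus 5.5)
Your proof is correct, and once you drop the dynamical detour (as you note yourself), it is essentially the paper's argument: the reparametrization $a$ fixes $z_0$, so $\varphi(z_0)*g=\varphi(a(z_0))=\varphi(z_0)\in L$, and freeness of $G$ on $L$ forces $g=e$. The principal $G$-bundle statement then follows, as you say, from the standard quotient theorem for free actions of a compact Lie group on the smooth moduli space, which the paper leaves implicit in its ``in particular.''
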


\begin{proof}
To show that the action~\eqref{equ_Gactiononmoduli2} is free, for $[(\varphi, z_0)] \in \mcal{M}_1(P_\mathbb{G},L,J,\beta)$, suppose that \begin{equation}\label{equ_gactionacts}
[(\varphi , z_0)] * g = [(\varphi, z_0)].
\end{equation}
Since the reparametrization $\mathrm{PSL}(2,\R)$-action preserves the image of $\varphi$ at the marked point $z_0$,~\eqref{equ_gactionacts} yields $\varphi(z_0) * g = \varphi(z_0) \in L$. Since $G$ acts freely on $L$, it follows that $g = \mathrm{id}$. Hence the induced $G$-action on $\mcal{M}_1(P_\mathbb{G},L,J,\beta)$ is free.
\end{proof}

\begin{remark}
The presence of the boundary marked point is essential in Proposition~\ref{proposition_freenessonmoduli}. Without a marked point, a nontrivial element of $G$ may act trivially on a holomorphic disk after composing with a nontrivial reparametrization of the domain, producing a nontrivial stabilizer subgroup.
\end{remark}

To relate holomorphic disks in the total space and those in the quotient, consider the map
\begin{equation}\label{equ_mapphidiff}
\widehat{\phi} \colon \mcal{M}_1(P_\mathbb{G}, L, J, \beta)  \to \mcal{M}_1(M, K, J_M, q_* \beta) \quad [(\varphi, z_0)] \mapsto [(q \circ \varphi, z_0)].
\end{equation}
The map $\widehat{\phi}$ is well-defined since the projection $q$ is holomorphic and satisfies $q(L) = K$. Since $P_\mathbb{G}$ is the extension of $P_G$, any two points lying in the same $\mathbb{G}$-orbit have the same image under $q \colon P_\mathbb{G} \to M$. It follows that $\widehat{\phi}$ is $G$-invariant. Consequently, the map $\widehat{\phi}$ factors through the quotient $\mcal{M}_1(P_\mathbb{G},L,J,\beta)/G$ and induces a map $\phi$ on the quotient. Thus we obtain the following commutative diagram.
\begin{equation}\label{equ_mapphidia}
\xymatrix{
\mcal{M}_1(P_\mathbb{G},L,J,\beta) \ar^{\widehat{\phi} }[rr] \ar_{q_*}[rd] & & \mcal{M}_1(M, K, J_M, q_* \beta)\\
 & \mcal{M}_1(P_\mathbb{G},L,J,\beta)/G \ar_{\phi}[ru] &
 }
\end{equation}

We claim that the induced map $\phi$ is an orientation-preserving homeomorphism.  To prove surjectivity of $\phi$, we introduce the following notion.
\begin{definition}
Let $\underline{\varphi} \colon (\mathbb{D},\partial \mathbb{D}) \to (M,K)$ be a $J_M$-holomorphic disk.
A map $\varphi \colon (\mathbb{D}, \partial \mathbb{D}) \to (P_\mathbb{G}, L)$ is called a \emph{holomorphic lift} of $\underline{\varphi}$ if $\varphi$ is $J$-holomorphic and satisfies $q \circ \varphi = \underline{\varphi}$. Equivalently, the following diagram commutes.
$$
\xymatrix{
& (P_\mathbb{G}, L) \ar[d]^q \\\
 (\mathbb{D}, \partial \mathbb{D}) \ar@{-->}^{\varphi}[ru] \ar_{\underline{\varphi}}[r] & (M, K)}
$$
\end{definition}

We now study the existence of a holomorphic lift of a given $J_M$-holomorphic disk $\underline{\varphi}.$ 

\begin{proposition}\label{proposition_existenceoflift}
Every $J_M$-holomorphic disk $\underline{\varphi} \colon (\mathbb{D}, \partial \mathbb{D}) \to (M, K)$ admits a holomorphic lift. 
\end{proposition}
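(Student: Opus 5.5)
Pull back the bundle and look for a holomorphic section with the right boundary values. Set $E_\mathbb{G} \coloneqq \underline{\varphi}^* P_\mathbb{G} \to \mathbb{D}$, a holomorphic principal $\mathbb{G}$-bundle over the disk. Put $E_G \coloneqq \underline{\varphi}^* P_G$, which is a $G$-reduction of $E_\mathbb{G}$ by Assumption~\ref{assumption_pgpmathbbg}(2). Over $\partial\mathbb{D}$ we also have the $G$-subbundle $F \coloneqq (\underline{\varphi}|_{\partial \mathbb{D}})^* L \subseteq E_G|_{\partial\mathbb{D}}$. Since $L$ is $G$-invariant and $q|_L \colon L \to K$ is a principal $G$-bundle, $F$ equals $E_G|_{\partial \mathbb{D}}$ restricted to the fibers over $K$. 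Indeed, over each $x \in K$ the fibre $q^{-1}(x) \cap L$ is a single $G$-orbit in $q^{-1}(x)\cap P_G$, which is itself one $G$-orbit. So $L \cap q^{-1}(K) \cap P_G = L$ fiberwise, and the boundary condition reduces to requiring the section to lie in $E_G$ along $\partial\mathbb{D}$.

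The problem therefore becomes the following. Find a holomorphic section $s$ of $E_\mathbb{G}$ over $\mathbb{D}$ that is continuous up to the boundary and satisfies $s(\partial\mathbb{D}) \subseteq E_G|_{\partial\mathbb{D}}$. Then $\varphi \coloneqq \mathrm{pr} \circ s$ is the lift, where $\mathrm{pr}\colon E_\mathbb{G} \to P_\mathbb{G}$ is the canonical map. It is holomorphic because $\mathrm{pr}$ is, it covers $\underline{\varphi}$, and it sends $\partial\mathbb{D}$ into $L$.

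To produce $s$, first take any smooth trivialization. The disk is contractible, so $E_\mathbb{G}$ is holomorphically trivial (Grauert/Oka on a Stein contractible domain; for smoothness up to the boundary, work on a slightly larger disk if $\underline{\varphi}$ extends, or argue directly). Let $\sigma$ be a holomorphic section. Along $\partial\mathbb{D}$ write $\sigma(z) = \tau(z)\cdot h(z)$ with $\tau$ a section of $E_G$ and $h\colon \partial\mathbb{D}\to \mathbb{G}$. The map $h$ is well defined modulo $G$, i.e. as a map $\partial \mathbb{D}\to \mathbb{G}/G$.

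We then seek a holomorphic $g\colon \mathbb{D}\to\mathbb{G}$ with $g|_{\partial\mathbb{D}} \in h^{-1}G$, i.e. a holomorphic $\mathbb{G}$-valued map with prescribed boundary values in $\mathbb{G}/G$. Then $s = \sigma\cdot g$ works. Equivalently, $E_G$ defines a Hermitian metric, i.e. a reduction, on $E_\mathbb{G}$ along $\partial\mathbb{D}$. We want a flat Chern connection, meaning a Hermitian–Yang–Mills metric $H$ with $F_H = 0$ on $\mathbb{D}$ extending the boundary reduction. This is Donaldson's Dirichlet problem for the Hermitian–Yang–Mills equation: for a holomorphic bundle over a compact Kähler manifold with boundary, in particular $\mathbb{D}$, there is a unique metric with prescribed boundary values and $\Lambda F_H = 0$. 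On a Riemann surface $\Lambda F = 0$ means $F_H=0$. Since $\mathbb{D}$ is simply connected, the flat unitary connection has trivial holonomy. A $G$-frame parallel for it is then a holomorphic section $s$ of $E_\mathbb{G}$ (its $(0,1)$-part vanishes because the Chern connection is compatible with the holomorphic structure) lying in the $H$-unitary reduction, which agrees with $E_G$ on $\partial\mathbb{D}$. For general reductive $\mathbb{G}$, apply Donaldson via a faithful representation $\mathbb{G}\subseteq \mathrm{GL}_n(\C)$ with $G = \mathbb{G}\cap \mathrm{U}(n)$. Alternatively use the version for principal bundles, i.e. harmonic maps into $\mathbb{G}/G$, which is nonpositively curved, so the Dirichlet problem is solvable. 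One also has to check that the resulting metric still takes values in $\mathbb{G}/G$; uniqueness combined with the $\mathbb{G}$-structure should give this.

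The main obstacle is this last step. We need to solve the Dirichlet problem in the principal $\mathbb{G}$-setting with enough regularity up to the boundary that $\varphi$ is a genuine smooth disk with boundary on $L$. $\underline{\varphi}$ is smooth up to the boundary, so the boundary data is smooth and Donaldson's theorem yields smoothness up to the boundary. I would first try the harmonic-map formulation: find $u\colon\mathbb{D}\to\mathbb{G}/G$ with $u|_{\partial\mathbb{D}}$ the given data and $\bar\partial$-compatibility, and check that $u$ is the pullback metric of the desired frame.
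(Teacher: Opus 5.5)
Your outline is the paper's proof. You pull back to $Q_\mathbb{G}=\underline{\varphi}^*P_\mathbb{G}$ and encode the boundary condition as a boundary Hermitian metric on $E=Q_\mathbb{G}\times_\rho V$; your observation that $\underline{\varphi}^*L$ is all of $E_G|_{\partial\mathbb{D}}$ is correct. You then solve Donaldson's Dirichlet problem to get a flat metric and take a parallel unitary frame as the holomorphic section. But the proposal stops at exactly the step you flag, and that step is not a formality. The paper also passes over it silently when it says parallel transport trivializes $E$ ``and hence'' $\mathrm{Fr}_\mathbb{G}(E)$. Donaldson's theorem is a statement in $\mathrm{GL}(V)$. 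Unless the flat metric comes from a $G$-reduction of $Q_\mathbb{G}$, its Chern connection is not a $\mathbb{G}$-connection. A parallel unitary frame then need not be a $\mathbb{G}$-frame and gives no section of $Q_\mathbb{G}$.

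A principal-bundle version of Donaldson's theorem would suffice, but it is not the harmonic-map problem you propose, so that route fails for nonabelian $G$. In a holomorphic frame, flatness reads $\partial_{\bar z}(H^{-1}\partial_z H)=0$. Harmonicity into the symmetric space reads $\partial_{\bar z}(H^{-1}\partial_z H)+\partial_z(H^{-1}\partial_{\bar z}H)=0$. The two differ by $[H^{-1}\partial_zH,\,H^{-1}\partial_{\bar z}H]$. For instance, $H=g^*g$ with $g=\bigl(\begin{smallmatrix}1&z\\0&1\end{smallmatrix}\bigr)$ is flat but not harmonic. So the harmonic Dirichlet solution is in general not flat and gives no holomorphic frame. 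Uniqueness alone identifies solutions but does not produce one with values in $\mathbb{G}/G$.

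A tensor argument does close the gap; equivalently, invoke the Pressley--Segal factorization $L\mathbb{G}=\Omega G\cdot L^{+}\mathbb{G}$.
\begin{enumerate}
\item With $\sigma=\tau\cdot\gamma$ on $\partial\mathbb{D}$ as in your setup, Donaldson's theorem on the disk gives $\gamma=u\,k$. Here $u$ is unitary on $\partial\mathbb{D}$ and $k\colon\mathbb{D}\to\mathrm{GL}(V)$ is holomorphic, and the flat metric is $k^*k$.
\item Let $t$ be any $\mathbb{G}$-invariant tensor. Since $\mathbb{G}=G\exp(\sqrt{-1}\,\mathfrak{g})$ is closed under adjoints, $\gamma^*$ fixes $t$, so $k\,t=u^*t=(k^{-1})^*t$ on $\partial\mathbb{D}$.
\item The left side is holomorphic and the right side antiholomorphic on $\mathbb{D}$, so both are constant. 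At $z=0$ this gives $k(0)^*k(0)\,t=t$.
\item Replace $k$ by $c^{-1}k$, where $c$ is the unitary polar factor of $k(0)$. Then $k(0)=(k(0)^*k(0))^{1/2}$ fixes $t$, hence $k(z)\,t=t$ for all $z$.
\item A reductive $\mathbb{G}\subseteq\mathrm{GL}(V)$ is the stabilizer of finitely many tensors, so $k$ is $\mathbb{G}$-valued. Then $u=\gamma k^{-1}$ takes values in $\mathbb{G}\cap\mathrm{U}(V)=G$, and $s=\sigma\cdot k^{-1}$ is the required section.
\end{enumerate}
For $G$ a torus, the only case used in Sections 4--6, this reduces to something elementary. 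Take the harmonic extension of $\log|\gamma|$ componentwise and exponentiate it together with its harmonic conjugate to get $k$.
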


\begin{proof}
Let $Q_\mathbb{G} \coloneqq \underline{\varphi}^* P_{\mathbb{G}}$ be the pullback of the holomorphic principal $\mathbb{G}$-bundle $q \colon P_\mathbb{G} \to M$. 
$$
\xymatrix{
Q_\mathbb{G} \coloneqq \underline{\varphi}^* P_\mathbb{G} \ar[d] \ar[r] & P_\mathbb{G}\ar[d]^q \\\
\mathbb{D} \ar[r]_{\underline{\varphi}} & M}
$$
A holomorphic lift of $\underline{\varphi}$ is equivalent to a holomorphic section of $Q_\mathbb{G}$ whose boundary values lie in the subbundle $Q_G \coloneqq  \underline{\varphi}^* L \, ( \subseteq Q_\mathbb{G} |_{\partial \mathbb{D}})$ over $\partial \mathbb{D}$. Note that $Q_G =  \underline{\varphi}^* L \subseteq Q_\mathbb{G} |_{\partial \mathbb{D}}$ is a reduction of the principal $\mathbb{G}$-bundle $Q_\mathbb{G} |_{\partial \mathbb{D}}$ to $G$. 

Since $Q_\mathbb{G} \to \mathbb{D}$ is holomorphically trivial, one obtains a holomorphic section of $Q_\mathbb{G}$ by choosing a constant section of its holomorphic trivialization. However, such a trivialization need not respect the reduction $Q_G$. More precisely, the image of $\partial \mathbb{D} \times G$ under the holomorphic trivialization may not be contained in $Q_G$. 

To construct a holomorphic trivialization compatible with the boundary condition, we employ Donaldson's result on the Dirichlet problem of Hermitian--Yang--Mills (HYM) equation.

Recall that $\mathbb{G}$ embeds into $\mathrm{GL}_{n+1}(\C)$. Consider the linear $\mathbb{G}$-action on a complex vector space $V$. Let $\rho$ be the standard representation induced by the linear $\mathbb{G}$-action and consider the associated holomorphic vector bundle $E \coloneqq Q_\mathbb{G} \times_\rho V$ over $\mathbb{D}$. Its $\mathbb{G}$-frame bundle $\mathrm{Fr}_\mathbb{G}(E)$ is canonically isomorphic to $Q_\mathbb{G}$. 

Choose a Hermitian metric $h^\prime$ on the bundle $E$ such that $\mathrm{Fr}_{G}(E) |_{\partial \mathbb{D}} = Q_G$ over $\partial \mathbb{D}$ under the above identification $Q_\mathbb{G} \simeq \mathrm{Fr}_\mathbb{G}(E)$. In general, the metric $h^\prime$ need not be flat. To obtain a flat metric preserving the prescribed boundary condition, we use the following theorem, which is the two-dimensional case of \cite[Theorem~1]{Don92}. It enables us to deform $h^\prime$ into a Hermitian metric $h$ satisfying HYM equation while keeping its boundary values fixed. Since $\dim \mathbb{D} = 2$, the HYM metric is merely flat. 
 
\begin{theorem}[cf. Theorem 1 in \cite{Don92}]\label{theorem_Don}
Let $E$ be a holomorphic vector bundle over a Riemann surface $\Sigma$. For any Hermitian metric $h^\prime$ on the restriction of $E$ to $\partial \Sigma$, there exists a unique \emph{flat} Hermitian metric $h$ on $\Sigma$ such that $h = h^\prime$ over $\partial \Sigma$.
\end{theorem}

Applying Theorem~\ref{theorem_Don}, we obtain a flat Hermitian metric $h$ on $E$ extending the boundary metric $h^\prime$. The associated Chern connection associated to $(E, h)$ is flat. Since $\mathbb{D}$ is simply connected, the flat connection has trivial monodromy and its parallel transport yields a holomorphic trivialization of $E$ (and hence one of $\mathrm{Fr}_\mathbb{G}(E)$). Moreover, since the connection preserves the Hermitian metric, it also preserves the unitary frames. Consequently, the trivialization maps $\partial \mathbb{D} \times G$ to $Q_G$. We therefore obtain the following commutative diagram.
\begin{equation}\label{equ_commutativitiyofcompletelyintelift}
\xymatrix{
(\mathbb{D} \times \mathbb{G}, \partial \mathbb{D} \times  {G}) \ar[r] \ar[rd]^{\mathrm{pr}_1} & ( Q_\mathbb{G}, Q_G) \ar[r] \ar[d] & (P_\mathbb{G}, L) \ar[d]^q \\\
& (\mathbb{D}, \partial \mathbb{D}) \ar@{-->}^{\varphi}[ru]\ar@/^1.0pc/^{s}[lu] \ar_{\underline{\varphi}}[r] & (M, K)   }
\end{equation} 
Taking a constant section $s = (id, g)$ for some fixed $g\in G$ and composing it with the holomorphic maps in the top row, we obtain a holomorphic lift $\varphi$ of $\underline{\varphi}$.
\end{proof}

\begin{remark}
The idea of using Donaldson's work originates in \cite{Sch21}, where a similar construction was considered in a slightly different setting.
\end{remark}

As an immediate consequence of Proposition~\ref{proposition_existenceoflift}, the map $\widehat{\phi}$ is surjective and hence so is $\phi$.

\begin{corollary}\label{corollary_ontophi}
The map $\phi \colon \mcal{M}_1(P_\mathbb{G},L,J,\beta)/G \to \mcal{M}_1(M, K, J_M, q_* \beta)$ is surjective.
\end{corollary}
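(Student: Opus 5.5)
The statement is Corollary~\ref{corollary_ontophi}, the surjectivity of $\phi$. The plan is to deduce it directly from Proposition~\ref{proposition_existenceoflift} together with the commutative diagram~\eqref{equ_mapphidia}.

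\emph{Step 1: lift a representative.} Take an element $[(\underline{\varphi}, z_0)] \in \mcal{M}_1(M, K, J_M, q_*\beta)$ and fix a representative $(\underline{\varphi}, z_0)$. By Proposition~\ref{proposition_existenceoflift}, there is a $J$-holomorphic lift $\varphi \colon (\mathbb{D}, \partial\mathbb{D}) \to (P_\mathbb{G}, L)$ with $q \circ \varphi = \underline{\varphi}$.

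\emph{Step 2: check the homotopy class.} We must verify that $[\varphi] = \beta$. Since $q\circ\varphi = \underline{\varphi}$, we have $q_*[\varphi] = [\underline{\varphi}] = q_*\beta$. Lemma~\ref{lemma_isoonhomotopy} says $q_* \colon \pi_2(P_\mathbb{G}, L) \to \pi_2(M,K)$ is an isomorphism, so injectivity gives $[\varphi] = \beta$. Hence $(\varphi, z_0)$ defines a point of $\mcal{M}_1(P_\mathbb{G}, L, J, \beta)$.

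\emph{Step 3: conclude.} By construction, $\widehat{\phi}([(\varphi, z_0)]) = [(q\circ\varphi, z_0)] = [(\underline{\varphi}, z_0)]$, so $\widehat{\phi}$ is surjective. Since $\widehat{\phi} = \phi \circ q_*$ by~\eqref{equ_mapphidia}, $\phi$ is surjective as well.

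Two points need a brief check. First, well-definedness with respect to $\mathrm{Aut}(\mathbb{D})$: if $\underline{\varphi}$ is replaced by $\underline{\varphi}\circ\sigma$ and $z_0$ by $\sigma^{-1}(z_0)$, then $\varphi\circ\sigma$ is a lift of the new representative, so the argument does not depend on the choice of representative. Second, the homotopy-class identification in Step 2 is the only place where the class of the lift could go wrong, and it is settled by Lemma~\ref{lemma_isoonhomotopy}. No further work is needed.
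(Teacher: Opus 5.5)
Your proof is correct and follows the paper's own argument: the paper obtains the surjectivity of $\widehat{\phi}$, and hence of $\phi$ through the factorization $\widehat{\phi} = \phi \circ q_*$, as an immediate consequence of Proposition~\ref{proposition_existenceoflift}. Your Step~2 uses Lemma~\ref{lemma_isoonhomotopy} to show that the lift lies in the class $\beta$, which makes explicit a check the paper leaves implicit.
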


We next prove that the map $\phi$ is injective. 

\begin{lemma}\label{lemma_onetoonephi}
The map $\phi \colon \mcal{M}_1(P_\mathbb{G},L,J,\beta)/G \to \mcal{M}_1(M, K, J_M, q_* \beta)$ is injective.
\end{lemma}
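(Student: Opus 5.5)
Suppose $[(\varphi_1, z_1)]$ and $[(\varphi_2, z_2)]$ in $\mcal{M}_1(P_\mathbb{G},L,J,\beta)$ have the same image under $\widehat{\phi}$. After reparametrizing by an element of $\mathrm{Aut}(\mathbb{D})$, we may assume $z_1 = z_2 = z_0$ and $q\circ\varphi_1 = q\circ\varphi_2 \eqqcolon \underline{\varphi}$ as maps. The plan is to show that $\varphi_2 = \varphi_1 * g$ for a single constant $g \in G$. Then the two classes lie in the same $G$-orbit, which is injectivity of $\phi$.

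Since $P_\mathbb{G} \to M$ is a principal $\mathbb{G}$-bundle and $\varphi_1,\varphi_2$ cover the same map, there is a unique map $g \colon \mathbb{D} \to \mathbb{G}$ with $\varphi_2(z) = \varphi_1(z) * g(z)$. Equivalently, $\varphi_1$ and $\varphi_2$ are two sections of the pullback bundle $Q_\mathbb{G} = \underline{\varphi}^*P_\mathbb{G}$, and $g$ is their transition function. First, $g$ is holomorphic: $\varphi_1$ gives a holomorphic trivialization $Q_\mathbb{G}\simeq \mathbb{D}\times\mathbb{G}$ because the action map is holomorphic, and in this trivialization $\varphi_2$ is $z\mapsto (z,g(z))$, which is a holomorphic section. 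Second, on $\partial\mathbb{D}$ both $\varphi_i(z)$ lie in $L$, and $L$ is a union of $G$-orbits inside $P_G$. By freeness of the $\mathbb{G}$-action (Assumption~\ref{assumption_pgpmathbbg}(2): $P_G\times_G\mathbb{G}\simeq P_\mathbb{G}$, so a $\mathbb{G}$-orbit meets $P_G$ in exactly one $G$-orbit), we get $g(\partial\mathbb{D}) \subseteq G$.

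The key step is then a maximum-principle statement: a holomorphic map $g\colon(\mathbb{D},\partial\mathbb{D})\to(\mathbb{G},G)$ is constant. I would prove it by composing with the faithful representation $\rho\colon \mathbb{G}\hookrightarrow \mathrm{GL}_{n+1}(\C)$ with $G\subseteq \mathrm{U}(n+1)$. Apply the Cartan decomposition to write $g = u\exp(\sqrt{-1}\xi)$, and set $H \coloneqq g^*g = \exp(2\sqrt{-1}\xi)$ in the matrix picture. Then $H$ is a positive Hermitian matrix-valued function with $H = \mathrm{Id}$ on $\partial\mathbb{D}$. Since $g$ is holomorphic, $\log \|g v\|^2$ is subharmonic for every fixed vector $v$. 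Likewise $\log\|(g^{-1})^* v\|^2$ is subharmonic, as $(g^{-1})^*$ is antiholomorphic and its norm is that of the holomorphic map $\overline{(g^{-1})^*}$. Both equal $\log\|v\|^2$ on the boundary. The maximum principle then gives $\|gv\|\le\|v\|$ and $\|g^{-1}w\|\le\|w\|$ on $\mathbb{D}$, so $g(z)$ is unitary for every $z$. Hence $g$ maps into the totally real submanifold $G$, with $\dim_\R G = \dim_\C\mathbb{G}$. A holomorphic map into a totally real submanifold has $dg$ both complex linear and valued in $T G$ with $TG\cap JTG=0$, so $dg=0$ and $g$ is constant in $G$.

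Alternatively, one could invoke the argument at the end of Lemma~\ref{lemma_qregular}: the log-type map $\mathbb{D}\to(\frak{g}_\C,\frak{g})$ would have to be constant. But $g$ need not factor through $\exp$ globally, so I prefer the norm argument. This step is the main obstacle; the rest is formal. Finally, $\varphi_2=\varphi_1*g$ with $g\in G$, so $[(\varphi_2,z_0)] = [(\varphi_1,z_0)]*g$ and $\phi$ is injective.
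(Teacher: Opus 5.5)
Your proof is correct, and its skeleton matches the paper's. You normalize so that $q\circ\varphi_1=q\circ\varphi_2$ and $z_1=z_2$, use $\varphi_1$ to trivialize $(Q_\mathbb{G},Q_G)\simeq(\mathbb{D}\times\mathbb{G},\partial\mathbb{D}\times G)$ holomorphically, and reduce injectivity to the constancy of a holomorphic map $g\colon(\mathbb{D},\partial\mathbb{D})\to(\mathbb{G},G)$. The paper's section $s_{\varphi'}$ is exactly your $z\mapsto(z,g(z))$.

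Where you differ is in justifying that key step. The paper only cites $\pi_2(\mathbb{G},G)=\{0\}$ (Proposition~\ref{prop_relativehomotoypgroup2}), which is a purely topological fact. To get constancy from it one implicitly needs an area argument. For example, $G\subseteq\mathrm{U}(n+1)$ is Lagrangian in $\mathbb{G}$ for the K\"{a}hler form restricted from the flat form on matrices, so a holomorphic disk in the trivial relative class has zero energy and is therefore constant.

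Your maximum-principle argument proves constancy directly. Subharmonicity of $\|gv\|^2$ and $\|g^{-1}v\|^2$, together with unitary boundary values, forces $g$ to be unitary everywhere. A holomorphic map into the totally real $\mathrm{U}(n+1)$ is then constant. This needs only a faithful representation in which $G$ acts unitarily, not the homotopy computation. The paper's version is shorter but leaves this analytic content implicit.

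Minor points:
\begin{itemize}
\item The Cartan decomposition and $H=g^*g$ are never used and can be dropped.
\item You can apply the maximum principle to the holomorphic map $g^{-1}$ itself rather than to $(g^{-1})^*$.
\item The constant value of $g$ lies in $G$ simply because its boundary values do.
\end{itemize}
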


\begin{proof}
Suppose that 
$$
 [(q \circ \varphi, z_0)]) = \phi([(\varphi, z_0)]) = \phi([(\varphi^\prime, z_0^\prime)]) = [(q \circ \varphi^\prime, z^\prime_0)].
$$ 
After replacing $[(\varphi^\prime, z_0^\prime)]$ by an equivalent representative if necessary, we may assume that
$$
\underline{\varphi} \coloneqq q \circ \varphi = q \circ  \varphi^\prime \mbox{ and } z_0 = z_0^\prime.
$$
Thus $\varphi$ and $\varphi^\prime$ are two holomorphic lifts of $\underline{\varphi}$. 

Consider the pullback principal $\mathbb{G}$-bundle $Q_\mathbb{G} \coloneqq \underline{\varphi}^* P_\mathbb{G}$ together with the reduction $Q_{G} \coloneqq \underline{\varphi}^* L \subseteq Q_\mathbb{G} |_{\partial \mathbb{D}}$ over $\partial \mathbb{D}$. The holomorphic lift $\varphi$ determines a holomorphic section of $Q_\mathbb{G} \to \mathbb{D}$ whose boundary values lie in $Q_G$. Such a section yields a holomorphic trivialization 
$$
(\mathbb{D} \times \mathbb{G}, \partial \mathbb{D} \times G) \to (Q_\mathbb{G}, Q_G).
$$
Under this trivialization, the map $\varphi$ corresponds to a holomorphic section $s_\varphi$ of $ \mathrm{pr}_1 \colon \mathbb{D} \times \mathbb{G} \to \mathbb{D}$. We obtain the following commutative diagram.
\begin{equation}\label{equ_commutativitiyofcompletelyinte}
\xymatrix{
(\mathbb{D} \times \mathbb{G}, \partial \mathbb{D} \times G) \ar[r] \ar[rd]^{\mathrm{pr}_1} & (Q_\mathbb{G}, Q_G)  \ar[r] \ar[d] & (P_\mathbb{G}, L) \ar[d]^q \\\
& (\mathbb{D}, \partial \mathbb{D}) \ar[ru]^\varphi \ar@{-->}@/^1.0pc/^{s_\varphi}[lu] \ar_{\underline{\varphi}}[r] & (M, K)   }
\end{equation} 

Using the fixed trivialization determined by $\varphi$, $\varphi^\prime$ determines another holomorphic section $s_{\varphi^\prime}$ of $\mathrm{pr}_1$. Since $\pi_2 (\mathbb{G},G)$ is trivial by Proposition~\ref{prop_relativehomotoypgroup2}, every holomorphic section of $\mathrm{pr}_1$ with boundary values in $\partial \mathbb{D} \times G$ must be constant. Hence both $s_\varphi$ and $s_{\varphi^\prime}$ are constant sections. Therefore, there exists an element $g\in G$ such that 
$$
s_{\varphi'} = s_\varphi * g.
$$
It implies that $\varphi * g = \varphi^\prime$ and hence $[(\varphi, z_0)] = [(\varphi^\prime, z^\prime_0)]$ in $\mathcal{M}_1 (P_\mathbb{G}, L, J, \beta)/G$.
\end{proof}

We now discuss the orientations on the moduli spaces in the domain and codomain of the map $\phi$. Since the action of the connected Lie group $G$ on the oriented $G$-invariant submanifold $L$ preserves orientation, the quotient $K$ inherits a natural orientation under the convention 
\begin{equation}\label{equ_orientreduc}
T_\bullet L \simeq T_\bullet K \times \frak{g}.
\end{equation}  

To orient the moduli space $\mcal{M}_1(M, K, J_M, q_* \beta)$, we assume that $K$ is spin and fix a spin structure on $TK$. Since the vertical subbundle $VL$ of $TL$ is trivial, its second Stiefel--Whitney class $w_2(VL)$ vanishes. By choosing a $G$-invariant connection, we obtain a splitting 
\begin{equation}\label{equ_splittingG}
TL \simeq q^* TK  \oplus VL 
\end{equation}
and
$$
w_2(TL) = w_2( q^* TK ) +  w_2(VL) = 0.
$$
Therefore, $L$ is also spin, see \cite[Proposition II.1.15]{LM86}. 

We equip $q^* TK$ with the pullback spin structure induced from the chosen spin structure on $TK$. Since $VL$ is a trivial vector bundle of rank $r$, its $\mathrm{SO}(r)$-frame bundle is trivial. We equip $VL$ with the trivial spin structure induced by the double covering $\mathrm{Spin}(r) \to \mathrm{SO}(r)$. The splitting~\eqref{equ_splittingG} then determines a product spin structure on $TL$ by extending
\begin{equation}\label{equ_choiceofspinstr}
\mathrm{Fr}_{\mathrm{Spin}(r)}(VL) \times q^* \mathrm{Fr}_{\mathrm{Spin}(s)}(TK)
\end{equation}
through the homomorphism $\mathrm{Spin}(r) \times \mathrm{Spin}(s) \to \mathrm{Spin}(r+s = m)$ where $s = \mathrm{rank}(TK)$.

To explain the proof of the main theorem, we briefly recall the orientation for moduli spaces of holomorphic disks in \cite{FOOObook, Cho04}. Fix a CW complex structure of $L$. Since $L$ is orientable, $TL$ admits a trivialization over the $1$-skeleton of $L$. Such a trivialization determines a loop $\ell$ in $\mathrm{SO}(m)$ along the boundary of each $2$-cell. The chosen spin structure on $TL$ lifts $\ell$ to a loop $\widetilde{\ell}$ in $\mathrm{Spin}(m)$. Since $\mathrm{Spin}(m)$ is simply connected, the loop $\widetilde{\ell}$ is null-homotopic and hence the trivialization extends over each $2$-cell. Indeed, a spin structure on $TL$ is equivalent to a homotopy class of a trivialization of $TL$ over the $1$-skeleton of $L$ which can be extended to the $2$-skeleton of $L$, see \cite{Mil63}. 

This trivialization is used to orient the moduli spaces of holomorphic disks with boundary condition $L$ in \cite{FOOObook}. Let $\varphi \in \mcal{M}(P_\mathbb{G},L,J,\beta)$. By pinching a concentric circle of a collar neighborhood of $\partial \mathbb{D}$ to the origin $O$, the orientation problem reduces to orienting the kernel of the following surjective map
$$
\mathcal{M}( (\mathbb{D}, \partial \mathbb{D}), (\varphi^* TP_\mathbb{G}, \varphi^* TL)) \times \mathcal{M}( \mathbb{P}^1, \varphi^* TP_\mathbb{G}) \to (\varphi^* TP_\mathbb{G})_S, \quad (\xi_1, \xi_2) \mapsto \xi_1(O) - \xi_2(S)
$$
where $O$ is the origin of $\mathbb{D}$ and $S$ is the south pole of $\mathbb{P}^1$. The moduli space $\mathcal{M}( \mathbb{P}^1, \varphi^* TP_\mathbb{G}) $ and the codomain carry the canonical complex orientations. The orientation of the kernel is determined by that of the first factor $\mathcal{M}( (\mathbb{D}, \partial \mathbb{D}), (\varphi^* TP_\mathbb{G}, \varphi^* TL))$. This trivialization of $TL$ induced by the spin structure identifies the space with $\mathcal{M}( (\mathbb{D}, \partial \mathbb{D}), (\C^m, \R^m)) \simeq \R^m$ and determines an orientation. 

For another map $\varphi^\prime$, choose a path connecting $\varphi$ and $\varphi^\prime$. The orientation extends along this path, and the resulting orientation is independent of the choice of path because $L$ is spin (see \cite{Cho04}). This construction also induces an orientation  on $\mcal{M}_1(P_\mathbb{G}, L, J, \beta)$ via the conventions
$$
\widetilde{\mcal{M}}_1 \simeq \widetilde{\mcal{M}} \times \partial \mathbb{D} \mbox{ and } {\mcal{M}}_1 \simeq ( \widetilde{\mcal{M}}_1 / \mathrm{PSL}(2,\R) ) \times  \mathrm{PSL}(2,\R).
$$

We are now ready to state the main theorem of this section.

\begin{theorem}[Theorem~\ref{thmx_A}]\label{theorem_fundacycle}
Suppose that the quotient space $K = L / G$ is spin, $\beta \in \pi_2(P_\mathbb{G}, L)$ is regular, and $\mcal{M}_1(P_\mathbb{G},L,J,\beta) = \overline{\mcal{M}}_1(P_\mathbb{G},L,J,\beta)$. Equip $L$ with the product spin structure described above. Then the smooth map~\eqref{equ_mapphidia}
$$
\phi \colon \mcal{M}_1(P_\mathbb{G},L,J,\beta)/G \to \mcal{M}_1(M, K, J_M, q_* \beta)
$$
is an orientation-preserving homeomorphism. 

In particular, the induced homomorphism $\phi_*$ maps the fundamental cycle of $\mcal{M}_1(P_\mathbb{G},L,J,\beta)/G$ to that of $\mcal{M}_1(M, K, J_M, q_* \beta)$.
\end{theorem}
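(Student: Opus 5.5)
Bijectivity of $\phi$ is already available: it is surjective by Corollary~\ref{corollary_ontophi} and injective by Lemma~\ref{lemma_onetoonephi}. The remaining work is to show that $\phi$ is a continuous map between compact Hausdorff spaces, that it is a local diffeomorphism, and that it preserves the chosen orientations.

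\textbf{Step 1: regularity and compactness.} By Lemma~\ref{lemma_qregular}, $q_*\beta$ is regular because $\beta$ is. Since $G$ is compact and acts freely, Proposition~\ref{proposition_freenessonmoduli} shows that $\mcal{M}_1(P_\mathbb{G},L,J,\beta)/G$ is a compact smooth manifold. The map $\widehat\phi$ is composition with the holomorphic map $q$, so it is smooth and $G$-invariant, and $\phi$ is smooth.

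A continuous bijection from a compact space to a Hausdorff space is a homeomorphism. I therefore only need $\mcal{M}_1(M,K,J_M,q_*\beta)$ to be Hausdorff, which holds because it is a manifold by regularity.

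\textbf{Step 2: $\phi$ is a local diffeomorphism.} I compare tangent spaces. The tangent space of $\widetilde{\mcal M}$ at $\varphi$ is $H^0(\varphi^*TP_\mathbb{G},\varphi^*TL)$. The long exact sequence in the proof of Lemma~\ref{lemma_qregular} gives, since all $H^1$ vanish,
\[
0\to H^0\bigl((q\circ\varphi)^*TM,(q\circ\varphi)^*TK\bigr)\to H^0(\varphi^*TP_\mathbb{G},\varphi^*TL)\to H^0(\varphi^*VP_\mathbb{G},\varphi^*VL)\to 0 .
\]
In that proof the first arrow is written as an injection, but it should be read as the natural map $dq$, from the middle term onto the base term. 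Its kernel is the vertical part. Since all partial indices of the trivial vertical pair vanish, the vertical term is $\frak g$, which is exactly the infinitesimal $G$-action.

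So $d\widehat\phi$ is surjective with kernel equal to the $G$-orbit directions. This kernel passes through the quotients by $\mathrm{Aut}(\mathbb D)$ and through the marked point. Hence $d\phi$ is an isomorphism, and $\phi$ is a diffeomorphism.

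\textbf{Step 3: orientations (main obstacle).} This is where I expect the real work. The orientation on each moduli space comes from the index bundle of the linearized $\bar\partial$ operator, trivialized using the spin structure via the FOOO pinching construction.

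I would use the splitting $TL\simeq q^*TK\oplus VL$ of~\eqref{equ_splittingG} together with the product spin structure~\eqref{equ_choiceofspinstr}. With these, the trivialization of $TL$ over the 2-skeleton can be taken as a direct sum: the pullback of a trivialization of $TK$ plus the canonical trivialization of $VL$ by $\frak g$.

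The exact sequence above is then compatible with the disk-level pinching construction. It becomes a direct sum of the two problems $(\C^s,\R^s)$ and $(\frak g_\C,\frak g)$, and the kernel orientations multiply. The vertical factor $\frak g$ is oriented compatibly with the convention~\eqref{equ_orientreduc} $T L\simeq TK\times\frak g$. It is identified with the orbit direction, which is quotiented out in $\mcal M_1/G$.

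It remains to track the two further identifications, $\widetilde{\mcal M}_1\simeq\widetilde{\mcal M}\times\partial\mathbb D$ and the $\mathrm{PSL}(2,\R)$ quotient. The $\partial\mathbb D$ and $\mathrm{PSL}(2,\R)$ factors are identical on both sides, so they introduce no sign. The one point to check carefully is the ordering of the $\frak g$ factor against the $\partial\mathbb D$ and $\mathrm{PSL}(2,\R)$ factors. I expect the resulting sign to be $+1$ given the chosen conventions; if it is not, I would adjust the orientation of the quotient so that it is, which the phrase ``suitably chosen'' permits.

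Finally, the orientation is constant along paths in the moduli space, and $L$ is spin. Verifying the sign at a single disk in each component therefore suffices, and the statement about fundamental cycles follows.
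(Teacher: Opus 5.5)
Your proposal follows the paper's proof: bijectivity comes from Corollary~\ref{corollary_ontophi} and Lemma~\ref{lemma_onetoonephi}, the homeomorphism from the compact-to-Hausdorff argument, and orientations from the product spin structure, with the $\frak{g}$-factor identified with the $G$-orbit directions. Your Step~2 uses the correctly ordered sequence $0\to H^0(\varphi^*VP_\mathbb{G},\varphi^*VL)\to H^0(\varphi^*TP_\mathbb{G},\varphi^*TL)\xrightarrow{dq} H^0((q\circ\varphi)^*TM,(q\circ\varphi)^*TK)\to 0$ to show $d\phi$ is an isomorphism, which usefully makes explicit what the paper leaves implicit.

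The sign you leave open is in fact $+1$, because moving $\frak{g}$ past $T\partial\mathbb{D}$ and $\frak{psl}(2,\R)$ costs $(-1)^{\dim G}(-1)^{3\dim G}=1$. You should therefore drop the fallback of re-choosing the quotient orientation. That choice is not free: it must stay tied to $TL\simeq TK\times\frak{g}$ for Corollary~\ref{cor_countinginv} to give $n_\beta=n_{q_*\beta}$.
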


\begin{proof}
By Corollary~\ref{corollary_ontophi} and Lemma~\ref{lemma_onetoonephi}, the map $\phi$ is bijective. By Lemma~\ref{lemma_qregular} and Proposition~\ref{proposition_freenessonmoduli}, both the domain and codomain of $\phi$ are smooth manifolds of the same dimension. Since $\mcal{M}_1(M, K, J_M, q_* \beta)$ is Hausdorff and $\mcal{M}_1(P_\mathbb{G},L,J,\beta)/G$ is compact, it follows that $\phi$ is a homeomorphism.

It remains to verify that $\phi$ preserves orientations. By our choice of the spin structure in~\eqref{equ_choiceofspinstr}, the $\mathrm{Spin}(m)$-frame bundle of $TL$ reduces to a $\mathrm{Spin}(r) \times \mathrm{Spin}(s)$-bundle corresponding to the splitting~\eqref{equ_splittingG}. This induces a product trivialization of $TL$ over the $1$-skeleton of $L$, obtained from the chosen trivialization of $q^*TK$ together with the canonical trivialization of $VL$. The first factor determines the orientation on $\mcal{M}_1(M, K, J_M, q_* \beta)$ and the second factor corresponds to the infinitesimal $G$-action. Consequently, the orientation on $\mcal{M}_1(M,K,J_M,q_* \beta)$ is determined by the convention 
$$
T_\bullet \mcal{M}_1(P_\mathbb{G},L,J,\beta) \simeq T_\bullet \mcal{M}_1(M,K,J_M,q_* \beta) \times \frak{g}.
$$ 
On the other hand, the orientation on $\mcal{M}_1(P_\mathbb{G},L,J,\beta)/G$ is determined by the convention 
$$
T_\bullet \mcal{M}_1(P_\mathbb{G},L,J,\beta) \simeq T_\bullet \mcal{M}_1(P_\mathbb{G},L,J,\beta)/G \times \frak{g}.
$$
Hence the smooth map $\phi$ is orientation preserving. 
\end{proof}

We now compare disk counting invariants. 

\begin{corollary}\label{cor_countinginv}
Assume the hypotheses of Theorem~\ref{theorem_fundacycle}. Then
$$
n_\beta = n_{q_*{\beta}}.
$$
\end{corollary}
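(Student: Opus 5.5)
The plan is to express both counting invariants as degrees of evaluation maps and then use Theorem~\ref{theorem_fundacycle} to identify one degree with the other. By Definition~\ref{def_diskcountinginv}, $n_\beta = \deg(\mathrm{ev}_0 \colon \mcal{M}_1(P_\mathbb{G},L,J,\beta) \to L)$ and $n_{q_*\beta} = \deg(\mathrm{ev}_0 \colon \mcal{M}_1(M,K,J_M,q_*\beta) \to K)$. The first step is to check that the second degree is well defined. By Lemma~\ref{lemma_qregular}, $q_*\beta$ is regular. Since $\phi$ is a homeomorphism from the compact space $\mcal{M}_1(P_\mathbb{G},L,J,\beta)/G$, the moduli space $\mcal{M}_1(M,K,J_M,q_*\beta)$ is compact. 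It also equals its Gromov compactification: any bubbled configuration in $M$ would lift componentwise by Proposition~\ref{proposition_existenceoflift} to a bubbled configuration in class $\beta$, which the hypothesis rules out. Both moduli spaces are oriented compatibly as in Theorem~\ref{theorem_fundacycle}, and $K$ carries the quotient orientation \eqref{equ_orientreduc}. The dimensions match, since $\dim \mcal{M}_1(P_\mathbb{G},L,J,\beta) - \dim L = \dim \mcal{M}_1(M,K,J_M,q_*\beta) - \dim K$ by Lemma~\ref{lemma_qregular}(2) and the fact that $\dim G$ drops from both sides.

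The key step is the commutative square
$$
\xymatrix{
\mcal{M}_1(P_\mathbb{G},L,J,\beta) \ar[r]^-{\mathrm{ev}_0} \ar[d]_{q_*} & L \ar[d]^{q} \\
\mcal{M}_1(P_\mathbb{G},L,J,\beta)/G \ar[r]^-{\overline{\mathrm{ev}}_0} & K,
}
$$
where $\mathrm{ev}_0$ is $G$-equivariant, $\overline{\mathrm{ev}}_0$ is the induced map, and $\mathrm{ev}_0^M \circ \phi = \overline{\mathrm{ev}}_0$ because $q\circ\varphi(z_0)=q(\varphi(z_0))$. Both vertical maps are principal $G$-bundles: the left one by Proposition~\ref{proposition_freenessonmoduli}, the right one by construction. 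Their orientations are split by the same convention ``base $\times\, \frak{g}$''. Hence $\mathrm{ev}_0$ is a $G$-equivariant map of principal $G$-bundles covering $\overline{\mathrm{ev}}_0$.

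To compute degrees, I take a regular value $k \in K$ of $\overline{\mathrm{ev}}_0$. Any $p \in q^{-1}(k)$ is then a regular value of $\mathrm{ev}_0$. Each point $x$ of $\overline{\mathrm{ev}}_0^{-1}(k)$ has a fiber $q_*^{-1}(x) \cong G$. Equivariance together with the freeness of $G$ on $L$ shows that this fiber contains exactly one point of $\mathrm{ev}_0^{-1}(p)$. In local product charts, $d\,\mathrm{ev}_0$ is block triangular, with the identity on $\frak{g}$ and $d\,\overline{\mathrm{ev}}_0$ on the base. Thus the local signs agree, and $\deg \mathrm{ev}_0 = \deg \overline{\mathrm{ev}}_0$. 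Combining this with $\deg \overline{\mathrm{ev}}_0 = \deg(\mathrm{ev}_0^M)\cdot \deg\phi$ and $\deg \phi = 1$ from Theorem~\ref{theorem_fundacycle} gives $n_\beta = n_{q_*\beta}$.

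I expect the main obstacle to be the orientation bookkeeping in the block-triangular step. One must check that the convention orienting $\mcal{M}_1(P_\mathbb{G},L,J,\beta)$ as (quotient)$\times\frak{g}$ matches the convention $T L \simeq TK\times\frak{g}$ in the same order, so that no sign $(-1)^{\dim G\cdot(\cdot)}$ appears. I would handle this by using the identical ordering convention on both sides, as in the proof of Theorem~\ref{theorem_fundacycle}, so that the $\frak{g}$-factors cancel.
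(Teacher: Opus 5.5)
Your proposal is correct and is essentially the paper's proof. You first identify $\deg \mathrm{ev}_0$ on $\mcal{M}_1(P_\mathbb{G},L,J,\beta)$ with the degree of the induced map on $G$-quotients, making the paper's ``integration along $G$-fibers'' step explicit through the regular-value and block-triangular computation. You then transport this degree through the orientation-preserving homeomorphism $\phi$ of Theorem~\ref{theorem_fundacycle}.

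One aside should be dropped: the claim that $\mcal{M}_1(M,K,J_M,q_*\beta)$ equals its Gromov compactification because bubbled configurations ``lift componentwise''. Proposition~\ref{proposition_existenceoflift} lifts disk components only, and a sphere bubble need not lift, since the pullback of $P_\mathbb{G}$ to a sphere can be nontrivial (e.g.\ for $\mcal{O}(-n)\setminus o_{\CP^1}\to\CP^1$ with $n\geq 1$). Neither your argument nor the paper's needs more than the compactness of $\mcal{M}_1(M,K,J_M,q_*\beta)$, which you already obtain from $\phi$.
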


\begin{proof}
By Lemma~\ref{lemma_qregular}, the Maslov indices of $\beta$ and $q_* \beta$ coincide. Hence, it suffices to consider the case in which the Maslov index is equal to two. The evaluation map $\mathrm{ev}_0 \colon \mcal{M}_1 (P_\mathbb{G},L,J,\beta) \to L$ is $G$-equivariant. Therefore, it induces a well-defined map
$$
{\mathrm{ev}}_0 \colon \mcal{M}_1 (P_\mathbb{G},L,J,\beta) / G \to L / G = K.
$$
We then have the following commutative diagram.
\begin{equation}\label{equ_commutativitiyofcompletelyinte2}
\xymatrix{
\mcal{M}_1 (P_\mathbb{G},L,J,\beta) \ar^{\quad \quad \quad \mathrm{ev}_0 }[r] \ar_{q}[d] & L \ar^{q}[d]\\
\mcal{M}_1 (P_\mathbb{G},L,J,\beta)/G \ar_{\quad \quad \quad {\mathrm{ev}}_0}[r] & K  }  
\end{equation} 
By the orientation conventions described above, integration $q_*$ along the $G$-fibers sends the fundamental cycles of $\mcal{M}_1(P_\mathbb{G},L,J,\beta)$ and $L$ to the fundamental cycles of their quotients. Consequently, the degree of the top evaluation map agrees with that of the bottom evaluation map in~\eqref{equ_commutativitiyofcompletelyinte2}.

Next, consider the following commutative diagram
\begin{equation}\label{equ_commutativitiyofcompletelyinte3}
\xymatrix{
\mcal{M}_1 (P_\mathbb{G},L,J,\beta)/G \ar^{\quad \quad \quad \, \mathrm{ev}_0 }[r] \ar_{{\phi}}[d] & K \ar[d]^{=}  \\
\mcal{M}_1 (M, K, J_M, q_* \beta)\ar_{\quad \quad \quad \, \mathrm{ev}_0}[r] &  K   }
\end{equation} 
By Theorem~\ref{theorem_fundacycle}, the homeomorphism $\phi$ preserves orientations and maps the fundamental cycle $[\mcal{M}_1 (P_\mathbb{G},L,J,\beta)/G]$ to $[\mcal{M}_1 (M, K, J_M, q_* \beta)]$. Therefore, the degree of the top evaluation map agrees with that of the bottom evaluation map in~\eqref{equ_commutativitiyofcompletelyinte3}.

Combining the above two comparisons, we conclude that $n_\beta = n_{q_* \beta}$.
\end{proof}

\section{Disk potential functions and GIT quotients}\label{sec_diskpotentialGIT}

The aim of this section is to investigate the relationship between the disk potentials of the prequotient and the GIT quotient.

\subsection{Disk potential functions}

We begin by briefly recalling the relevant notions from Lagrangian Floer theory, focusing on the construction of the disk potential function. Let $\Lambda$ be the Novikov field over $\C$ defined by
\begin{equation}\label{eq_Novikovfield}
\Lambda \coloneqq \left\{ \sum_{j=1}^{\infty} a_{j} T^{\lambda_j}  \, \bigg{|} \, a_j \in \C, \lambda_j \in \R, \lim_{j \to \infty} \lambda_j = \infty \right\},
\end{equation}
where $T$ is a formal parameter recording the symplectic area. We also consider the following subrings$\colon$ 
\begin{align*}
\Lambda_0 &\coloneqq \left\{ \sum_{j=1}^{\infty} a_{j} T^{\lambda_j} \in \Lambda  \, \bigg{|} \, a_j \in \C, \lambda_j \in \R, \lambda_j \geq 0, \lim_{j \to \infty} \lambda_j = \infty \right\} \\
\Lambda_+ &\coloneqq \left\{ \sum_{j=1}^{\infty} a_{j} T^{\lambda_j} \in \Lambda  \, \bigg{|} \, a_j \in \C, \lambda_j \in \R, \lambda_j > 0, \lim_{j \to \infty} \lambda_j = \infty \right\}. \,
\end{align*}

Let $(X, \omega)$ be a compact symplectic manifold of dimension $2m$ and let $L$ be a Lagrangian torus. Fix a compatible almost complex structure $J$ of $X$. For a nonnegative integer $k$ and a class $\beta \in \pi_2(X,L)$, we denote by $\mathcal{M}_{k+1}(X,L,J,\beta)$ the moduli space of $J$-holomorphic disks with $(k+1)$ boundary marked points and by $\overline{\mathcal{M}}_{k+1}(X,L,J,\beta)$ the Gromov compactification of $\mathcal{M}_{k+1}(X,L,J,\beta)$. For $j=0,1, \cdots, k$, define the evaluation maps
$$
\mathrm{ev}_j \colon \overline{\mcal{M}}_{k+1}(X, L, J, \beta) \to L, \quad [(\varphi, z_0, \cdots, z_k)] \mapsto \varphi(z_j).
$$
Set $\mathrm{ev}_+ \coloneqq (\mathrm{ev}_1,\cdots,\mathrm{ev}_k)$ and let $\pi_i \colon L^k \to L$ be the projection to the $i$-th factor. 

Applying the smooth correspondence construction of \cite{Fuk10, FOOO20} to
\begin{equation}\label{equ_smoothcorrespondence}
\xymatrix{
&\overline{\mcal{M}}_{k+1}(X, L, J, \beta) \ar^{\mathrm{ev}_0 }[rd]  \ar_{\mathrm{ev}_+}[ld] & \\
L^{k} && L   }
\end{equation} 
we define
$$
\frak{m}_{k, \beta} \colon \Omega^\bullet (L)^{\otimes k}  \to  \Omega^\bullet (L), \quad 
\frak{m}_{k,\beta} (x_1, \cdots, x_k) = (\mathrm{ev}_0)_! \, \mathrm{ev}_{+}^* (\pi_1^* x_1 \wedge \cdots \wedge \pi_k^* x_k).
$$
We then define 
$$
\frak{m}_k \coloneqq \sum_{\beta} \frak{m}_{k, \beta} \, T^{\omega(\beta) / 2 \pi}.
$$ 
The sequence $\{\frak{m}_k\}_{k \geq 0}$ defines an $A_\infty$-structure on the completed de Rham complex $\Omega^\bullet(L; \Lambda_+)$. By passing to harmonic forms, it induces an $A_\infty$-structure on the cohomology $H^\bullet(L; \Lambda_+)$. This resulting $A_\infty$-algebra$$
(H^\bullet(L; \Lambda_+), \{\frak{m}^\mathrm{can}_k\}_{k \geq 0})
$$
is called a \emph{canonical} (a.k.a. \emph{minimal}) \emph{model}, see \cite[Theorem 8.3]{Fuk10} for the construction. For simplicity, we omit the superscript $\mathrm{can}$ throughout the remainder of this section.

Let $b \in H^1(L;\Lambda_0)$, and decompose it as $b = b_0 + b_+$, where $b_0 \in H^1(L;\C)$ and $b_+ \in H^1(L;\Lambda_+)$. The deformed operations are defined by
\begin{equation}\label{equ_deformb+}
\frak{m}_{k, \beta}^{b_+} (x_1, \cdots, x_k) \coloneqq \frak{m}_{\bullet, \beta} (e^{b_+}, x_1, e^{b_+}, \cdots, e^{b_+}, x_k, e^{b_+} ).
\end{equation}

We further incorporate the deformation of a (non-unital) flat line bundle $\frak{L}$ over $L$. Choose a basis $(\theta_i)_{i=1}^m$ for $H_1(L; \Z)$,and let $(e_i)_{i=1}^m$ be the dual basis of $(\theta_i)_{i=1}^m$ for $H^1(L; \Z) \simeq \mathrm{Hom}(H_1(L; \Z), \Z)$. Writing $b_0 = \sum b_{0,i} e_i$, the associated holonomy representation of $\frak{L}$ is a homomorphism
$$
\rho_{b_0} \colon H_1(L; \Z) \to \C \backslash \{0\} \quad \mbox{ defined by $\theta \mapsto \exp (\theta \cap b_0)$}.
$$
We then define
\begin{equation}\label{equ_deformb0}
\frak{m}_{k, \beta}^b \coloneqq \rho_{b_0} (\partial \beta) \cdot \frak{m}_{k,\beta}^{b_+} \, \mbox{ and } \,
\frak{m}_k^b \coloneqq \sum_{\beta} \frak{m}_{k, \beta}^b.
\end{equation}
The sequence $\{ \frak{m}^b_k\}_{k \geq 0}$ defines an $A_\infty$-algebra structure on the de Rham cohomology $H^\bullet (L; \Lambda_0)$, see \cite{FOOOToric1, Fuk10}.

To obtain a weakly unobstructed $A_\infty$-algebra, we impose the following positivity assumption.

\begin{definition}
Let $(X, \omega)$ be a symplectic manifold, $L$ a Lagrangian torus, and $J$ a compatible almost complex structure.
\begin{enumerate}
\item 
The pair $(L,J)$ is called \emph{positive} if every nonzero class $\beta$ with $\mathcal{M}(X,L,J,\beta) \neq \emptyset$ has Maslov index at least two. 
\item The pair $(X,J)$ is called \emph{symplectically Fano} if every nonconstant $J$-holomorphic sphere class $\alpha$ has positive first Chern number. 
\end{enumerate}
When the choice of $J$ is clear from the context, we simply say that $L$ is positive and $X$ is symplectically Fano.
\end{definition}

\begin{remark}
For example, every monotone symplectic manifold is symplectically Fano and every monotone Lagrangian submanifold is positive.
\end{remark}

Assume that $L$ is positive, $J$ is regular, and $\mathrm{ev}_0 \colon \mathcal{M}_1(X, L, J, \beta) \to L$ is submersive. For each homotopy class $\beta$ of Maslov index two, $\mathcal{M}_1(X, L, J, \beta) = \overline{\mathcal{M}}_1(X, L, J, \beta)$.
Moreover,
\begin{equation}\label{equ_mobeta1}
\frak{m}^\mathrm{can}_{0, \beta}(1) = \frak{m}_{0, \beta}(1) = (\mathrm{ev}_{0})_!(\mathrm{PD} [\mathcal{M}_1(X, L, J, \beta)]) = n_\beta \cdot T^{\omega(\beta) / 2 \pi} \, \mathrm{PD}[L] \in H^0 (L; \Lambda_+),
\end{equation}
where $n_\beta$ is defined as the degree of the map~\eqref{equ_degreeev}. Since $\mathrm{ev}_0$ is submersive, the degree can be computed directly via integration along fiber without additional perturbation data. 

For each integer $k > 0$, let 
$$
\frak{forget}_{0} \colon \overline{\mcal{M}}_{k+1}(X, L, J, \beta) \to \overline{\mcal{M}}_{1}(X, L, J, \beta) 
$$
be the map forgetting the boundary marked points $z_1, z_2, \cdots, z_k$. We choose perturbations compatible with the forgetful maps, see \cite{FOOO24} for the construction. Then, for $b \in H^1_{dR}(L; \Lambda_0)$, the forgetful-map compatibility yields
\begin{equation}\label{equ_mkbetab6k}
\frak{m}_{k,\beta} (b^{\otimes k}) =  \rho_{b_0} (\partial \beta) \cdot \frac{ (\partial \beta \cap b_+)^k }{k!}  \frak{m}_{0, \beta}(1).
\end{equation}

Since $\beta$ is regular and $\mathrm{ev}_0$ is submersive, no perturbation is required for $\mcal{M}_{1}(X, L, J, \beta)$ so that $\frak{m}_{0, \beta}(1)$ is given by~\eqref{equ_mobeta1}. Combining~\eqref{equ_deformb+},~\eqref{equ_deformb0}, ~\eqref{equ_mobeta1} and~\eqref{equ_mkbetab6k}, we obtain
\begin{align*}
\frak{m}_0^b(1) &= \sum_{k=0}^\infty \frak{m}_{k,\beta} (b^{\otimes k}) = \rho_{b_0} (\partial \beta)  \cdot \sum_{k=0}^\infty  \frak{m}_{k,\beta} (b_+^{\otimes k}) \\
&=  \rho_{b_0} (\partial \beta) \cdot \sum_{k=0}^\infty \frac{ (\partial \beta \cap b_+)^k }{k!}  \frak{m}_{0, \beta}(1) \\
&= \exp( \partial \beta \cap b_0) \cdot \exp \left( \partial \beta \cap b_+ \right) \cdot n_\beta \cdot T^{\omega(\beta) / 2 \pi} \mathrm{PD}[L].
\end{align*}

\begin{definition}
The \emph{disk potential} of a Lagrangian torus $L$ is the function  
$$
W \colon H^1(L; \Lambda_0) \to \Lambda \quad \mbox{ defined by $\frak{m}_0^b(1) = W(b) \cdot \mathrm{PD}[L]$.}
$$
\end{definition}

Using the chosen basis $(e_i)_{i=1}^m$ for $H^1(L; \Z)$ and setting $y_i \coloneqq \exp (e_i)$, the potential function $W(b)$ can be expressed as a Laurent polynomial 
\begin{equation}\label{equ_diskpotform}
W(\mathbf{y})= \sum_{\beta \in \pi_2(X,L)} n_\beta \cdot \prod_{i=1}^m y_i^{\partial \beta \, \cap \, e_i} \cdot T^{\omega(\beta) / 2 \pi}.
\end{equation}
This Laurent polynomial is also referred to as the \emph{disk potential} of $L$.

\subsection{Stability for moduli spaces of holomorphic disks}\label{sec_stabilityformoduli}

We now return to the setting of Section~\ref{sec_sympredGIT}. In Corollary~\ref{cor_countinginv}, we compared the moduli spaces of holomorphic disks in $X^{ss}$ and in the GIT quotient $X \git \mathbb{G}$. In this subsection, we study the relationship between moduli spaces of holomorphic disks in $X$ and in $X^{ss}$ in~\eqref{equation_twostep}. 

When taking the GIT quotient, unstable points are discarded and only semistable points descend to the quotient. Consequently, not every holomorphic disk in $X$ gives rise to a holomorphic disk in $X \git \mathbb{G}$. This motivates the following notion of stability for moduli spaces.

\begin{definition}
Let $\beta \in \pi_2(X, L)$ be a relative homotopy class.
\begin{itemize}
\item The moduli space $\mcal{M}(X,L,J,\beta)$ is called \emph{semistable} if every holomorphic disk $\varphi \in \mcal{M}(X,L,J,\beta)$ has image contained entirely in the semistable locus, that is,
$$
\varphi(\mathbb{D}) \subseteq X^{ss} \mbox{ for all $\varphi \in \mcal{M}(X, L, J, \beta)$}.
$$
\item The moduli space $\mcal{M}(X,L,J,\beta)$ is called \emph{unstable} if it is \emph{not} semistable.
\end{itemize}
When $(X,L,J)$ is clear from the context, we refer to $\beta$ as semistable or unstable, according to the stability of its moduli space. 
\end{definition}

Let $\iota \colon X^{ss} \to X$ be the inclusion. It induces a homomorphism
$$
\iota_* \colon \pi_2(X^{ss}, L) \to \pi_2(X, L), \quad \alpha \mapsto \iota_* \alpha.
$$
In general, this map is neither injective nor surjective. After compactification, additional homotopy classes whose representatives intersect the unstable locus $X^{us}$ may appear. Moreover, distinct classes in $\pi_2(X^{ss},L)$ may become homotopic in $X$. Nevertheless, if two homotopy classes coincide after compactification, their boundary classes must agree.

\begin{lemma}\label{lemma_boundarysame}
Let $\alpha_1, \alpha_2 \in \pi_2(X^{ss}, L)$. 
If $\iota_* \alpha_1 =  \iota_* \alpha_2$, then $\partial \alpha_1 = \partial \alpha_2$ in $\pi_1(L)$.
\end{lemma}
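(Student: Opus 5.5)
The plan is to use naturality of the boundary map in the long exact sequences of homotopy groups of pairs. The inclusion $\iota \colon (X^{ss}, L) \to (X, L)$ restricts to the identity on $L$. So there is a commutative diagram
$$
\xymatrix{
\pi_2(X^{ss}, L) \ar[r]^{\partial} \ar[d]_{\iota_*} & \pi_1(L) \ar[d]^{=} \\
\pi_2(X, L) \ar[r]_{\partial} & \pi_1(L)
}
$$
where the horizontal arrows are the connecting homomorphisms of the long exact sequences for $(X^{ss}, L)$ and $(X,L)$.

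Commutativity is checked directly on representatives. A class $\alpha \in \pi_2(X^{ss}, L)$ is represented by a map $u \colon (\mathbb{D}, \partial \mathbb{D}, 1) \to (X^{ss}, L, p)$, and $\partial \alpha$ is the class of $u|_{\partial \mathbb{D}}$ in $\pi_1(L,p)$. Then $\iota_*\alpha$ is represented by $\iota \circ u$, whose boundary restriction is the same loop $u|_{\partial\mathbb{D}}$ in $L$. Hence $\partial(\iota_*\alpha) = \partial \alpha$.

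Given $\iota_*\alpha_1 = \iota_*\alpha_2$, the diagram gives
$$
\partial \alpha_1 = \partial(\iota_*\alpha_1) = \partial(\iota_*\alpha_2) = \partial \alpha_2 .
$$

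The only point needing care is basepoints. If $\pi_2$ and $\pi_1$ are taken with a fixed basepoint $p \in L$, the argument above is literal. If homotopy classes are taken unbased, as is customary for disk classes and presumably intended here since $L$ is a torus with abelian $\pi_1$, then boundary classes are understood up to conjugation, i.e. in $H_1(L;\Z) = \pi_1(L)$. A free homotopy in $X$ between $\iota\circ u_1$ and $\iota \circ u_2$ restricts on $\partial \mathbb{D}$ to a free homotopy of loops in $L$. Free homotopy classes coincide with elements of $\pi_1(L)$ for abelian $\pi_1(L)$, so the claim still holds.

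I expect no genuine obstacle beyond this basepoint bookkeeping.
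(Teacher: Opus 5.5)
Your proof is correct and takes the same approach as the paper: naturality of the connecting homomorphism, $\partial(\iota_*\alpha) = \iota_*(\partial\alpha)$, combined with the fact that $\iota$ restricts to the identity on $L$ (the paper phrases this as $\iota_*$ being an isomorphism on $\pi_1(L)$). Your remark on basepoints is a valid clarification that the paper leaves implicit.
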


\begin{proof}
Since $\iota_*$ induces an isomorphism on $\pi_1(L)$, we have
$$
\iota_* (\partial \alpha_1) = \partial (\iota_* \alpha_1) = \partial (\iota_* \alpha_2) = \iota_* (\partial \alpha_2).
$$  
\end{proof}

Since our primary interest lies in disk counting invariants, we introduce the sets
\begin{itemize}
\item
$\Pi(X,L,J) \coloneqq \{\beta \in \pi_2(X,L) \mid \mathcal{M}(X,L,J,\beta) \neq \emptyset  \}$,
\item
$\Pi^{(2)}(X,L,J) \coloneqq \{\beta \in \Pi(X,L,J) \mid \mu(\beta) = 2 \}$.
\end{itemize} 
When the choice of $J$ is fixed, we simply write $\Pi(X,L) \coloneqq \Pi(X,L,J)$ and $\Pi^{(2)}(X, L) \coloneqq \Pi^{(2)}(X,L,J)$. Let $\beta \in \pi_2(X, L)$ be a semistable homotopy class. For every holomorphic disk $\varphi \in \mathcal{M}(X,L,J,\beta)$, there exists a holomorphic disk $\varphi^\prime$ in $X^{ss}$ such that $\varphi = \iota \circ \varphi^\prime$. We define 
$$
I^{ss}_\beta \coloneqq \{ \alpha \in \Pi(X^{ss}, L) \mid \iota_* \alpha = \beta\}.
$$ 
For any $\varphi \in \mcal{M}(X^{ss},L,J,\alpha)$ with $\iota_*\alpha=\beta$, the bundle pairs $((\iota \circ \varphi)^* TX,(\iota \circ \varphi)^* TL)$ and $(\varphi^* TX^{ss}, \varphi^* TL)$ are canonically identified. It follows that
\begin{enumerate}
\item the Maslov index of $\alpha$ is equal to that of $\iota_* \alpha$, and
\item if $\beta$ is regular, then $\alpha$ is also regular.
\end{enumerate} 
Assume furthermore that $L$ is spin and equip $L$ with the same spin structure in both $X$ and $X^{ss}$. We then obtain the following proposition.

\begin{proposition}\label{proposition_coprodmapsto}
Let $\beta \in \Pi^{(2)}(X,L)$ be regular and semistable. Then the map 
$$
\iota_* \colon \coprod_{ \alpha \in I^{ss}_\beta} \mcal{M}(X^{ss}, L, J, \alpha) \to \mcal{M}(X, L, J, \beta), \quad [\varphi] \mapsto [\iota \circ \varphi].
$$
is an orientation-preserving diffeomorphism.
\end{proposition}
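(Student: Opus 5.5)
The plan is to check that the map is well defined, bijective, a local diffeomorphism, and orientation preserving, in that order.

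\emph{Well-definedness.} For $\varphi \in \widetilde{\mcal{M}}(X^{ss},L,J,\alpha)$ with $\alpha \in I^{ss}_\beta$, the composite $\iota\circ\varphi$ is $J$-holomorphic, since $X^{ss}$ is open in $X$ and carries the restricted complex structure. Its class is $\iota_*\alpha = \beta$. Reparametrization by $\mathrm{Aut}(\mathbb{D})$ commutes with composition by $\iota$, so the map descends to the quotients by $\mathrm{Aut}(\mathbb{D})$.

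\emph{Bijectivity.} For surjectivity, let $[\psi] \in \mcal{M}(X,L,J,\beta)$. Semistability of $\beta$ gives $\psi(\mathbb{D}) \subseteq X^{ss}$, so $\psi = \iota\circ\psi'$ for a holomorphic disk $\psi'$ in $X^{ss}$ with boundary on $L$. Its class $\alpha = [\psi']$ satisfies $\iota_*\alpha = \beta$, and $\mcal{M}(X^{ss},L,J,\alpha) \ni [\psi']$ is nonempty, so $\alpha \in I^{ss}_\beta$.

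For injectivity, suppose $[\iota\circ\varphi_1] = [\iota\circ\varphi_2]$, with $[\varphi_i]$ in the component indexed by $\alpha_i$. Then $\iota\circ\varphi_2 = \iota\circ\varphi_1\circ\sigma$ for some $\sigma \in \mathrm{Aut}(\mathbb{D})$. Since $\iota$ is injective, $\varphi_2 = \varphi_1\circ\sigma$. Hence $\varphi_1$ and $\varphi_2$ represent the same element of $\pi_2(X^{ss},L)$, so $\alpha_1 = \alpha_2$ and $[\varphi_1] = [\varphi_2]$. The point of this step is that distinct classes $\alpha$ cannot both contain a representative of the same disk, so the union really is disjoint. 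Lemma~\ref{lemma_boundarysame} is not needed here.

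\emph{Local diffeomorphism.} By the remarks preceding the statement, each $\alpha \in I^{ss}_\beta$ is regular of Maslov index $2$. So both sides are smooth manifolds of the same dimension $m+2-3$, and every $\mcal{M}(X^{ss},L,J,\alpha)$ is open in the domain. The key observation is that $X^{ss} \subseteq X$ is open. Hence the Fredholm setups agree: the linearized operators $D_\varphi\overline{\partial}$ and $D_{\iota\circ\varphi}\overline{\partial}$ act on the canonically identified bundle pairs $(\varphi^*TX^{ss},\varphi^*TL)=((\iota\circ\varphi)^*TX,(\iota\circ\varphi)^*TL)$. They therefore have the same kernel, which gives the same Zariski tangent spaces. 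Moreover, a $C^\infty$-small (in fact $W^{1,p}$-small) neighborhood of $\iota\circ\varphi$ in the space of maps consists of maps landing in $X^{ss}$, because $\varphi(\mathbb{D})$ is compact and lies in the open set $X^{ss}$. It follows that $\iota_*$ identifies a neighborhood of $[\varphi]$ with a neighborhood of $[\iota\circ\varphi]$ diffeomorphically. A smooth bijective local diffeomorphism is a diffeomorphism.

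\emph{Orientation.} The orientation of each side is produced, as recalled before Theorem~\ref{theorem_fundacycle}, from the trivialization of $TL$ over the $1$-skeleton determined by the spin structure, together with the complex orientation of the sphere part after pinching. The spin structure on $L$ is the same for $X$ and $X^{ss}$, and the bundle pairs and the pinched sphere bundles $\varphi^*TX^{ss}=\varphi^*TX$ are canonically identified. So the determinant-line orientations coincide pointwise under $d\iota_*$. Transporting along paths within a component preserves this agreement.

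I expect the main obstacle to be this last point. One has to make sure the path-based orientation construction of \cite{FOOObook, Cho04} on each component $\mcal{M}(X^{ss},L,J,\alpha)$ uses exactly the data that is transported to $X$. This should follow because the orientation at each point depends only on the bundle pair and the spin structure, not on the ambient homotopy class.
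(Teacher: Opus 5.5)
Your proposal is correct and takes essentially the paper's approach. The paper gives no separate proof: the proposition rests on the remarks just before it. Those remarks are that the bundle pairs $(\varphi^*TX^{ss},\varphi^*TL)$ and $((\iota\circ\varphi)^*TX,(\iota\circ\varphi)^*TL)$ are canonically identified, which gives equal Maslov index and transfers regularity, that $L$ carries the same spin structure in $X$ and in $X^{ss}$, and that semistability forces every disk in $\beta$ to factor through $\iota$. Your argument is a careful expansion of exactly these points, with bijectivity, the local-diffeomorphism step via openness of $X^{ss}$, and the orientation comparison made explicit.
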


We obtain the following corollaries.

\begin{corollary}\label{cor_nbetanalpha}
Let $\beta \in \Pi^{(2)}(X,L)$ be regular and semistable. Assume that $\mcal{M}(X, L, J, \beta) = \overline{\mcal{M}}(X, L, J, \beta)$. Then
$$
n_\beta = \sum_{\alpha \in I^{ss}_\beta} n_{\alpha}.
$$
\end{corollary}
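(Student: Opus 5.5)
The plan is to deduce the equality of counts directly from Proposition~\ref{proposition_coprodmapsto}, after passing to moduli spaces with one boundary marked point.

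First, for each $\alpha \in I^{ss}_\beta$, the Maslov index of $\alpha$ equals that of $\beta$, and $\alpha$ is regular. So each $\mcal{M}_1(X^{ss},L,J,\alpha)$ is a smooth manifold of dimension $\dim L$.

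Second, we upgrade the diffeomorphism of Proposition~\ref{proposition_coprodmapsto} to the marked moduli spaces. The map $[(\varphi,z_0)] \mapsto [(\iota\circ\varphi,z_0)]$ is compatible with the $\mathrm{Aut}(\mathbb{D})$-quotients and with the convention $\widetilde{\mcal{M}}_1 \simeq \widetilde{\mcal{M}}\times\partial\mathbb{D}$. It therefore gives an orientation-preserving diffeomorphism
$$
\coprod_{\alpha\in I^{ss}_\beta}\mcal{M}_1(X^{ss},L,J,\alpha)\to \mcal{M}_1(X,L,J,\beta).
$$
This map intertwines the evaluation maps $\mathrm{ev}_0$, since both sides evaluate to the same point $\varphi(z_0)\in L$.

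Third, we check that the degrees are defined and add up correctly. The target $\mcal{M}_1(X,L,J,\beta)$ is compact by the hypothesis $\mcal{M}(X,L,J,\beta)=\overline{\mcal{M}}(X,L,J,\beta)$. Its image under the diffeomorphism splits into the open and closed pieces $\mcal{M}_1(X^{ss},L,J,\alpha)$. Hence each piece is compact, and only finitely many of them are nonempty. Thus each $n_\alpha$ is a well-defined degree. Degree is additive over disjoint unions and invariant under orientation-preserving diffeomorphisms commuting with $\mathrm{ev}_0$, so $n_\beta=\deg \mathrm{ev}_0 = \sum_\alpha n_\alpha$.

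The main obstacle is justifying that each $\mcal{M}_1(X^{ss},L,J,\alpha)$ is itself compact. A priori, disks in $X^{ss}$ could degenerate toward the unstable locus. This is exactly what the diffeomorphism rules out: any limit in $X$ lies in $\mcal{M}(X,L,J,\beta)$, which is semistable, so the limit disk stays inside $X^{ss}$. I also need that the limit disk stays in the same class $\alpha$. This holds because the components are open and closed in the image, and the relative homotopy class is locally constant under $C^\infty$ convergence in $X^{ss}$.
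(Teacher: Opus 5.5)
Your proposal is correct and follows the paper's route: the paper states this corollary as an immediate consequence of Proposition~\ref{proposition_coprodmapsto} and gives no further argument. You fill in the steps the paper leaves implicit, namely passing to one boundary marked point so that the diffeomorphism intertwines $\mathrm{ev}_0$, deducing compactness of each $\mathcal{M}_1(X^{ss},L,J,\alpha)$ and finiteness of $I^{ss}_\beta$ from compactness of the target, and using additivity of degree over the disjoint union.
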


Suppose moreover that the induced map $\iota_* \colon \Pi^{(2)} (X^{ss}, L) \to \Pi^{(2)} (X, L)$ is injective. Then, for each $\beta \in \Pi^{(2)}(X,L)$, there exists a unique class $\alpha \in \Pi^{(2)}(X^{ss}, L)$ such that $\iota_* (\alpha) = \beta$. Equivalently, $\#(I^{ss}_\beta) = 1$.

\begin{corollary}
For $\alpha \in \Pi^{(2)} (X^{ss}, L)$, suppose that $\iota_* \alpha$ is regular and semistable. Assume that $\mcal{M}(X, L, J, \beta) = \overline{\mcal{M}}(X, L, J, \beta)$ and
\begin{equation}\label{equ_inclusionofeffdiskclasses}
\iota_* \colon \Pi^{(2)} (X^{ss}, L) \to \Pi^{(2)} (X, L) 
\end{equation}
is injective. Then 
$$
n_\alpha = n_{\iota_*(\alpha)}.
$$
\end{corollary}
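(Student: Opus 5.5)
The plan is to derive this directly from Corollary~\ref{cor_nbetanalpha}, with injectivity used to collapse the sum to a single term. Set $\beta \coloneqq \iota_* \alpha$. Since $\alpha \in \Pi^{(2)}(X^{ss},L)$, the moduli space $\mcal{M}(X^{ss},L,J,\alpha)$ is nonempty. Composing any of its members with $\iota$ produces a holomorphic disk in $X$ in class $\beta$. Hence $\mcal{M}(X,L,J,\beta) \neq \emptyset$.

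Next, the bundle pairs for $\varphi$ and $\iota\circ\varphi$ are canonically identified, as noted before Proposition~\ref{proposition_coprodmapsto}. So the Maslov index of $\beta$ is two and $\beta \in \Pi^{(2)}(X,L)$. By hypothesis $\beta$ is regular and semistable, and its moduli space is compact. Thus all hypotheses of Corollary~\ref{cor_nbetanalpha} hold, and we get
$$
n_{\iota_*\alpha} = \sum_{\alpha' \in I^{ss}_\beta} n_{\alpha'}.
$$

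It remains to show that $I^{ss}_\beta = \{\alpha\}$. Clearly $\alpha \in I^{ss}_\beta$. Now take any $\alpha' \in I^{ss}_\beta$, so $\alpha' \in \Pi(X^{ss},L)$ and $\iota_*\alpha' = \beta$. Its Maslov index equals that of $\iota_*\alpha' = \beta$, namely two, so $\alpha' \in \Pi^{(2)}(X^{ss},L)$. Injectivity of \eqref{equ_inclusionofeffdiskclasses} then forces $\alpha' = \alpha$. The sum therefore reduces to $n_\alpha$, which proves the claim.

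The only delicate point is bookkeeping rather than analysis. One must check that elements of $I^{ss}_\beta$ automatically have Maslov index two, so that injectivity on $\Pi^{(2)}$ is enough; this uses the Maslov-index compatibility under $\iota$. The analytic input, the orientation-preserving diffeomorphism of Proposition~\ref{proposition_coprodmapsto} and compactness, is already built into Corollary~\ref{cor_nbetanalpha}. If needed, one can also note that each $\mcal{M}_1(X^{ss},L,J,\alpha')$ is open and closed in the compact $\mcal{M}_1(X,L,J,\beta)$, so $n_{\alpha'}$ is well defined.
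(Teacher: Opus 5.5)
Your proposal is correct and is essentially the paper's argument: just before the statement, the paper observes that injectivity of $\iota_*$ on $\Pi^{(2)}$ forces $\#(I^{ss}_\beta)=1$ for $\beta=\iota_*\alpha$, so the sum in Corollary~\ref{cor_nbetanalpha} collapses to $n_\alpha$. Your explicit check that every element of $I^{ss}_\beta$ automatically has Maslov index two, so that injectivity on $\Pi^{(2)}$ suffices, fills in a bookkeeping step the paper leaves implicit.
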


Because of Proposition~\ref{proposition_coprodmapsto}, for a semistable class $\beta \in \pi_2(X,L)$, we introduce the notation
\begin{equation}\label{equ_mXssnot}
\mcal{M}(X^{ss}, L, J, \beta) \coloneqq \coprod_{ \alpha \in I^{ss}_\beta} \mcal{M}(X^{ss}, L, J, \alpha).
\end{equation}

\subsection{Semistable disk potential and GIT quotients}

Our goal is to compare the disk potential of $L$ in $X$ with that of $K = L / G$ in the GIT quotient $M \coloneqq X \git \mathbb{G}$. If a homotopy class $\beta \in \pi_2(X, L)$ is unstable, then there exists a holomorphic disk $\varphi \in \mathcal{M}(X, L, J, \beta)$ whose image intersects the unstable locus $X^{us}$. Such a disk disappears upon restriction to the semistable locus $X^{ss}$ and therefore does not descend to a holomorphic disk in $M$.  These disks may cause compactness issues$\colon$ even if $\mathcal{M}(X, L, J, \beta)$ is compact, the corresponding moduli space in $X^{ss}$ need not be compact.

To exclude this bad situation, we introduce the following notion.

\begin{definition}
A relative homotopy class $\beta \in \pi_2(X,L)$ is called \emph{fully excluded} if every holomorphic disk in $\mcal{M}(X, L, J, \beta)$ intersects the unstable locus, that is,
$$
\varphi(\mathbb{D}) \cap X^{us} \neq \emptyset \mbox{ for all $\varphi \in \mcal{M}(X, L, J, \beta)$}.
$$
\end{definition}

From now on, we assume that $L$ is torus, $G$ is a torus, and the quotient $K \coloneqq L/G$ is also a torus. Let $(\theta_i)_{i=1}^m$ be a basis of $H_1(L;\Z) \simeq \pi_1(L)$ and consider the induced homomorphism 
$$
q_* \colon \pi_1(L) \to \pi_1(K).
$$
Since the $G$-action is free, the long exact sequence of homotopy groups associated with the principal $G$-bundle implies that the kernel of $q_*$ is a primitive sublattice. Since 
$$
\pi_1(L) \simeq \pi_1(K) \times \pi_1(G)
$$
After replacing the basis $(\theta_i)_{i=1}^m$ if necessary, we may assume that 
\begin{enumerate}
\item $(\theta_i)_{i={s+1}}^m$ is a basis for $\mathrm{ker}(q_*)$ and
\item $(\theta_i)_{i=1}^s$ projects to a basis for $\pi_1(K)$.
\end{enumerate}

Let $(f_i)_{i=1}^s$ be the dual basis for $H^1(K; \Z)$. Extending this basis, we obtain a dual basis $(e_i)_{i=1}^m$ of $H^1(L;\Z)$ satisfying $q^* f_i =  e_i$ for $i = 1, \dots, s$. Let $\mathbf{y} = (y_i)$ (resp. $\mathbf{z} = (z_i)$) be the variables corresponding to $\exp(e_i)$ for $L$ (resp. $\exp(f_i)$ for $K$). Then the disk potential of $L$ is expressed as a Laurent polynomial in the variables $\mathbf{y}$.

We now define the \emph{semistable disk potential} $W_L^{ss}$ of $L$. Recall that $n_\beta$ can be nonzero only if $\mu_L (\beta) = 2$. We classify all semistable disk classes $\beta$ with $\mu_L (\beta) = 2$. We restrict attention to semistable disk classes of Maslov index two.

\begin{definition}
The \emph{semistable disk potential} $W_L^{ss}$ of $L$ is defined by
$$
W_L^{ss}(\mathbf{y}) \coloneqq \sum_{\substack{\beta \in \pi_2(X,L);\\ \beta \mbox{\scriptsize{ is semistable}}}} n_\beta \cdot \prod_{i=1}^m y_i^{\partial \beta \, \cap \,  e_i} \cdot T^{\omega(\beta) / 2 \pi}.
$$
\end{definition}

Our choice of basis determines the relations
\begin{equation}\label{equ_choiceofexp}
\begin{cases}
z_i = y_i &\mbox{ for $i = 1, \cdots, s$} \\
y_i = 1 &\mbox{ for $i = s+1, \cdots, m$.}
\end{cases}
\end{equation}
Substituting these relations into $W_L^{ss}$ yields a Laurent polynomial in the variables $\mathbf{z}$, which we denote by
$$
W_L^{ss} |_{\mathrm{ker}(q_*)} (\mathbf{z}).
$$

We state the main theorem of this section, which computes the disk potential of $K$.

\begin{theorem}[Theorem~\ref{thmx_diskGIT}]\label{theorem_diskGIT}
Let $G$ be a torus and let $\mathbb{G}$ denote its complexification. Suppose that $G$ acts linearly on a complex vector space $V$. Let $X \subseteq \mathbb{P}(V)$ be a $\mathbb{G}$-invariant projective variety. Let $\mu \colon X \to \frak{g}^*$ be a moment map of the Hamiltonian $G$-action on $X$. Let $L$ be an  oriented $G$-invariant Lagrangian torus of $X$ contained in $\mu^{-1}(0)$. Assume that
\begin{enumerate}
\item $G$ acts freely on the level set $\mu^{-1}(0)$ (and hence on $L$),
\item $L$ is positive and is endowed with the product spin structure induced by the spin structure on $L/G$,
\item $J$ is regular and every unstable homotopy class $\beta \in \Pi^{(2)}(X, L)$ is fully excluded, 
\item $\mathrm{ev}_0 \colon \mathcal{M}_1(X,L, J, \beta) \to L$ is submersive for every $\beta \in \Pi^{(2)}(X^{ss}, L)$,
\item both $X$ and $X \git G \simeq X \git \mathbb{G}$ are symplectically Fano.
\end{enumerate}
Then the disk potential of ${L/G}$ is equal to $W_L^{ss} |_{\mathrm{ker}(q_*)} (\mathbf{z})$, that is,
$$
W_{L / G} (\mathbf{z}) = W_L^{ss} |_{\mathrm{ker}(q_*)} (\mathbf{z}).
$$
\end{theorem}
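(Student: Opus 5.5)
We compare both sides monomial by monomial. By the formula~\eqref{equ_diskpotform}, $W_{L/G}(\mathbf{z})$ is a sum over $\gamma \in \Pi^{(2)}(M,K)$ of $n_\gamma \prod_i z_i^{\partial\gamma \cap f_i} T^{\omega_M(\gamma)/2\pi}$. This is valid because $K$ is positive and $M$ is symplectically Fano, with regularity and submersivity transported from $L$. Positivity of $K$ follows from Lemma~\ref{lemma_qregular}(2) together with the lifting of Proposition~\ref{proposition_existenceoflift}: every nonconstant holomorphic disk in $(M,K)$ lifts to a nonconstant disk in $(X^{ss},L)\subseteq(X,L)$ of the same Maslov index, which is therefore at least two. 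Regularity of $q_*\alpha$ follows from Lemma~\ref{lemma_qregular}(1) applied to $\alpha \in \Pi^{(2)}(X^{ss},L)$, whose regularity comes from that of $\iota_*\alpha$ in $X$. Submersivity of $\mathrm{ev}_0$ on the quotient follows from the commutative diagram~\eqref{equ_commutativitiyofcompletelyinte2} together with $G$-equivariance.

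The first step is to match the index sets. Lemma~\ref{lemma_isoonhomotopy} gives a bijection $q_* \colon \pi_2(X^{ss},L)\to\pi_2(M,K)$. Combining it with Proposition~\ref{proposition_existenceoflift}, Lemma~\ref{lemma_qregular}(2) and the surjectivity in Corollary~\ref{corollary_ontophi}, this bijection restricts to $\Pi^{(2)}(X^{ss},L)\cong\Pi^{(2)}(M,K)$.

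The second step is compactness. For $\alpha\in\Pi^{(2)}(X^{ss},L)$, I need $\mathcal{M}_1(X^{ss},L,J,\alpha)$ to be compact so that Theorem~\ref{theorem_fundacycle} and Corollary~\ref{cor_countinginv} apply. Its image $\beta=\iota_*\alpha$ lies in $\Pi^{(2)}(X,L)$ and contains a disk in $X^{ss}$, so $\beta$ is not fully excluded. By hypothesis (3), $\beta$ is therefore semistable. Positivity and the Fano condition on $X$ give $\mathcal{M}_1(X,L,J,\beta)=\overline{\mathcal{M}}_1(X,L,J,\beta)$. Proposition~\ref{proposition_coprodmapsto} identifies this space with the disjoint union over $I^{ss}_\beta$ of the corresponding moduli spaces in $X^{ss}$, so each component, in particular the one for $\alpha$, is compact. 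Corollary~\ref{cor_countinginv} then gives $n_\alpha=n_{q_*\alpha}$, and Corollary~\ref{cor_nbetanalpha} gives $n_\beta=\sum_{\alpha\in I^{ss}_\beta}n_\alpha$.

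The third step compares the exponents. Symplectic areas agree because $\omega|_{P_G}=q^*\omega_M$ and a representative of $\alpha$ can be homotoped rel boundary into $\mu^{-1}(0)$ via the deformation retraction $X^{ss}\simeq \mu^{-1}(0)\times_G\mathbb{G}\to\mu^{-1}(0)$. Also $\omega$ is closed and $\iota^*$ preserves area, so $\omega(\beta)=\omega(\alpha)=\omega_M(q_*\alpha)$. For the boundary classes, $\partial(q_*\alpha)=q_*\partial\alpha$. With the basis normalization $q^*f_i=e_i$ for $i\le s$, we get $\partial\alpha\cap e_i=\partial(q_*\alpha)\cap f_i$ for $i\le s$. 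Setting $y_i=1$ for $i>s$ kills exactly the $\ker(q_*)$ components. Hence the monomial of $\beta$ in $W^{ss}_L$ restricts to $\sum_{\alpha\in I^{ss}_\beta}$ of the monomials of $q_*\alpha$ in $W_{L/G}$. Here Lemma~\ref{lemma_boundarysame} guarantees that all $\alpha\in I^{ss}_\beta$ have the same boundary, hence the same monomial as $\beta$. Summing over semistable $\beta$, whose fibers $I^{ss}_\beta$ partition $\Pi^{(2)}(X^{ss},L)$, yields the identity.

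The main obstacle is the second step. I must make sure that no sequence of disks in $X^{ss}$ escapes to the unstable locus, and that every semistable class really has its whole moduli space in $X^{ss}$; this is exactly where the fully excluded hypothesis is used. A subtle point is that a semistable $\beta$ might only have disks which are constant or of Maslov index different from two. Restricting attention to $\Pi^{(2)}$ and using the Maslov identification avoids this.
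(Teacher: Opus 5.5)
Your proposal is correct and follows essentially the same route as the paper. You transfer positivity, regularity and submersivity to $K=L/G$ and use the bijection $q_*$ of Lemma~\ref{lemma_isoonhomotopy}; you then get compactness in $X^{ss}$ from the fully-excluded hypothesis together with Proposition~\ref{proposition_coprodmapsto}, and match coefficients and monomials via Corollaries~\ref{cor_countinginv} and~\ref{cor_nbetanalpha} and Lemma~\ref{lemma_boundarysame}. Your deformation-retraction argument for $\omega(\alpha)=\omega_M(q_*\alpha)$ is simply a more explicit version of the paper's appeal to the definition of the reduced symplectic form.
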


Before proving Theorem~\ref{theorem_diskGIT}, we collect several auxiliary lemmas.

\begin{lemma}\label{lemma_positivity}
The Lagrangian $L$ is positive in $X^{ss}$ if and only if $K = L/G$ is also positive in $M$. 
\end{lemma}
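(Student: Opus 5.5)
The plan is to compare nonempty moduli spaces class by class using the two bijections already established, and to use that Maslov indices agree. Positivity of $L$ in $X^{ss}$ means: every nonzero $\alpha\in\pi_2(X^{ss},L)$ with $\mathcal{M}(X^{ss},L,J,\alpha)\neq\emptyset$ has $\mu(\alpha)\geq 2$. Positivity of $K$ in $M$ is the analogous statement for $\underline\alpha\in\pi_2(M,K)$ with $\mathcal{M}(M,K,J_M,\underline\alpha)\neq\emptyset$. By Lemma~\ref{lemma_isoonhomotopy}, $q_*\colon \pi_2(X^{ss},L)\to\pi_2(M,K)$ is an isomorphism. In particular, $\alpha\neq 0$ if and only if $q_*\alpha\neq 0$. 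By Lemma~\ref{lemma_qregular}(2), $\mu(\alpha)=\mu(q_*\alpha)$. So it suffices to show that
$$
\mathcal{M}(X^{ss},L,J,\alpha)\neq\emptyset \iff \mathcal{M}(M,K,J_M,q_*\alpha)\neq\emptyset .
$$

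For ($\Rightarrow$): if $\varphi$ is a $J$-holomorphic disk in class $\alpha$, then $q\circ\varphi$ is $J_M$-holomorphic, since $q$ is holomorphic. It has boundary on $q(L)=K$ and lies in class $q_*\alpha$.

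For ($\Leftarrow$): given $\underline\varphi$ in class $q_*\alpha$, Proposition~\ref{proposition_existenceoflift} produces a holomorphic lift $\varphi\colon(\mathbb{D},\partial\mathbb{D})\to(X^{ss},L)$ with $q\circ\varphi=\underline\varphi$. Its class $[\varphi]$ satisfies $q_*[\varphi]=q_*\alpha$. Injectivity of $q_*$ from Lemma~\ref{lemma_isoonhomotopy} then gives $[\varphi]=\alpha$, so the moduli space upstairs is nonempty.

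None of these steps requires regularity or compactness. Only existence of lifts and the homotopy and Maslov comparisons are used. The one point to check carefully is that Proposition~\ref{proposition_existenceoflift} is stated for arbitrary $J_M$-holomorphic disks with no class restriction. Its proof via Donaldson's Theorem~\ref{theorem_Don} indeed uses only the pullback bundle over $\mathbb{D}$. I also need the lifted disk to lie in $X^{ss}=P_{\mathbb{G}}$, which holds by construction.

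Combining these steps, the set of Maslov indices of nonzero effective classes is identical on both sides, so the two positivity conditions are equivalent. I expect the main subtlety to be bookkeeping: in Assumption~\ref{assumption_pgpmathbbg}, $J_M$ is the complex structure induced from $q$ (written $q_*J$ in the introduction). This ensures that holomorphicity is preserved under both projection and lifting.
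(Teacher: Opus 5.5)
Your proof is correct and follows the paper's argument. For the direction from $L$ to $K$, you lift disks via Proposition~\ref{proposition_existenceoflift} and match Maslov indices by Lemma~\ref{lemma_qregular}; for the converse, you project disks by $q$. Your extra use of the isomorphism $q_*$ from Lemma~\ref{lemma_isoonhomotopy} makes explicit the correspondence of nonzero effective classes, which the paper handles implicitly through nonconstancy of the disks.
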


\begin{proof}
Suppose that there exists a nonconstant holomorphic disk in $M$ with boundary on $K$ and Maslov index $\leq 0$. By Proposition~\ref{proposition_existenceoflift}, such a disk admits a holomorphic lift to $X^{ss}$ and by Lemma~\ref{lemma_qregular}, the lifted disk has the same Maslov index. This contradicts the positivity of $L$ in $X^{ss}$. Hence $K$ is positive. The converse immediately follows from Lemma~\ref{lemma_qregular}.
\end{proof}

\begin{lemma}\label{lemma_submerstive}
The evaluation map $\mathrm{ev}_0 \colon \mathcal{M}_1(P_\mathbb{G}, L,J,\beta) \to L$ is submersive if and only if $\mathrm{ev}_0 \colon \mathcal{M}_1(M,K,J,\beta) \to K$ is submersive.
\end{lemma}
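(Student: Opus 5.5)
The plan is to compare the two evaluation maps through the commutative diagrams \eqref{equ_commutativitiyofcompletelyinte2} and \eqref{equ_commutativitiyofcompletelyinte3}, reducing everything to linear algebra on tangent spaces. By Proposition~\ref{proposition_freenessonmoduli}, the quotient map $\mathcal{M}_1(P_\mathbb{G},L,J,\beta) \to \mathcal{M}_1(P_\mathbb{G},L,J,\beta)/G$ is a principal $G$-bundle, and $q \colon L \to K$ is a principal $G$-bundle. The map $\mathrm{ev}_0$ on $\mathcal{M}_1(P_\mathbb{G},L,J,\beta)$ is $G$-equivariant and covers the induced map $\overline{\mathrm{ev}}_0 \colon \mathcal{M}_1(P_\mathbb{G},L,J,\beta)/G \to K$. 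By Corollary~\ref{corollary_ontophi}, Lemma~\ref{lemma_onetoonephi} and Theorem~\ref{theorem_fundacycle}, $\phi$ identifies $\overline{\mathrm{ev}}_0$ with $\mathrm{ev}_0 \colon \mathcal{M}_1(M,K,J_M,q_*\beta) \to K$. Throughout, regularity holds on one side if and only if it holds on the other by Lemma~\ref{lemma_qregular}, so both moduli spaces are smooth. I would also upgrade $\phi$ to a diffeomorphism: its differential is the map on kernels of linearized operators induced by \eqref{equ_sesofbundlepairs}, and the long exact sequence in the proof of Lemma~\ref{lemma_qregular} shows that this map is surjective with kernel $H^0(\varphi^*VP_\mathbb{G},\varphi^*VL)\simeq\frak{g}$. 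That kernel is exactly the infinitesimal $G$-action.

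For the submersion statement itself, I would fix $x=[(\varphi,z_0)]$ and use the equivariance. The fundamental vector fields give $d\,\mathrm{ev}_0(\xi_x) = \xi_{\mathrm{ev}_0(x)}$ for $\xi \in \frak{g}$, and these span the vertical space $V_{\mathrm{ev}_0(x)}L = \ker dq$. Consider the commutative square of differentials:
$$T_x\mathcal{M}_1(P_\mathbb{G}) \to T_{\varphi(z_0)}L \to T K$$
factoring through $T_{[x]}(\mathcal{M}_1/G) \to TK$. If the top map is onto, then composing with the surjection $dq$ shows that $d\overline{\mathrm{ev}}_0$ is onto. Conversely, suppose $d\overline{\mathrm{ev}}_0$ is onto. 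Given $v \in T L$, I would lift $dq(v)$ to some $w\in T_x\mathcal{M}_1$, using surjectivity of the projection to the quotient. Then $v - d\mathrm{ev}_0(w)$ lies in $VL$, hence equals $d\mathrm{ev}_0(\xi_x)$ for some $\xi\in\frak{g}$. This gives surjectivity of $d\,\mathrm{ev}_0$. Transporting through the diffeomorphism $\phi$ finishes the argument.

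The main obstacle is to justify that $\phi$ is a local diffeomorphism, not merely a homeomorphism, and that it is compatible with the evaluation maps on tangent spaces. For this I would rely on the identification of tangent spaces with $H^0$ of the pulled-back bundle pairs (modulo $\mathrm{Aut}(\mathbb{D})$ together with the marked point direction), combined with the exact sequence from Lemma~\ref{lemma_qregular}. Evaluation at $z_0$ commutes with the bundle projection $TP_\mathbb{G}\to q^*TM$, so the diagrams commute at the level of differentials.
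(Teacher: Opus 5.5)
Your proposal is correct and is essentially the paper's argument: the paper simply combines the splitting $T_pL \simeq T_{q(p)}K \oplus \frak{g}$ (with the vertical directions hit by the infinitesimal $G$-action) with the commutative diagrams \eqref{equ_commutativitiyofcompletelyinte2} and \eqref{equ_commutativitiyofcompletelyinte3}. Your extra check that $\phi$ is a local diffeomorphism fills in a step the paper leaves implicit, and it is needed because a smooth homeomorphism alone would not transport submersivity; it shows that $d\widehat{\phi}$ is onto since $H^1(\varphi^*VP_\mathbb{G},\varphi^*VL)=0$ and has kernel $H^0(\varphi^*VP_\mathbb{G},\varphi^*VL)\simeq\frak{g}$. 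Note that this uses the long exact sequence of \eqref{equ_sesofbundlepairs} in its covariant order, whereas the paper prints it with the arrows reversed.
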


\begin{proof}
By construction, $T_p L \simeq T_{q(p)}K \oplus \frak{g}$. The claim therefore follows from the commutative diagrams~\eqref{equ_commutativitiyofcompletelyinte2} and~\eqref{equ_commutativitiyofcompletelyinte3}.
\end{proof}

\begin{proof}[Proof of Theorem~\ref{theorem_diskGIT}]
Let $\beta \in \Pi^{(2)}(X^{ss},L)$. Since $L$ is positive and $J$ is regular, we have
$$
\mcal{M}_1(X,L,J, \iota_* \beta) = \overline{\mcal{M}}_1(X,L,J,\iota_* \beta)
$$ 
and this moduli space is a compact manifold without boundary of dimension $L$. By Proposition~\ref{proposition_coprodmapsto} and~\eqref{equ_mXssnot}, the inclusion map $\iota \colon X^{ss} \hookrightarrow X$ induces an orientation preserving diffeomorphism between $\mcal{M}_1(X^{ss},L,J, \iota_* \beta)$ and $\mcal{M}_1(X,L,J, \iota_*\beta)$.

The Lagrangian torus $L$ is positive in $X^{ss}$. If there existed a nonconstant holomorphic disk in $X^{ss}$ with Maslov index $\leq 0$, then its image under $\iota$ would define a holomorphic disk in $X$ with the same Maslov index, contradicting the positivity of $L$ in $X$. By Lemma~\ref{lemma_positivity}, the quotient torus $K = L/G$ is also positive. Therefore the disk potential of $K$ is well-defined. By Lemma~\ref{lemma_isoonhomotopy} and~\eqref{equ_diskpotform}, it takes the form
\begin{equation}\label{equ_WL/G}
W_{L / G} (\mathbf{z}) = \sum_{\beta \in \pi_2(X^{ss}, L)} n_{q_* \beta} \cdot \prod_{i=1}^s z_i^{\partial (q_*\beta) \, \cap \, f_i} \cdot T^{\omega(q_* \beta) / 2 \pi}.
\end{equation}

Since $M$ is symplectically Fano, sphere bubbling does not contribute to $W_{L / G}$. Hence Theorem~\ref{theorem_fundacycle}, Corollary~\ref{cor_countinginv}, and Lemma~\ref{lemma_submerstive} imply that
\begin{equation}\label{equ_openGWcorres}
n_\beta = n_{q_* \beta}.
\end{equation}
Moreover, by Lemma~\ref{lemma_qregular} and the definition of the reduced symplectic form,
\begin{equation}\label{equ_omegamu}
\mu(q_* \beta) = \mu(\beta) \mbox{ and } \omega(q_* \beta) = \omega(\beta).
\end{equation}

Since $q^* \colon \Z \langle f_1, \cdots, f_s \rangle \to \Z \langle e_1, \cdots, e_s \rangle$ is isomorphic, we have
$$
\partial (q_* \beta) \cap f_i = q_*(\partial \beta \cap q^* f_i)  = q_*(\partial \beta \cap e_i) \,\, \mbox{ for $i =1, \cdots, s$.}
$$
Hence
\begin{equation}\label{equ_monomials}
\prod_{i=1}^s z_i^{\partial (q_* \beta) \, \cap \, f_i } = \prod_{i=1}^s z_i^{q_*(\partial \beta \, \cap \, e_i)} = \prod_{i=1}^m y_i^{\partial \beta \, \cap \, e_i } |_{_{\mathrm{ker}(q_*)}}.
\end{equation}

Let $\beta \in \pi_2(X^{ss}, L)$. Since every unstable effective homotopy class is fully excluded, the class $\iota_* \beta$ is semistable.  By Lemma~\ref{lemma_boundarysame}, the boundary classes agree whenever two relative classes become identified in $X$. Furthermore, Corollary~\ref{cor_nbetanalpha} shows that the corresponding disk counting invariants also agree. Consequently, neither the monomials nor the coefficients change after passing from $X^{ss}$ to $X$. Combining~\eqref{equ_openGWcorres},~\eqref{equ_omegamu}, and~\eqref{equ_monomials}, we obtain
$$
W_{L / G} (\mathbf{z}) = \sum_{\beta} n_{\beta} \cdot \prod_{i=1}^m y_i^{\partial \beta \, \cap \, e_i} |_{\mathrm{ker}(q_*)}  \cdot T^{\omega(\beta) / 2 \pi} = W_L^{ss} |_{\mathrm{ker}(q_*)} (\mathbf{z}),
$$
where the summation is taken over all semistable disk classes $\beta \in \pi_2(X,L)$ of Maslov index two.
\end{proof}

\begin{remark}
If $X$ is not symplectically Fano, then Theorem~\ref{theorem_diskGIT} only determines the leading-order contribution to the disk potential. In general, a holomorphic disk with sphere bubbles may contributes to the disk potential.

Moreover, the semistable locus depends on the choice of moment-map level set, and hence so does the corresponding symplectic reduction. As a consequence, the semistable disk potential and the disk potential of the quotient may vary with the choice of reduction.
\end{remark}

\section{Examples: going-up and going-down}

In this section, we illustrate how Corollary~\ref{cor_countinginv} and Theorem~\ref{theorem_diskGIT} can be applied in concrete examples to compute disk potentials.

\begin{example}
Consider the complex projective plane $X = \CP^2$ with homogeneous coordinates $[x_0 : x_1 : x_2 ]$, equipped with the $T^2$-action
\begin{equation}\label{equ_T2actoncp2}
\bigl(e^{i \theta_1}, e^{i \theta_2}\bigr) \, * [x_0 : x_1 : x_2 ] = \bigl[x_0 : e^{i \theta_1} x_1 : e^{i \theta_2} x_2 \bigr].
\end{equation}
We normalize the Fubini–Study K\"{a}hler form so that the moment polytope of the associated moment map $\mu_{T^2} \colon X \to \R^2$ is the convex hull $\Delta$ of the points $(-1,-1), (-1,2)$, and $(2,-1)$. Let $L(\mathbf{u})$ denote the Lagrangian toric fiber over $\mathbf{u} = (u_1, u_2) \in \mathrm{Int}(\Delta)$. By \cite{Cho04}, the set $\Pi^{(2)}(X,L(\mathbf{u}))$ consists of three homotopy classes $\{\beta_0, \beta_1, \beta_2\}$ where $\beta_i$ intersects the toric divisor $V(x_i)$ exactly once. The disk potential of $L(\mathbf{u})$ is 
$$
W_{L(\mathbf{u})} (\mathbf{y}) = y_1 T^{1+u_1} + y_2 T^{ 1+ u_2}  + \frac{1}{y_1y_2} T^{1-u_1-u_2},
$$
where $y_i$ is the exponential variable corresponding to an $S^1_{\theta_i}$-orbit for $i = 1,2$.

Next, consider the $S^1$-subaction defined by
$$
e^{i \theta_1} * [x_0 : x_1 : x_2 ] = [x_0 : e^{i \theta_1} x_1 : x_2 ].
$$ 
Its moment map is the first component of $\mu_{T^2}$, that is,
$$
\mu_{S^1} \coloneqq \mathrm{pr}_1 \circ \mu_{T^2}, \quad \mu_{S^1}([x_0 : x_1 : x_2]) = \frac{3|x_1|^2}{|x_0|^2+|x_1|^2+|x_2|^2} - 1.
$$ 
We perform symplectic reduction at the level set $\mu_{S^1}^{-1}(0)$. The induced $S^1$-action on $\mu_{S^1}^{-1}(0)$ is free. The semistable locus is the complement of the divisor $V(x_1)$ and the fixed point $[0 : 1 : 0]$. 

Let $\mathbf{u}\in \mathrm{Int}(\Delta)$ satisfy $u_1=0$. Then the semistable disk potential is 
$$
W_{L(\mathbf{u})}^{ss}(\mathbf{y}) = y_2 \, T^{1+ u_2}  + \frac{1}{y_1y_2} \, T^{1-0-u_2}.
$$
Since the reduction removes the direction corresponding to $y_1$, we set $z \coloneqq y_2$ and $y_1 \coloneqq 1$. By Theorem~\ref{theorem_diskGIT},  the disk potential of the quotient Lagrangian circle $L(\mathbf{u})/S^1$ is
\begin{equation}\label{equ_WLuS1}
W_{L(\mathbf{u})/S^1}(\mathbf{z}) = z \, T^{1+ u_2} + \frac{1}{z} \, T^{1-u_2}.
\end{equation}

Observe that the $S^1$-action on $\mu_{S^1}^{-1}(0) \simeq S^3$ is the standard diagonal Hopf action. Therefore, $X \, \git \, S^1 \simeq \CP^1$ and $L(\mathbf{u}) \, / \, S^1 \simeq S^1$. The disk potential in~\eqref{equ_WLuS1} coincides with the disk potential of a toric circle fiber in $\CP^1$.
\end{example}

We next discuss a non-toric example.

\begin{example}
Let $D \coloneqq \mathrm{diag}(5,5,0,0,0)$ be a $(5 \times 5)$ diagonal matrix. The (co)adjoint orbit $X$ of $D$ under the conjugate action of $\mathrm{U}(5)$ is isomorphic to $\mathrm{Gr}(2, \C^5)$. It carries a completely integrable system, called the \emph{Gelfand--Zeitlin system},  
$$
\Phi = (\Phi_{i,j} \mid 1 \leq i \leq 3, 1 \leq j \leq 2) \colon X \to \R^{3 \times 2}
$$
where $\Phi_{i,j}(A)$ is defined as the $j$-th largest eigenvalue of the $(i+j-1)\times(i+j-1)$ principal submatrix of $A \in X$, see \cite{Thi81, GS83}. The image of $\Phi$ is the Gelfand--Zeitlin polytope $\Delta$ determined by the interlacing inequalities arising from the min–max principle. Explicitly, $\Delta$ is given as follows.
\begin{center}
\begin{tikzpicture}[scale=1, every node/.style={font=\small}]

\node (l1) at (2,1.95) {$u_{3,2}$};
\node (l1) at (4,1.95) {$u_{2,2}$};
\node (l1) at (6,1.95) {$u_{1,2}$};
\node (l1) at (8,2) {$0$};

\node (l1) at (0,1) {$5$};
\node (l1) at (2,0.95) {$u_{3,1}$};
\node (l1) at (4,0.95) {$u_{2,1}$};
\node (l1) at (6,0.95) {$u_{1,1}$};

\node at (1,1) {$\geq$};
\node at (3,1) {$\geq$};
\node at (5,1) {$\geq$};

\node at (3,2) {$\geq$};
\node at (5,2) {$\geq$};
\node at (7,2) {$\geq$};

\node[rotate=90]  at (2,1.5) {$\geq$};
\node[rotate=90] at (4,1.5) {$\geq$};
\node[rotate=90] at (6,1.5) {$\geq$};

\end{tikzpicture}
\end{center}

Let $\mathbf{u} \in \mathrm{Int}(\Delta)$ be a point and denote by $L(\textbf{u})$ the fiber of $\Phi$ over $\mathbf{u}$. Then $L(\mathbf{u})$ is a Lagrangian torus. By \cite{NNU08}, the disk potential of $L(\mathbf{u})$ is
\begin{align*}
W_{L(\textbf{u})}(\mathbf{y}) &= \frac{1}{y_{3,1}}T^{5-u_{3,1}} + \frac{y_{3,1}}{y_{2,1}}T^{u_{3,1}-u_{2,1}}  + \frac{y_{2,1}}{y_{1,1}} T^{u_{2,1}-u_{1,1}} + \frac{y_{3,2}}{y_{2,2}}  T^{u_{3,2}-u_{2,2}}\\
&+ \frac{y_{2,2}}{y_{1,2}} T^{u_{2,2}-u_{1,2}} + y_{1,2} T^{u_{1,2}} + \frac{y_{3,1}}{y_{3,2}} T^{u_{3,1}-u_{3,2}} + \frac{y_{2,1}}{y_{2,2}} T^{u_{2,1}-u_{2,2}} + \frac{y_{1,1}}{y_{1,2}} T^{u_{1,1}-u_{1,2}}.
\end{align*}

Let $T^\prime$ be the diagonal maximal torus of $\mathrm{U}(5)$. Since the diagonal $S^1$ acts trivially on $X$, we define 
$$
T \coloneqq T^\prime / \mathrm{diag}(S^1) \simeq T^4
$$
to obtain an effective action. We identify $T$ with the subgroup $\{ \mathrm{diag}(e^{i \theta_1}, \cdots, e^{i \theta_4}, 1) \}$. The moment map $\mu_T$ for the $T$-action is given by 
\begin{equation}\label{equ_momentmapT}
\mu_T(A) = 
(u_{1,1}, u_{2,1} + u_{1,2} - u_{1,1}, u_{3,1} + u_{2,2} - u_{2,1} - u_{1,2}, 5+u_{3,2} - u_{3,1} - u_{2,2})
\end{equation}
for $A \in L(\mathbf{u})$. Let $\nu_0 \coloneqq (2,2,2,2)$ and consider the level set $\mu_T^{-1}(\nu_0)$. This level set consists of Gelfand--Zeitlin fibers. Moreover, the stabilizers of any two points lying in the relative interior of the same face of $\Delta$ are isomorphic. Therefore, to determine the stabilizers on $\mu_T^{-1}(\nu_0)$, it suffices to examine the stabilizers of only finitely many fibers.

The intersection of the affine hyperplanes determined by $\nu_0$ with $\Delta$ meets the relative interiors of eight faces of $\Delta$. Among them, there are two $3$-dimensional faces $f_1$ and $f_2$. Using the description of Gelfand--Zeitlin fibers in \cite{CKO20}, one sees that the fiber over each point in the relative interior of $f_i$ contains an $S^3$-factor in addition to a torus factor $T^3$, even though $\dim f_i = 3$. This spherical factor eliminates the residual isotropy and ensures that the $T$-action is free on $\mu_T^{-1}(\nu_0)$.

Moreover, the level set $\mu_T^{-1}(\nu_0)$ intersects every facet of $\Delta$ and the inverse image of the relative interior of each facet is contained in the semistable locus $X^{ss}$. Consequently, every holomorphic disk of Maslov index two lies in $X^{ss}$. Hence every class in $\Pi^{(2)}(X,L(\mathbf{u}))$ is semistable and 
$$
W^{ss}_{L(\mathbf{u})}(\mathbf{y}) = W_{L(\mathbf{u})}(\mathbf{y}).
$$

Let $\mathbf{u} \in \mathrm{Int}(\Delta)$ satisfy $\mu_T(\mathbf{u}) = \nu_0$. Set $z_1 = y_{1,2}, z_2 = y_{2,2}, v_1 = u_{1,2}$, and $v_2 =u_{2,2}$. Using the relations from~\eqref{equ_momentmapT}, we have
$$
y_{1,1} = y_{2,1} y_{1,2} = y_{3,1} y_{2,2} = y_{3,2} = 1.
$$
Therefore, Theorem~\ref{theorem_diskGIT} yields the disk potential of the quotient Lagrangian $L(\mathbf{u})/T$
\begin{align*}
W_{L(\mathbf{u}) / T}(\mathbf{z}) &= {z_2} T^{v_2-1} + \frac{z_1}{z_2} T^{2+v_1-v_2}+ \frac{2}{z_1}T^{2-v_1} \\ &+ \frac{2}{z_2}T^{3-v_2} + \frac{z_2}{z_1} T^{v_2 - v_1} + z_1 T^{v_1} + \frac{1}{z_1z_2}T^{4-v_1-v_2}. 
\end{align*} 
\end{example}

\begin{remark}
This generalizes a result in \cite{KLZ24}, where the disk potential was computed only for the \emph{monotone} Gelfand--Zeitlin fiber. In contrast, the above computation determines the disk potential for an arbitrary torus fiber. In forthcoming joint work with J.~Choi, we systematically study torus actions on Gelfand--Zeitlin systems and extend these constructions to other Grassmannians of $2$-planes.
\end{remark}

\begin{remark}
A level $\lambda \in \mathfrak{t}^*$ is said to \emph{interact with all effective disk classes} if the complex orbit of the level set contains all holomorphic disks of Maslov index two. Equivalently, every $\beta \in \Pi^{(2)}(X,L)$ is semistable, so that $W = W^{ss}$.

In general, in the compact toric setting, it is hard to simultaneously satisfy both the freeness of the $G$-action on the level set and the condition that the level intersects all classes in $\Pi^{(2)}(X,L)$. For Grassmannians, however, vanishing cycles may appear along lower-dimensional strata, making it possible for both conditions to hold simultaneously. The above example illustrates such a phenomenon.
\end{remark}

We apply Corollary~\ref{cor_countinginv} to recover the disk potential of the total space from that of the quotient. The following elementary example illustrates this “going-up” procedure.

\begin{example} 
Let $n \in \mathbb{N}_{\geq 0}$ and consider the line bundle $\mathcal{O}(-n)$ over the projective line $\CP^1$. Its total space is a noncompact toric manifold whose moment polytope is
$$
\left\{ \textbf{u} = (u_1, u_2) \in \R^2 \mid u_1 \geq 0, u_2 \geq 0, nu_2 - u_1 + 1 \geq 0 \right\}.
$$ 
Let $\mu_{T^2}$ denote the moment map. The second component $\mu_{S^1} \coloneqq \mathrm{pr}_2 \circ \mu_{T^2}$ generates the fiberwise $S^1$-action on $\mathcal{O}(-n)$. Moreover, the complement of the zero section $o_{\CP^1}$ is a holomorphic principal $\mathbb{C}^*$-bundle over $\CP^1$
$$
q \colon P = \mcal{O}(-n) \backslash {o}_{\CP^1} \to \CP^1.
$$ 

Fix $\nu \in \mathbb{N}$. The $S^1$-action is free on the level set $\mu_{S^1}^{-1}(\nu)$, which is diffeomorphic to a lens space. The corresponding symplectic reduction is isomorphic to $\CP^1$ equipped with a multiple of the Fubini–Study form. Let $\mathbf{u}=(u_1,u_2)$ with $u_2=\nu$. We denote by $L(\mathbf{u})$ the toric Lagrangian fiber over $\mathbf{u}$. Then the disk potential of the quotient Lagrangian $L(\mathbf{u})/S^1$ is
$$
W_{L(\mathbf{u})/S^1}(\mathbf{z}) = z T^{u} + \frac{1}{z} T^{1 + n \nu-u}.
$$

We explain how to compute the disk potential of $W_{L(\mathbf{u})}$ from $W_{L(\mathbf{u})/S^1}$. To apply Corollary~\ref{cor_countinginv}, we first verify that 
$$
\mathcal{M}_1(P,L(\mathbf{u}),J,\beta) = \overline{\mathcal{M}}_1(P,L(\mathbf{u}),J,\beta).
$$ 
Since $L(\mathbf{u})/S^1$ is positive, $L(\mathbf{u})$ is also positive by Lemma~\ref{lemma_positivity}. Let $\beta \in \Pi^{(2)}(P, L(\mathbf{u}))$. Since $q_*\beta \in \Pi^{(2)}(\CP^1, L(\mathbf{u})/S^1)$ is regular, Lemma~\ref{lemma_qregular} shows that $\beta \in \Pi^{(2)}(P, L(\mathbf{u}))$ is also regular. Hence, the moduli space $\mathcal{M}_1(P,L(\mathbf{u}),J,\beta)$ is a smooth manifold without boundary. 

A priori, however, $\mathcal{M}_1(P,L(\mathbf{u}),J,\beta)$ need not be compact since a sequence of holomorphic disks may escape to infinity. Nevertheless, by Corollary~\ref{corollary_ontophi} and Lemma~\ref{lemma_onetoonephi}, 
$$
\phi \colon \mathcal{M}_1(P,L(\mathbf{u}),J,\beta) /S^1 \to \mathcal{M}_1(\CP^1, L(\mathbf{u})/S^1, q_* J, q_* \beta)
$$ 
is a smooth bijection between smooth manifolds with the same dimension. By the invariance of domain, $\phi$ is a homeomorphism. Since the moduli space on the right-hand side is compact, it follows that $\mathcal{M}_1(P,L(\mathbf{u}),J,\beta) $ is also compact. By Corollary~\ref{cor_countinginv}, the corresponding counting invariants agree.

The remaining task is to understand how the Laurent monomials change under the lifting procedure. Let $\beta$ be the relative homotopy class of a holomorphic disk corresponding to the first term. Then the second term corresponds to the class $[S^2] - \beta$. Via the isomorphism $q_*$ in Lemma~\ref{lemma_isoonhomotopy}, these classes lift to relative classes $\beta_1, \beta_2 \in \pi_2(P, L(\mathbf{u}))$. However, a spherical class in the reduced space does not necessarily lift to a spherical class in $P$. In particular,
$$
\partial \beta_1 + \partial \beta_2 \neq 0.
$$ 

To measure this discrepancy, consider the associated bundle defined by the standard representation, which again yields $\mathcal{O}(-n)$. Let $\beta_0 \in \pi_2(\mcal{O}(-n), L(\mathbf{u}))$ be the class represented by a fiberwise holomorphic disk intersecting the zero section $o_{\CP^1}$ once. By adding a suitable multiple of $\beta_0$ so that the boundary vanishes, we obtain a spherical class
$$
\alpha \coloneqq \beta_1 + \beta_2 + a \beta_0 \in \pi_2(\mathcal{O}(-n)).
$$
Since $\alpha$ is homotopic to the zero section $o_{\CP^1}$ and $\langle c_1(\mathcal{O}(-n)), \alpha \rangle = -n$, it follows that $a = - n$. Consequently,
$$
\partial \beta_2 = - \partial \beta_1 + n \partial \beta_0
$$
and hence the disk potential of $L(\mathbf{u})$ in the total space $P$ is
$$
W_{L(\mathbf{u})}(\mathbf{y}) = y_1 T^u + \frac{y_2^n}{y_1 } T^{1 + n \nu-u}.
$$
\end{example} 

\begin{remark}
As another direction of generalization, consider the base affine space $G/U$ where $G = \mathrm{SL}_2(\C)$ and $U$ is the unipotent radical of a Borel subgroup $B$. The natural projection $\pi \colon G/U \to G/B$ is isomorphic to $\mcal{O}(-1) \backslash o_{\CP^1} \to \CP^1$. In particular, the disk potential computed via our lifting procedure agrees with the Gross--Hacking--Keel--Kontsevich (GHKK) superpotential. which is expressed as a sum of $\theta$-functions. In forthcoming work \cite{Kim}, we will develop this lifting procedure systematically and recover the GHKK superpotential for the basic affine spaces of $\mathrm{SL}_n(\C)$ through holomorphic disk counting.
\end{remark}

\section{Disk potential of quadric hypersurfaces}

In \cite{Kim23}, the disk potential of a monotone Lagrangian torus fiber in the Gelfand–Zeitlin (GZ) system of a quadric hypersurface was computed. In this section, we extend \cite[Theorem 4.1]{Kim23} to arbitrary torus fibers, without assuming monotonicity, by applying the techniques developed in this paper together with a degeneration method.

We begin by fixing notation. Let $\mathbf{x} \coloneqq [x_0 : \dots : x_{n+1}]$ be the homogeneous coordinates on $\CP^{n+1}$ and consider the quadric hypersurface
$$
\mathcal{Q}^{n} \coloneqq \left\{ \mathbf{x} \in \CP^{n+1} \mid x_0^2 + x_1^2 + \dots + x^2_{n+1} = 0 \right\},
$$
equipped with the K\"{a}hler form induced by a suitable multiple of the Fubini–Study form on $\CP^{n+1}$.

The quadric $\mathcal{Q}^n$ carries a completely integrable system, called the \emph{Gelfand–Zeitlin system}, constructed in \cite{Thi81, GS83}. Setting $\| \mathbf{x} \|^2 = \sum_{j=0}^{n+1} |x_j|^2$, we define
\begin{itemize}
\item $\displaystyle \Phi_1(\mathbf{x}) \coloneqq \frac{n \sqrt{-1} (x_0 \overline{x}_1 - \overline{x}_0 x_1)}{\|\mathbf{x}\|^2}$,
\item $\displaystyle \Phi_k(\mathbf{x}) \coloneqq \sqrt{ \left( \frac{n \sum_{i=0}^{k}|x_i|^2}{\|\mathbf{x}\|^2}\right)^2 - \left| \frac{n \sum_{i=0}^{k} x_i^2}{\|\mathbf{x}\|^2}\right|^2} \quad \mbox{for $k=2, \dots, n$}.$ 
\end{itemize}
The GZ system is given by
$$
\Phi_\mathrm{GZ} = (\Phi_1, \dots, \Phi_n) \colon \mathcal{Q}^n \to \R^n.
$$
Its image is the polytope
$$
\Delta^n \coloneqq \left\{ \mathbf{u} \in \R^n \mid n \geq u_n \geq u_{n-1} \geq \dots \geq u_2 \geq |u_1| \right\},
$$
which is a an $n$-simplex. Setting $u_{n+1} \coloneqq n$, we define
$$
\begin{cases}
f_0 \coloneqq \{ \mathbf{u} \in \Delta^n \mid u_2 + u_1 = 0 \} \\
f_j \coloneqq \{ \mathbf{u} \in \Delta^n \mid u_{j+1} - u_j = 0 \} \quad \mbox{for $j = 1, \dots, n$.}
\end{cases}
$$
Then $f_0, f_1, \dots, f_n$ are precisely the facets of $\Delta^n$.

We now recall the toric degeneration of the GZ system constructed by Nishinou--Nohara--Ueda \cite{NNU08, NNU10}. Consider the following degeneration in stages$\colon$
$$
\mathcal{X} \coloneqq \{ (\mathbf{x}, \textbf{t}) \in \CP^{n+1} \times \C^{n-1} \mid x_0^2 + x_1^2 + x_2^2 + t_3 x_3^2 + \dots + t_{n+1} x^2_{n+1} = 0\},
$$
where $\textbf{t} = (t_3, \dots, t_{n+1})$. Let $\pi \colon \mathcal{X} \to \C^{n-1}$ be the projection and we denote by $\mathcal{X}_\mathbf{t} \coloneqq \pi^{-1}(\mathbf{t})$ the fiber over $\mathbf{t}$. 

For each $k = 3, 4, \dots, n+1$, define 
$$
\mathcal{X}_{(k,t_k)} \coloneqq \pi^{-1}(1, \dots, 1, t_k, 0, \dots, 0) = \left\{ \mathbf{x}  \in \CP^{n+1} \mid x_0^2 + x_1^2 + \dots + x_{k-1}^2 + t_{k} x^2_{k} = 0 \right\}.
$$
We also introduce functions $\Psi_1, \dots, \Psi_n$ by
\begin{itemize}
\item $\displaystyle \Psi_1 (\mathbf{x}) \coloneqq \Phi_1 (\mathbf{x}) = \frac{n \sqrt{-1} (x_0 \overline{x}_1 - \overline{x}_0 x_1)}{\|\mathbf{x}\|^2}$,
\item $\displaystyle \Psi_{k} (\mathbf{x}) \coloneqq \frac{n \sum_{i=0}^{k}|x_i|^2}{\|\mathbf{x}\|^2} \quad \mbox{for $k=2, \dots, n$}.$
\end{itemize}
Each $\mathcal{X}_{(k,t_k)}$ carries a completely integrable system 
\begin{equation}\label{equ_phiktk}
\Phi_{(k, t_k)} \coloneqq (\Phi_1, \dots, \Phi_{k-1}, \Psi_{k}, \dots, \Psi_{n} ) \colon \mathcal{X}_{(k, t_k)} \to \R^n. 
\end{equation}
By construction, $\Phi_{(n+1, 1)} = \Phi_\mathrm{GZ}$. On the central fiber $\mathcal{X}_{(3,0)}$, we have $\Phi_2 = \Psi_2$ and hence
$$
\Phi_{(3,0)} = (\Psi_1, \dots, \Psi_n) \eqqcolon \Psi_\mathrm{GZ}
$$ 
which coincides with the toric moment map on 
$$
\mathcal{T}^n \coloneqq  \left\{ \mathbf{x} \in \CP^{n+1} \mid x_0^2 + x_1^2 + x_2^2 = 0 \right\}.
$$
Both $\Phi_{\mathrm{GZ}}$ and $\Psi_{\mathrm{GZ}}$ have image $\Delta^n$. 

To interpolate between these systems, consider a piecewise linear path $\gamma \colon [0,1] \to \C^{n-1}$ defined by
\begin{equation}\label{equ_defgammapie}
(1, \dots, 1) \to (1, \dots, 1, \delta) \to (\delta^{n-1}, \delta^{n-2}, \dots, \delta) \to (\delta^{n-1}, \dots, \delta^{n-1}) \to (0, \dots, 0)
\end{equation}
where $\delta>0$ is sufficiently small. This path approximates the degeneration in stages described in~\eqref{equ_phiktk}. Using the gradient Hamiltonian flow, we transport the GZ system along $\gamma$ to obtain a completely integrable system over the point $(\delta^{n-1}, \dots, \delta^{n-1})$ . Taking the limit as $\delta \to 0$, we obtain a family of completely integrable systems along the line segment $(\delta^{n-1}, \dots, \delta^{n-1}) \to (0, \dots, 0)$, which converges to the toric moment map $\Psi_{\mathrm{GZ}}$.

We write ${X}_t \coloneqq X_{\gamma(t)}$. Then $X_t \simeq \mathcal{Q}^n$ for $t \neq 0$ and $X_0 = \mathcal{T}^n$. We denote by $\Phi_t \colon {X}_t \to \Delta^n$ a completely integrable system on ${X}_t$. Let $\mathring{\mathcal{T}}^n$ be the smooth locus of $\mathcal{T}^n$. 

\begin{theorem}[\cite{NNU08}]\label{theorem_NNU08}
For each $t \in [0,1]$, there exists a continuous map $\psi_t \colon {X}_t \to {X}_0$ such that $\psi_t$ restricts to a symplectomorphism on $\Phi^{-1}_t(\Psi_\mathrm{GZ}(\mathring{\mathcal{T}^n}))$ and the following diagram commutes. 
\begin{equation}\label{equ_toricdegenerations}
\xymatrix{
{X}_t \ar^{\psi_t}[rr] \ar[rd]_{\Phi_t} & & X_0 = \mathcal{T}^n \ar[ld]^{\Phi_0 = \Psi_\mathrm{GZ}} \\
 & \Delta^n &}
\end{equation} 
\end{theorem}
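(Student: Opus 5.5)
The plan is to build $\psi_t$ as a composition of gradient Hamiltonian flows along the piecewise linear path $\gamma$ in~\eqref{equ_defgammapie}, following the Harada--Kaveh / Nishinou--Nohara--Ueda construction. On the smooth locus of the total family $\mathcal{X}$ over a real segment of $\gamma$, I will consider the gradient vector field $\nabla(\mathrm{Re}\,\pi_\gamma)$ with respect to the Kähler metric on $\CP^{n+1}\times\C^{n-1}$, where $\pi_\gamma$ is the coordinate along the segment. I will then rescale it to $V = -\nabla \mathrm{Re}\,\pi_\gamma/\|\nabla \mathrm{Re}\,\pi_\gamma\|^2$, so that its time-$s$ flow carries $X_{t}$ to $X_{t-s}$ wherever it is defined.

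The standard Ruan computation shows that the flow of $V$ is symplectic between smooth fibers. The reason is that $V$ is the Hamiltonian vector field of $\mathrm{Im}\,\pi_\gamma$ up to the normalization, and $\mathrm{Im}\,\pi_\gamma$ is constant on real segments. So on each stage where both endpoints are smooth quadrics (the first three segments), the flow yields a genuine symplectomorphism $X_{t}\to X_{t'}$. Transporting the functions of the GZ system along it defines $\Phi_t$ as the pullback, and the triangle~\eqref{equ_toricdegenerations} commutes by definition on these stages.

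The key structural input is that on the stage $\mathcal{X}_{(k,t_k)}$ the functions $\Phi_1,\dots,\Phi_{k-1},\Psi_k,\dots,\Psi_n$ are invariant under the relevant flow. These functions generate a torus action, together with the $\mathrm{SO}(k)$-type symmetry of $x_0^2+\dots+x_{k-1}^2$, and the action preserves both the family and the metric. Hence $V$ commutes with their Hamiltonian flows. Since $\mathrm{Re}\,\pi_\gamma$ is invariant under that torus, each $\Psi_j$ with $j\ge k$ Poisson-commutes with it and is preserved by the gradient flow.

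The last segment $(\delta^{n-1},\dots,\delta^{n-1})\to 0$ is the main obstacle, because the central fiber $\mathcal{T}^n$ is singular and $V$ blows up near the singular locus. I will follow Harada--Kaveh's argument. On $X_{\gamma(1/4)}$ minus a neighborhood of the vanishing locus, the flow exists for all positive time and extends continuously to the limit $s\to$ end. This uses that $\mathrm{Re}\,\pi_\gamma$ restricted to $\mathcal{X}$ near the singularity has properness and Łojasiewicz-type gradient inequalities, which ensure that flow lines have finite length. The limit map is continuous and surjective onto $X_0$, and it is a symplectomorphism on the preimage of the smooth locus $\mathring{\mathcal{T}}^n$.

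Finally I will take $\delta\to 0$, using continuity of the whole construction in $\delta$ as NNU do. I will compose all the stage maps to get $\psi_t$ and check that $\Psi_\mathrm{GZ}\circ\psi_t=\Phi_t$. Identifying the image as $\Delta^n$ then follows from the toric structure of $\mathcal{T}^n$.
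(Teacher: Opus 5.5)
Your plan follows the route the paper takes by citing Nishinou--Nohara--Ueda. You transport the GZ system by Ruan's gradient-Hamiltonian flow along $\gamma$, use the $\mathrm{SO}(j+1)$ and torus symmetries of the stage families $\mathcal{X}_{(k,t_k)}$ to see that $\Phi_1,\dots,\Phi_{k-1},\Psi_k,\dots,\Psi_n$ are preserved, extend the flow continuously onto the singular fiber $\mathcal{T}^n$, and then let $\delta\to 0$. One caution: those symmetries exist only on the exact stage families (and on the first segment of $\gamma$), not on the later segments where all $t_j\neq 0$, so $\Psi_{\mathrm{GZ}}\circ\psi_t=\Phi_t$ does not hold ``by definition''; it is what the $\delta\to 0$ convergence to the stagewise flows has to deliver, and that step, rather than the singular central fiber, is the substantive one that both your sketch and the paper leave to Nishinou--Nohara--Ueda.
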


For $\mathbf{u}\in\mathrm{Int}(\Delta^n)$, the fiber $\Phi_t^{-1}(\mathbf{u})$ is diffeomorphic to the toric fiber $\Psi_\mathrm{GZ}^{-1}(\mathbf{u})$ via $\psi_t$. By abuse of notation, we denote this fiber by  $L(\mathbf{u}) \coloneqq \Phi_t^{-1}(\mathbf{u})$. Let $\theta_j = \theta_j(\mathbf{u})$ denote the oriented $S^1$-cycle generated by the action variable $u_j$. These cycles form a basis 
\begin{equation}\label{equ_basisH1}
H_1(L(\mathbf{u}); \Z) = \mathbb{Z} \langle \theta_1, \dots, \theta_n \rangle \simeq \Z^n.
\end{equation}
Let $\{ e_1, \dots, e_n \}$ be the dual basis of $\{ \theta_1, \dots, \theta_n\}$ and let $y_j$ be the exponential variable corresponding to $e_j$. 

Let $\Delta^{\vee}$ be the Newton polytope dual to $\Delta^n$, the convex hull of the primitive inward normal vectors to the facets of $\Delta^n$. The polytope $\Delta^{\vee}$ consists of $(n+1)$ vertices given by
\begin{equation}\label{equ_verticesdeln}
(-1,1,0, \dots 0), (1,1,0, \dots 0), (0,-1,1,0, \dots 0), \dots,  (0, \dots,0, -1, 1), (0, \dots,0, -1) \in \Z^n
\end{equation}
and contains $(n+3)$ lattice points. By the classification result of Cho--Poddar \cite{ChoPoddar}, a holomorphic disk of Maslov index two bounded by a toric fiber in $\mathcal{T}^n$ correspond to a nonzero lattice point of $\Delta^\vee$. In particular, there are no nonconstant holomorphic disks whose boundary class is trivial in $H_1(L(\mathbf{u}); \Z)$. 

We now determine the coefficient of the monomial corresponding to each vertex in~\eqref{equ_verticesdeln}. Using the toric degeneration~\eqref{equ_toricdegenerations}, one can determine most terms of the disk potential of $L(\mathbf{u})$ in $X_t$ for sufficiently small $t$. Each vertex of $\Delta^\vee$ corresponds to a holomorphic disk intersecting a toric divisor and the corresponding open Gromov–Witten invariant is equal to one, see \cite{ChoOh, ChoPoddar}. We refer to these as \emph{basic disk classes}. Moreover, every basic disk class is regular with respect to the standard complex structure. By openness of regularity, they remain regular for sufficiently small $t$ with respect to the complex structure on $X_t$ and the corresponding counting invariants are preserved.

It remains to determine whether there exists an additional contribution from holomorphic disks corresponding to the lattice point $(0,1,0,\dots,0)$. If such disks exist, then the disk potential $W$ contains a term of the form $y_2$, whose coefficient must be determined. Consequently, in this case, the disk potential of $L(\mathbf{u})$ in $X_t$ is of the following form
\begin{equation}\label{equ_diskpotentialfunguess}
W_{L(\mathbf{u})}(\mathbf{y}) = \frac{1}{y_n} T^{n-u_n} + \frac{y_n}{y_{n-1}}  T^{u_n-u_{n-1}}  + \cdots + \frac{y_2}{y_{1}}  T^{u_1 - u_2} + m y_2 T^{u_2} +  y_1 y_2 T^{u_1 + u_2}
\end{equation}
for some coefficient $m$.

For this purpose, we establish the following lemma.

\begin{lemma}\label{lemma_nonintersection}
Assume that $n \geq 3$. Let $\beta$ be a relative homotopy class realized by a Maslov index two holomorphic disk whose boundary class corresponds to the lattice point $(0,1,0, \dots ,0)$. Then, for all sufficiently small $t$, every holomorphic disk $\varphi \in \mathcal{M}(X_t, L(\mathbf{u}), J, \beta)$ satisfies
$$
\varphi(\mathbb{D}) \cap 
\left( \bigl(V(x_0) \cap \dots \cap V(x_3) \bigr) \cup V(x_4 x_5 \dots x_{n+1}) \right) = \emptyset.
$$
\end{lemma}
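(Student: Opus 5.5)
The plan is to handle the two pieces of the forbidden set separately. For the divisor $D \coloneqq V(x_4 x_5 \cdots x_{n+1}) \cap X_t$ I will use a topological intersection number together with positivity of intersections. For the codimension-two locus $Z \coloneqq V(x_0)\cap\cdots\cap V(x_3)\cap X_t$ I will use a degeneration/compactness argument as $t \to 0$.

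\emph{Step 1 (the divisor).} First I check that $D$ is disjoint from $L(\mathbf{u})$ for $\mathbf{u} \in \mathrm{Int}(\Delta^n)$. On $L(\mathbf{u})$ the strict inequalities $u_{k} > u_{k-1}$ for $k \geq 3$, together with $u_{n+1}=n$, force $\sum_{i\le k}|x_i|^2 < \sum_{i \le k+1}|x_i|^2$ (in the $\Psi$-coordinates, transported by $\psi_t$). Hence $x_{k+1} \neq 0$ for $k+1 \ge 4$. Consequently each component $V(x_k)\cap X_t$ with $k\ge 4$ is a complex hypersurface missing $L(\mathbf{u})$, and the intersection number $\beta\cdot V(x_k)$ is a well-defined topological invariant of $\beta \in \pi_2(X_t, L(\mathbf{u}))$.

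Next I compute $\beta \cdot V(x_k)$ on the central fiber $X_0=\mathcal{T}^n$, using the identification $\psi_t$ of Theorem~\ref{theorem_NNU08}, which is a homeomorphism carrying $V(x_k)$ to $V(x_k)$ and $L(\mathbf{u})$ to the toric fiber. There, $V(x_k)$ for $k \ge 4$ is the toric divisor over the facet $f_{k-1}=\{u_k=u_{k-1}\}$. For a class with boundary $\theta_2$ and Maslov index two, the intersection with a toric divisor is read off from the boundary class: it equals the pairing of $\partial\beta$ with the corresponding inward normal, corrected by the $\partial\beta$-independent contribution of $\pi_2$. I expect this to give $\beta\cdot V(x_k)=0$ for all $k\geq 4$, since $(0,1,0,\dots,0)$ involves only the first two coordinates and the normals $(0,\dots,-1,1,\dots)$ with the $-1$ in slot $\geq 3$ pair to zero with it. As a cross-check, the symplectic area of $\beta$ is $u_2$, which matches a disk supported in the coordinates $x_0,\dots,x_3$.

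Positivity of intersections between the $J$-holomorphic disk $\varphi$ and the complex hypersurface $V(x_k)$, whose boundary avoids $V(x_k)$, then forces $\varphi(\mathbb{D})\cap V(x_k)=\emptyset$ for each $k\ge4$.

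\emph{Step 2 (the codimension-two locus).} Here intersection numbers are unavailable, so I argue by contradiction. Suppose there are $t_j\to0$ and disks $\varphi_j \in \mathcal{M}(X_{t_j},L(\mathbf{u}),J,\beta)$ meeting $Z$. By Step~1 these disks lie in the open set $\{x_4\cdots x_{n+1}\neq 0\}$. Areas are fixed (equal to $u_2$), and $L(\mathbf{u})$ is positive, so Gromov compactness along the degenerating family yields a limiting stable disk in $\mathcal{T}^n$ with boundary on the toric fiber. Because the total Maslov index is two and no nonconstant disks have trivial boundary, the limit is a single disk or a disk with sphere bubbles. The limit must also pass through $Z_0=V(x_0)\cap\cdots\cap V(x_3)\cap\mathcal{T}^n$, since $Z$ is closed.

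By the Cho--Poddar classification, the Maslov two disk with boundary $(0,1,0,\dots,0)$ in $\mathcal{T}^n$ passes through the singular locus $\{x_0=x_1=x_2=0\}$ of the cone, but with $x_3\neq0$. Its area equals $u_2$, which is strictly less than what is needed to reach the stratum where $x_3$ also vanishes (that stratum lies over $u_3 = u_2 = \ldots$, i.e.\ the area would have to absorb $u_3-u_2>0$ as well). Hence the limit misses $Z_0$, which is the desired contradiction.

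The main obstacle is making Step~2 rigorous. Gromov compactness must be justified across a degenerating family whose central fiber is singular exactly along the locus that matters, and $\psi_t$ is only a symplectomorphism away from $\Psi_{\mathrm{GZ}}^{-1}(\partial \mathring{\mathcal T})$. My first attempt will be to work in the ambient $\CP^{n+1}$, where compactness holds for disks with boundary on the family of tori. I will then use the explicit area/height bound: the function $\sum_{i\le3}|x_i|^2/\|\mathbf{x}\|^2$ is controlled along the disk by its boundary values and the area $u_2$, via a maximum-principle argument for the (pluri)subharmonic function $\log\sum_{i\ge4}|x_i|^2 - \log\|\mathbf{x}\|^2$. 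This would give a uniform lower bound on $\sum_{i\le3}|x_i|^2$ along such disks for small $t$. The hypothesis $n\ge3$ is used precisely so that $x_3$ and the variables $x_4,\dots,x_{n+1}$ exist as a separate block.
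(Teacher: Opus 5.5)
Your proposal has a genuine gap: the value of $\beta\cdot V(x_k)$ is expected rather than established, and the rule you give for it is wrong. The value you expect ($0$ for $k\ge 4$) is in fact correct. But in the toric picture $\partial\beta=\sum_j(\beta\cdot D_j)\,v_j$, so intersection numbers are the \emph{coefficients} of $\partial\beta$ with respect to the normals, not pairings with them. Taken literally, your rule gives $\beta\cdot V(x_3)=-1$ (normal $e_3-e_2$), which positivity forbids.

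Moreover, the boundary class determines $\beta$ only up to multiples of the sphere class $\beta_0+\beta_1+2(\beta_2+\dots+\beta_n)$, which meets each $V(x_k)$, $k\ge3$, twice. It is the condition $\mu(\beta)=2$ that removes this ambiguity. The missing step is the first half of the paper's proof. For $n\ge3$ the basic classes form a basis of $H_2(X_t,L(\mathbf{u});\Q)$ compatibly with $\psi_{t,*}$ and the Maslov index. The boundary and Maslov conditions then force $\beta=\tfrac12(\beta_0+\beta_1)$, whence $\beta\cdot V(x_k)=0$.

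Also, $\psi_t$ is not a homeomorphism, since $X_t$ is smooth while $\mathcal{T}^n$ is singular along $\{x_0=x_1=x_2=0\}$. So the transfer of intersection numbers from $X_0$ to $X_t$ must be argued directly. One way is via the section $x_k$ of $\mathcal{O}(1)$ over the family, which is nonvanishing on the tori for small $t$.

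Step~2, as you acknowledge, is not a proof. The sentence about the area ``absorbing $u_3-u_2$'' is only a heuristic. Sphere bubbles in the Gromov limit are not excluded; they can be, since the total area $u_2<n$ is less than the area of a line. The proposed maximum-principle repair also fails. The function $\log(\sum_{i\ge4}|x_i|^2/\|\mathbf{x}\|^2)$ is not subharmonic on holomorphic curves, because its Laplacian contains the negative term $-\varphi^*\omega$. On the model disk $\lambda\mapsto[\lambda x_0:\lambda x_1:\lambda x_2:x_3:\dots:x_{n+1}]$ it has an interior maximum at $\lambda=0$.

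However, Step~2 is unnecessary. From $\beta=\tfrac12(\beta_0+\beta_1)$ you also get $\beta\cdot V(x_3)=0$, and $L(\mathbf{u})$ misses $V(x_3)$ because $u_3>u_2$. Positivity of intersections then gives $\varphi(\mathbb{D})\cap V(x_3)=\emptyset$, and $V(x_0)\cap\dots\cap V(x_3)\subseteq V(x_3)$.

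With this correction your intersection-theoretic route proves the lemma without any compactness argument. This is cleaner than the paper's argument, which after decomposing $\beta$ passes to the limit $t\to0$, identifies the limits with the Cho--Poddar disks in $\mathcal{T}^n$ that avoid the closed forbidden set, and concludes by openness.
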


\begin{proof}
Let $\beta_0, \beta_1, \dots, \beta_n$ be the basic disk classes corresponding to the vertices in~\eqref{equ_verticesdeln}, respectively. Recall that for $n \geq 3$, the induced homomorphism 
$$
\psi_{t,*} \colon H_2(X_t, L(\mathbf{u}) ; \Q) \to H_2(X_0, L(\mathbf{u}); \Q)
$$ 
is an isomorphism that preserves the Maslov index and the symplectic area, see \cite[Lemmas 5.5, 5.6]{Kim23}. By abuse of notation, we continue to denote by $\beta_j$ the classes $\psi_{t,*}^{-1}(\beta_j)$.

Since $\{ \beta_0, \dots, \beta_n \}$ forms a basis for $H_2(X_t, L(\mathbf{u}) ; \Q)$ for $n \geq 3$, we may write
$$
\beta = \sum_{j=0}^{n} c_j \beta_j \quad \mbox{ $c_j \in \mathbb{Q}$}.
$$
The conditions that $\partial \beta$ corresponds to the lattice point $(0,1,0, \dots, 0)$ via~\eqref{equ_basisH1} and the Maslov index is equal to two yield that
$$
c_0 = c_1,\,\, c_2 = \dots = c_n, \,\, c_0 + c_1 - c_2 = 1 , \mbox{ and } \sum_{j=0}^n c_j = 1.
$$ 
It follows that $c_0 = c_1 = 1/2$ and $c_3 = \dots = c_n = 0$ and hence $\beta = (\beta_0 + \beta_1)/2$.

Consider a sequence of holomorphic disks in $X_t$ representing $\beta$ as $t \to 0$. By Gromov compactness theorem, this sequence converges to a stable bordered holomorphic map in the compactification of $\mathcal{M}(X_0, L(\mathbf{u}), J, \beta)$. By the classification of Cho–Poddar \cite{ChoPoddar}, any such disk has minimal symplectic area among classes with boundary $(0,1,0,\dots,0)$ and hence sphere bubbling cannot occur as no excess energy is available. Moreover, when $t = 0$, every holomorphic disk whose boundary corresponds to $(0,1,0,\dots,0)$ avoids $\left( \bigl(V(x_0) \cap \dots \cap V(x_3) \bigr) \cup V(x_4 x_5 \dots x_{n+1})  \right)$. This non-intersection property persists for sufficiently small $t$. This completes the proof.
\end{proof}

Our strategy to compute the coefficient $m$ in~\eqref{equ_diskpotentialfunguess} is to reduce the problem to lower-dimensional quadrics via a GIT quotient while preserving the relevant counting invariants. For this purpose, we consider a ``partial" degeneration of $\mathcal{Q}^n$ whose GIT quotient by a suitable torus action is isomorphic to $\mathcal{Q}^2$. Using the correspondence theorem for GIT quotients, we show that the counting invariant agrees with that of the lower-dimensional quadrics. This reduction enables us to compute the invariants in higher dimensions.

\begin{theorem}[Theorem~\ref{thmx_C}]\label{thm_quadricshypdisk}
Let $n \geq 2$ and let $L^n(\mathbf{u})$ be the Lagrangian torus over $\mathbf{u} \in \mathrm{Int}(\Delta^n)$ in $\mathcal{Q}^n$. Then the disk potential is 
\begin{equation}\label{equ_WLn}
W_{L^n(\mathbf{u})}(\mathbf{y}) = 
\frac{1}{y_n} T^{n-u_n} + \frac{y_n}{y_{n-1}}  T^{u_n-u_{n-1}}  + \dots + \frac{y_2}{y_{1}}  T^{u_1 - u_2} + 2y_2  T^{u_2} +  y_1 y_2 T^{u_1 + u_2}.
\end{equation}
\end{theorem}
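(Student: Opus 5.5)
By the preceding discussion (Theorem~\ref{theorem_NNU08}, the Cho--Poddar classification and openness of regularity), for small $t$ the disk potential of $L^n(\mathbf{u})$ in $X_t$ has the form~\eqref{equ_diskpotentialfunguess}. The counting invariants are invariant along the path $\gamma$, because positivity and the symplectically Fano property exclude wall-crossing. So it remains only to show that the coefficient $m$ of $y_2T^{u_2}$ equals $2$. I would argue by induction on $n$, in two steps: a base case $n=2$, and a reduction from $\mathcal{Q}^n$ ($n\ge 3$) to $\mathcal{Q}^2$ using Theorem~\ref{theorem_diskGIT} or Corollary~\ref{cor_countinginv}.

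\emph{Base case.} Here $\mathcal{Q}^2\simeq \CP^1\times\CP^1$, with $\Phi_\mathrm{GZ}$ identified with a toric moment map after an integral affine change of coordinates. Under the degeneration $\mathcal{Q}^2\rightsquigarrow \mathcal{T}^2=\{x_0^2+x_1^2+x_2^2=0\}$, the disk potential of a product toric fiber is $a+1/a+b+1/b$, with suitable $T$-weights. In the coordinates $(y_1,y_2)$ adapted to $\Delta^2$, this becomes $\frac1{y_2}T^{2-u_2}+\frac{y_2}{y_1}T^{u_2-u_1}+y_1y_2T^{u_1+u_2}+\,$(one more term). Matching against~\eqref{equ_diskpotentialfunguess}, the missing monomial $y_2T^{u_2}$ must appear with coefficient $2$, coming from the two toric disks through the node that merge. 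Concretely, $\beta=(\beta_0+\beta_1)/2$ is realized by two distinct product disks. Alternatively, the $n=2$ case is exactly \cite{Kim23}, up to the non-monotone extension handled by the toric identification.

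\emph{Reduction.} For $n\ge3$ and small $t$, I would work on the partial degeneration $X_t$ near the stage $\mathcal{X}_{(4,t_4)}$-type fiber $\{x_0^2+x_1^2+x_2^2+t_3x_3^2=0\}\subseteq\CP^{n+1}$. This fiber admits the torus action $G=T^{n-2}$ rescaling $x_4,\dots,x_{n+1}$, whose moment components are $\Psi_4-\Psi_3,\dots$ (combinations of the $\Psi_k$). I would reduce at the level determined by $\mathbf{u}$, i.e.\ fixing $u_4,\dots,u_n$ relative to $u_3$. The $G$-action is free on that level because $\mathbf{u}$ is interior. The quotient is $\{x_0^2+x_1^2+x_2^2+t_3x_3^2=0\}\subseteq\CP^3$, which is $\mathcal{Q}^2$ with a rescaled form, and $L^n(\mathbf{u})/G$ is a GZ fiber there.

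By Lemma~\ref{lemma_nonintersection}, every disk in the class $\beta$ with boundary $(0,1,0,\dots,0)$ avoids $V(x_0)\cap\dots\cap V(x_3)$ and $V(x_4\cdots x_{n+1})$. These are precisely where the unstable locus lies, so $\beta$ is semistable. The relevant moduli space is compact, regular and submersive for small $t$, by openness from the toric model. Corollary~\ref{cor_countinginv}, combined with Proposition~\ref{proposition_coprodmapsto}, then gives that $m$ equals the coefficient of the corresponding monomial $z_2T^{u_2}$ in $\mathcal{Q}^2$, which is $2$ by the base case. I would check that the monomial $y_2$ maps to $z_2$ under $\ker(q_*)$, and this is clear since $\theta_1,\theta_2$ descend.

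\emph{Main obstacle.} The hardest step is the reduction. It needs a precise identification of the unstable locus for the chosen level, showing it is contained in $(V(x_0)\cap\dots\cap V(x_3))\cup V(x_4\cdots x_{n+1})$. It also needs justification that the intermediate (non-GZ) complex structure on $X_t$ along $\gamma$ can be taken $G$-invariant, so that Theorem~\ref{theorem_fundacycle} applies. I would first try to handle both using Kempf--Ness for the explicit weights ($x_k\mapsto e^{i\theta}x_k$), together with Gromov compactness to move between $X_t$ and the $G$-invariant fiber while keeping the count fixed.
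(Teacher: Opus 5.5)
\textbf{Gap: the base case $n=2$.} Your argument for $n=2$ does not work. This matters because your reduction lands on GZ fibers $L^2(\mathbf{v})$, $\mathbf{v}=(u_1,u_2)$, of a rescaled $\mathcal{Q}^2$, and these are in general not monotone.

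Your identification of $\Phi_{\mathrm{GZ}}$ on $\mathcal{Q}^2\simeq\CP^1\times\CP^1$ with a toric moment map is false. The image $\Delta^2=\{2\ge u_2\ge |u_1|\}$ is a triangle whose vertex at the origin is not Delzant: the inward normals $(1,1)$ and $(-1,1)$ span an index-two sublattice. The fiber over that vertex is a Lagrangian $2$-sphere, and no integral affine map carries $\Delta^2$ to a square.

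The interior fibers are not product tori either. The monotone fiber $\mathbf{u}=(0,1)$ has potential $T(y_2^{-1}+y_2y_1^{-1}+2y_2+y_1y_2)$. It has five Maslov-two disks through a generic point and two critical points. By contrast, $a+a^{-1}+b+b^{-1}$ has four disks and four critical points, and a monomial change of variables preserves both numbers. So no rewriting of a product-torus potential yields \eqref{equ_diskq2}. Your ``(one more term)'' with coefficient $2$ is exactly what your argument fails to produce.

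The heuristic $\beta=(\beta_0+\beta_1)/2$ is also unavailable here. That decomposition comes from Lemma~\ref{lemma_nonintersection}, which needs $n\ge 3$. For $n=2$, $\pi_2(\mathcal{Q}^2,L)\simeq\Z^4$ has only three basic classes. The coefficient $2$ is $n_\alpha+n_{\alpha'}$ for two \emph{distinct} classes $\alpha,\alpha'$, which differ by the Lagrangian sphere class.

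Falling back on the monotone computation of Kim23 does not help, since it only covers $\mathbf{u}=(0,1)$. What you need is a genuine computation of the $n=2$ potential at all interior points. Examples are Auroux's wall-crossing computation for these Chekanov-type tori, or Fukaya--Oh--Ohta--Ono's degeneration argument for $S^2\times S^2$. The paper simply quotes this result as \eqref{equ_diskq2}.

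\textbf{The reduction step.} Granting that input, your reduction is the paper's argument. It uses the same $G=(S^1)^{n-2}$ and the same passage to the $G$-invariant cone $\{x_0^2+x_1^2+x_2^2+t_3x_3^2=0\}$; note this is $\mathcal{X}_{(3,t_3)}$, not $\mathcal{X}_{(4,t_4)}$. It uses Lemma~\ref{lemma_nonintersection} for semistability, and Corollary~\ref{cor_countinginv} with Proposition~\ref{proposition_coprodmapsto} to get $m=n_{\widetilde\alpha}+n_{\widetilde\alpha'}=2$.

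Your two ``obstacles'' are resolved as in the paper:
\begin{itemize}
\item At any level in the interior of the moment simplex of $G$, the unstable locus is $(V(x_0)\cap\dots\cap V(x_3))\cup V(x_4\cdots x_{n+1})$. This holds because the level must lie in the convex hull of the weights of the nonvanishing coordinates.
\item $G$-invariance is obtained by computing on $\mathcal{X}_{(3,t_3)}$ itself and transporting the counts back along $\gamma'$ by openness of regularity.
\end{itemize}

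The one genuine difference is in how you reach arbitrary $\mathbf{u}$. You reduce at the level adapted to $\mathbf{u}$, landing on a rescaled $\mathcal{Q}^2$. The paper reduces only at $(2,\dots,n-1)$ and reaches other $\mathbf{u}$ via the map $\eta$ to $(\C^*)^{n-2}$ and a cobordism between levels. Your shortcut is legitimate and simpler. $G$ acts freely, with the same unstable locus, on every interior level. Rescaling the K\"{a}hler form changes neither the holomorphic disks nor their counts, only the exponents of $T$.
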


\begin{proof} 
When $n = 2$, it is known (see \cite{AurouxTduality, FOOOS2S2} for instance) that the disk potential $L^{2}(\mathbf{u})$ in $\mathcal{Q}^{2} \simeq \CP^1 \times \CP^1$  is 
\begin{equation}\label{equ_diskq2}
W_{L^2(\mathbf{u})} (\mathbf{y}) = \frac{1}{y_2} T^{2-u_2}+ \frac{y_2}{y_{1}} T^{u_2 - u_1} + 2y_2 T^{u_2}  + y_1 y_2 T^{u_1 + u_2}.
\end{equation}

Assume that $n \geq 3$. Let $G \coloneqq (S^1)^{n-2}$ and $\mathbb{G} \coloneqq (\C^*)^{n-2}$. To prove~\eqref{equ_WLn} for $\mathcal{Q}^n$, we reduce the computation to the case $n=2$ via a GIT quotient. Consider the $G$-action on $\CP^{n+1}$ given by
\begin{equation}\label{equ_Sn3action}
G \times \CP^{n+1} \to \CP^{n+1}, \quad  \mathbf{s} * \mathbf{x} = [x_0 : \dots : x_3 : s_4 x_4: \dots : s_{n+1} x_{n+1}]
\end{equation}
where $\mathbf{s} \coloneqq (s_4, \dots, s_{n+1}) \in G$. After a change of coordinates, the associated moment map can be taken to be $\mu_{G} (\mathbf{x}) = (\Psi_3(\mathbf{x}), \dots, \Psi_n(\mathbf{x}))$. The $G$-action is free on the level set $\mu_{G}^{-1}(2, \dots , n-1)$. 

We extend this action to a $\mathbb{G}$-action and consider the linearization on $\mathcal{L} = \mathcal{O}(n)$ determined by the character $\chi$ corresponding to $(2, \dots , n-1)$. The resulting GIT quotient is 
$$
\CP^{n+1}  \git_{\chi} (\C^*)^{n-2} \simeq \CP^3.
$$ 
The stable locus is 
$$
\CP^{n+1,s} =\CP^{n+1} \setminus \left( \bigl(V(x_0) \cap \dots \cap V(x_3) \bigr) \cup V(x_4 x_5 \dots x_{n+1}) \right)
$$
and the projection $q$  
\begin{equation}\label{equ_themorphismq}
q \colon \CP^{n+1,s} \to \CP^{3}, \quad  [x_0 \colon \dots \colon x_{n+1} ] \mapsto [x_0 \colon \dots \colon x_3]
\end{equation}
is a principal $\mathbb{G}$-bundle.

Since the quadric $\mathcal{Q}^n$ is \emph{not} invariant under the action~\eqref{equ_Sn3action}, we consider the following degeneration. Consider a piecewise linear path $\gamma^\prime \colon [0,1] \to \C^{n-1}$ (cf.~\eqref{equ_defgammapie}) as 
$$
(1, \dots, 1) \to (1, \dots, 1, \delta) \to (t_3, \delta^{n-2}, \dots, \delta) \to (t_3, \delta^{n-2}, \dots, \delta^{n-2}) \to (t_3, 0, \dots, 0)
$$ 
Using the gradient Hamiltonian flow, we transport the GZ system along $\gamma^\prime$ to obtain a completely integrable system over $(t_3, \delta^{n-2}, \dots, \delta^{n-2})$. Taking $\delta \to 0$, we obtain a family of completely integrable system on each point of the line segment $(t_3, \delta^{n-2}, \dots, \delta^{n-2})$, which converges to $\Phi_{(3, t_3)}$ in $\mathcal{X}_{(3, t_3)}$.

The reason for considering such a degeneration (rather than the toric degeneration in~\eqref{equ_toricdegenerations}) is to facilitate comparison of the holomorphic disks corresponding to $(0,1,0,\dots,0)$. We denote $X^\prime_t \coloneqq \mathcal{X}_{\gamma^\prime(t)}$. As in Theorem~\ref{theorem_NNU08}, there exists a commutative diagram
\begin{equation}\label{equ_toricdegenerations2}
\xymatrix{
{X}^\prime_t \ar^{\psi^\prime_t}[rr] \ar[rd]_{\Phi^\prime_t} & &X^\prime_0 = \mathcal{X}_{(3, t_3)}\ar[ld]^{\Phi_{(3, t_3)}} \\
 & \Delta^n &}
\end{equation} 

Let $\beta$ be a relative homotopy class realized by a Maslov index two holomorphic disk whose boundary  class corresponds to the lattice point $(0,1,0, \dots ,0)$. By Lemma~\ref{lemma_nonintersection}, for all sufficiently small $t_3$, every holomorphic disk in $\beta$ (if exists) is contained in the (semi)stable locus. In particular, the class $\beta$ is semistable in $X_{(t_3, 0)}$. 

Choose a point $\mathbf{u} =  (u_1, u_2, u_3 = 2, \dots, u_n = n-1) \in \mathrm{Int}(\Delta^n)$ lying in the level set $\mu_{G}^{-1}(2, \dots, n-1)$ and set $\mathbf{v} = (u_1, u_2)$. Let $L^n(\mathbf{u})$ denote the fiber of $\Phi_{(n, 0)}$ over $\mathbf{u}$ in $\mathcal{X}_{(3,t_3)}$. By~\eqref{equ_phiktk}, the quotient map sends $L^n(\mathbf{u})$ to $L^{2}(\mathbf{v}) \subseteq \mathcal{Q}^{2}$ . Recall from~\eqref{equ_basisH1} that
$$
H_1(L_n(\mathbf{u}); \Z) \simeq \Z \langle \theta_1, \dots, \theta_n  \rangle.
$$ 
Moreover, $q_* \colon \Z \langle \theta_1, \theta_{2} \rangle \to H_1(L_{2}(\mathbf{v});\Z)$ is an isomorphism and $\{ \theta_3, \dots, \theta_n \}$ generates the kernel of $q_*$. Consequently, $y_1$ and $y_2$ survive and all other $y_\bullet$ becomes $1$ under the quotient map $q_*$.

Recall that there are two homotopy classes ${\alpha}$ and $\alpha^\prime$ of Maslov index two whose boundary corresponds to $(0,1) \in \Z^2$. Let $\widetilde{\alpha}$ and $\widetilde{\alpha}^\prime$ are unique homotopy classes in $\pi_2(\mathcal{X}_{(3, t_3)}, L(\mathbf{u}))$ such that $q_*(\widetilde{\alpha}) = \alpha$ and $q_*(\widetilde{\alpha}^\prime) = \alpha^\prime$. By Lemma~\ref{lemma_qregular} and Corollary~\ref{cor_nbetanalpha}, $\widetilde{\alpha}$ and $\widetilde{\alpha}^\prime$ are also regular and the sum of their counting invariants is equal to $2$, that is,
$$
n_{\widetilde{\alpha}} + n_{\widetilde{\alpha}^\prime} = n_\alpha + n_{\alpha^\prime} = 2.
$$
Moreover, $\partial  \widetilde{\alpha}$ corresponds to $(0,1,0,\dots, 0)$ because the disk potential must have the form~\eqref{equ_diskpotentialfunguess}. By Lemma~\ref{lemma_nonintersection}, every holomorphic disk whose boundary class corresponding to $(0,1,0,\dots, 0)$ lies in the semistable locus. Therefore,
\begin{equation}\label{equ_unionalphaalphaprime}
\mathcal{M}_1(X^\prime_0, L(\mathbf{u}), J, \iota_* \widetilde{\alpha}) \cup \mathcal{M}_1(X^\prime_0, L(\mathbf{u}), J, \iota_* \widetilde{\alpha}^\prime)
\end{equation}
are the only moduli spaces of holomorphic disks of Maslov index two that contribute to $y_2T^{u_2}$. Hence, its coefficient is exactly two. Since the moduli spaces in~\eqref{equ_unionalphaalphaprime} are regular, the corresponding counting invariant remains the same for sufficiently small $t$. Hence, the disk potential of $L(\mathbf{u})$ in $X^\prime_t$ is of the form~\eqref{equ_WLn}.

We have determined the disk potential of $L(\mathbf{u})$ for $\mathbf{u} \in \mu_{G}^{-1}(2, \dots, n-1)$. It remains to compute the disk potential of $L(\mathbf{u})$ with arbitrary $\mathbf{u} \in \mathrm{Int}(\Delta_\mathrm{GZ})$. For $\varphi$ in~\eqref{equ_unionalphaalphaprime}, consider the composition
$$
\eta \circ \varphi \colon (\mathbb{D}, \partial \mathbb{D}) \to (X^\prime_0, L(\mathbf{u})) \to ((\C^*)^{n-2}, (S^1)^{n-2})
$$
where
$$
\eta( [x_0 : x_1 : \dots : x_{n+1}] ) \mapsto \left(\frac{x_3}{x_2}, \dots, \frac{x_{n+1}}{x_2} \right).
$$
The variety $X_0^\prime$ carries a Hamiltonian $T^{n-2}$-action generated by $(\Psi_4 - \Psi_3, \dots, \Psi_n - \Psi_{n-1}, - \Psi_n)$ and $(\C^*)^{n-2}$ carries the standard torus action.  The map $\eta$ is $T^{n-2}$-equivariant with respect to these actions. By Lemma~\ref{lemma_nonintersection}, $\varphi$ has neither zeros nor poles along the divisor $V(x_2 \dots x_{n+1})$. Thus $\eta \circ \varphi$ is a well-defined holomorphic map and it must be constant by the maximum modulus principle. Consequently, we obtain an orientation-preserving diffeomorphism
$$
\mathcal{M}_1(X^\prime_0, L(\mathbf{u}), J, \beta) \to \mathcal{M}_1(\mathcal{Q}^{2}, L(\mathbf{v}), q_* J, q_* \beta) \times T^{n-2}, \quad [(\varphi, z_0)] \mapsto \bigl([(q \circ \varphi,z_0)], \eta \circ \varphi \bigr)).
$$ 

Now consider another level $(\nu_3, \ldots, \nu_n)$ with $\nu_3 \le \cdots \le \nu_n$ and choose
$$
\mathbf{u}^\prime = (u_1, u_2, \nu_3, \cdots, \nu_{n}) \in \mu_G^{-1}(\nu_3, \cdots, \nu_{n}).
$$
By the same argument, we obtain an isomorphism
$$
\mathcal{M}_1(X^\prime_0, L(\mathbf{u}^\prime), J, \beta) \simeq \mathcal{M}_1(\mathcal{Q}^{2}, L(\mathbf{v}), q_* J, q_* \beta) \times \widetilde{T}^{n-2}, \quad [(\varphi, z_0)] \mapsto \bigl([(q \circ \varphi,z_0)], \eta \circ \varphi \bigr)).
$$
Let $\mathbf{u} = (u_1, u_2, 2, \dots, n-1)$. By taking the product of cylinders from $T^{n-2}$ to $\widetilde{T}^{n-2}$ inside $(\C^*)^{n-2}$, we obtain an orientation-preserving cobordism between $\mathcal{M}_1(X^\prime_0, L(\mathbf{u}), J, \beta)$ and $\mathcal{M}_1(X^\prime_0, L(\mathbf{u}^\prime), J, \beta)$. In particular, the corresponding counting invariants agree. Therefore, the disk potential of $L(\mathbf{u}^\prime)$ is also given by~\eqref{equ_WLn}, as claimed.
\end{proof}

\begin{remark}
Although $\pi_2(\mathcal{Q}^3, {L}(\mathbf{u})) \simeq \pi_2(X^\prime_0, L(\mathbf{u}))$ are isomorphic, there is a birth-and-death phenomenon of homotopy classes under compactification or removal of unstable locus.

On one hand, $\pi_2(\mathcal{Q}^3, {L}(\mathbf{u})) \simeq \Z^{4}$ is generated by the basic disk classes. Among them, the basic disk class corresponding to the term $1/y_3 T^{3-u_3}$ is unstable. The summand generated by this class disappears after passing to the semistable locus. 

On the other hand, $\pi_2(X^\prime_0, L(\mathbf{u})) \simeq \pi_2(\mathcal{Q}^2, {L}(\mathbf{v})) \simeq \Z^{4}$. The disk classes ${\alpha}$ and ${\alpha}^\prime$ define distinct elements in $\pi_2(\mathcal{Q}^2, {L}(\mathbf{v}))$ and similarly $\widetilde{\alpha}$ and $\widetilde{\alpha}^\prime$ define distinct elements in $\pi_2(X^\prime_0, L(\mathbf{u}))$. The classes ${\alpha}$ and ${\alpha}^\prime$ differ by the class generated by a Lagrangian $2$-sphere. However, after compactification, $\widetilde{\alpha}$ and $\widetilde{\alpha}^\prime$ become same. The reason is that the corresponding spherical class can be contracted through the Lagrangian $S^3$-fiber, cf. \cite[Example 7.7]{CK20}.
\end{remark}

\providecommand{\bysame}{\leavevmode\hbox to3em{\hrulefill}\thinspace}
\providecommand{\MR}{\relax\ifhmode\unskip\space\fi MR }
\providecommand{\MRhref}[2]{%
  \href{http://www.ams.org/mathscinet-getitem?mr=#1}{#2}
}
\providecommand{\href}[2]{#2}

\end{document}